\documentclass[12pt]{article}

\usepackage{setspace}
\usepackage{amsmath,amssymb,amsthm,amsfonts}

\usepackage[pdftex,xdvi]{graphicx}
\usepackage{epstopdf}
\usepackage{rotating}
\usepackage[colorlinks, citecolor=blue, urlcolor=blue, pdfborder={0 0 1}, citebordercolor={1 1 1}]{hyperref}
\usepackage{booktabs}
\usepackage{longtable}
\usepackage{array}
\usepackage{multirow}
\usepackage{wrapfig}
\usepackage{float}
\usepackage{colortbl}
\usepackage{pdflscape}
\usepackage{tabu}
\usepackage{threeparttable}
\usepackage{threeparttablex}
\usepackage[normalem]{ulem}
\usepackage{makecell}
\usepackage{xcolor}
\usepackage{subcaption}
\usepackage{morefloats}

\usepackage{natbib}

\usepackage{enumerate}

\numberwithin{equation}{section}

\theoremstyle{definition}

\newtheorem{remark}{Remark}

\theoremstyle{plain}
\newtheorem{theorem}{Theorem}[section]
\numberwithin{theorem}{section} 

  \newtheorem{proposition}[theorem]{Proposition}
\newtheorem{lemma}[theorem]{Lemma}
\newtheorem{definition}[theorem]{Definition}
\newtheorem{assumption}{Assumption}[section]
\newtheorem{corollary}{Corollary}[section]

\DeclareMathOperator{\rank}{rank}

\DeclareMathOperator{\tr}{tr}

\newcommand{\R}{\mathbb{R}}

\newcommand{\reals}{\mathcal{R}}

\renewcommand{\Pr}{{\mathrm{P}}}

\newcommand{\vhat}{\widehat{v}}

\newcommand{\E}{\mathbb{E}}

\newcommand{\lambdahat}{\widehat{\lambda}}
\newcommand{\Xihat}{\widehat{\Xi}}

\newcommand{\Vhat}{\widehat{V}}
\newcommand{\Lhat}{\widehat{L}}

\newcommand{\xihat}{\widehat{\xi}}

\usepackage[a4paper,margin=1in,textheight=9.5in]{geometry}

\usepackage[small,compact]{titlesec}

\def\spacingset#1{\renewcommand{\baselinestretch}%
{#1}\small\normalsize} \spacingset{1}
\allowdisplaybreaks

\begin{document}
\onehalfspacing
\title{\large\bf{Fixed-order PCA: Theory for Overestimated Factor Models
}}
\author{
Yuan Liao\\  \footnotesize Department of Economics \\ \footnotesize University  of Iowa\and
Xin Tong\\  \footnotesize Department of Mathematics \\ \footnotesize National  University  of Singapore
\and  Wanjie Wang \\  \footnotesize Department of Statistics\\ \footnotesize  National  University  of Singapore
\and  Dacheng Xiu \\  \footnotesize Booth School of Business\\ \footnotesize University of Chicago and NBER}

\date{ \vspace{.3in}  \today}
\maketitle
\begin{abstract} 
We develop asymptotic theory for principal component analysis (PCA) of a high-dimensional factor model in which the working dimension $R$ is fixed and only required to satisfy $R \ge r$, where $r$ is the true number of factors. Building on anisotropic local laws from random matrix theory, we show that the ``extra'' empirical eigencomponents beyond the $r$-th are asymptotically noise-governed, incoherent, and nearly orthogonal to the factor loadings. We introduce two rotations, an expanded $r\times R$ map $H'$ and a compressed $R\times r$ map $H^{+}$, and establish consistency of the estimated factors under both. As an application, we analyze a factor-augmented regression for treatment-effect inference and prove $\sqrt{T}$-asymptotic normality for every fixed $R \ge r$. These results provide a theoretical underpinning for the common empirical practice of adopting a conservative upper bound on the number of factors, and shift the analytical burden from consistent dimension selection to the milder requirement of bounding $r$ from above.

\vspace{.1in}

\noindent {\bf Key words:}  factor model, fixed-$R$ PCA, treatment effect,   overestimated rank 
\end{abstract}

\newpage

%
 \onehalfspacing

\section{Introduction} \label{sec:intro}

Factor models provide a parsimonious description of high-dimensional data by decomposing an observation matrix $X \in \R^{N\times T}$ as
\begin{equation}\label{eq:intro-model}
X = B F' + U,
\end{equation}
where $B \in \R^{N\times r}$ is the loading matrix, $F \in \R^{T\times r}$ collects the latent factors, and $U \in \R^{N\times T}$ collect the idiosyncratic errors. Such models have become central in economics, finance, genomics, and signal processing, and principal component analysis (PCA) is by far the most widely used estimator. The asymptotic theory developed by  {\cite{connor1986performance,bai03}} and others typically rests on a critical prerequisite: the investigator must first consistently estimate the true number of factors $r$. Under strong-factor assumptions and clear eigenvalue gaps, a variety of methods, such as the information-criteria-based procedure \citep{BN02} and the eigenvalue-ratio test \citep{AH}, can consistently recover $r$. In practice, however, the signal is rarely so clean. In such settings, these methods can be unstable and may fail to detect weak factors while also selecting spurious ones. As a result, downstream inference may be adversely affected, particularly when the number of factors is underestimated.

Motivated by this fragility, 
empirical researchers often proceed by selecting a working number of factors 
$R$ that is intended to exceed the true 
$r$, and then base estimation and inference on this potentially over-specified model. Variants of this approach appear widely in the applied literature, with $R$ typically chosen by rule of thumb or guided by domain knowledge. Despite its prevalence, the statistical properties of this practice remain largely uncharacterized. This paper provides a theoretical foundation for such over-specification.

\subsection*{Contributions}
We develop a unified spectral theory for \emph{fixed-order} PCA, wherein the working dimension $R$ is an arbitrary, user-specified integer satisfying $R \ge r$. Our contributions are threefold.

\emph{(1) Spectrum and eigenvectors beyond the true factor dimension.} Building on anisotropic local laws from random matrix theory \citep{knowles2017anisotropic}, we characterize the empirical singular values $\lambda_{r+1}(X), \ldots, \lambda_{R}(X)$ and their associated singular vectors. We show that the ``extra'' singular values track those of the noise matrix $U$ up to a sharp remainder, and that the overestimated singular vectors are incoherent (also known as delocalized) with respect to any direction independent of $U$ and are near-orthogonal to the factor space. {Our random-matrix-based analysis achieves a strictly sharper alignment rate between the overestimated principal-component directions and the factor loadings than the standard Davis--Kahan / $\sin\Theta$ benchmark.} These facts provide the technical basis for the rest of the paper.

\emph{(2) Estimation of the full factor space.} Because the fitted factor matrix $\widehat F$ has $R$ columns rather than $r$, the usual notion of a rotation matrix must be extended. We introduce two rotations, an \emph{expanded} $r \times R$ rotation $H$ and a \emph{compressed} $R \times r$ generalized inverse $H^{+}$, and establish convergence rates for $\widehat F - FH'$ and $\widehat F H^{+} - F$. The former is faster because it only requires $F$ to lie in the column space of $\widehat F$, whereas the latter must further extract each of the $r$ factor directions from the $R$-dimensional fitted space.

\emph{(3) Robust factor-augmented inference.} As an illustrative application, we study treatment-effect estimation in a factor-augmented regression, where the number of latent confounders is unknown and consistent selection of $r$ is not invoked. We show that $\sqrt{T}$-asymptotic normality of the resulting estimator persists for every fixed $R \ge r$ when $r\ge 1$. Adding extra principal components may increase residual variation but does not affect the signal at first order, because the extra components are asymptotically orthogonal to the factor space and, since $R - r$ is fixed, their inclusion inflates the variance by {an asymptotically negligible factor of $1+O(1/T)$}.

These results imply that, within the asymptotic regime considered here, overestimating $r$ by a bounded amount is {asymptotically harmless}, whereas underestimation can have first-order consequences. Fixed-order PCA therefore provides a conservative and transparent procedure.

\subsection*{Related Literature}

\emph{Robustness to overspecification of the factor number.} The consequences of overspecifying factor dimension have been examined in several settings. \cite{MW11} showed that inference in interactive-effects models remains valid when the number of factors is overspecified, using perturbation theory for linear operators \citep{Kato95}. They established conditions under which the inclusion of overestimated eigenvectors does not affect the first order behavior of the parameter of interest.   Their operator-perturbation route is more flexible across estimator families, while the random-matrix-based analysis that we adopt in this paper specializes to PCA but yields sharper rates when applicable (e.g., Theorem~\ref{thm:main}(iii)).
In addition, we allow the factors to be much weaker in the sense that the top singular values of $X$ can grow much slower than $NT$, which is a   strictly broader regime than the strong-factor scaling employed in \cite{MW11}. 
 \cite{barigozzi2020consistent} established consistency of the low-rank component under overestimation through a trimmed PCA estimator that enforces incoherence by post-processing. \cite{fan2022learning} and \cite{choi2023inference} propose diversified-projection estimators that are robust to overspecification; their approach requires prespecified weights that are independent of the idiosyncratic noise, typically obtained from external characteristics or a held-out sub-sample. Our analysis shows that, under the setting of~\eqref{eq:intro-model} together with the incoherence conditions of \cite{knowles2017anisotropic}, vanilla PCA is itself robust to overspecification, with no trimming, weighting, or external information required; the incoherence that those papers manufacture by post-processing or by external weights is here a generic property of noise-side singular vectors, supplied by the local law.

\emph{Entrywise eigenvector perturbation.}  A parallel methodological line \citep{fan2018ell, cape2019two, abbe2020entrywise,fan2021recent} develops entrywise (sup-norm) perturbation bounds for the leading singular subspace of low-rank, incoherent signal matrices, sharpening the Davis--Kahan $\sin\Theta$ rate by exploiting incoherence of the signal.   Importantly, their results only apply to the  first $r$ eigenvectors. In contrast,  the setting $R>r$ requires a different apparatus: the empirical eigencomponents that govern overspecification lie inside the noise bulk, where no spectral gap is available, so the deterministic perturbation arguments developed in this literature for the leading subspace do not directly apply. We draw instead on the anisotropic local law of \cite{knowles2017anisotropic}, which supplies the analogous entrywise control of noise singular vectors in this no-gap regime; this random-matrix-based control is the natural counterpart of the entrywise perturbation bounds derived in the cited works, and is what enables our finer analysis of the extra components.

\emph{Spectral methods with weak or mixed signals.} More broadly, our results complement a growing literature on the spectrum of factor and spiked-covariance models with weak or mixed signal strengths \citep{onatski2010determining, onatski2012asymptotics, wang2017asymptotics, freyaldenhoven2022factor, uematsu2022estimation, bai2023approximate, giglio2023prediction}. 
Recently, \cite{bykhovskaya2025weak}   developed a unified random-matrix framework for inference on factor strength based on the top $r$ sample eigenvalues associated with the factors, uniformly across strong, weak, and critical regime. Our theoretical results are developed under a   regime  in which the signal strength is required only to diverge with the sample size and is not constrained to the strong-factor scaling of \cite{bai03}; this places our analysis squarely within the asymptotic setting considered by these works and makes the resulting spectral characterizations directly relevant to the inferential and prediction problems they study. Relative to that literature, we provide a single framework that simultaneously handles signal and noise eigencomponents in an overestimated spectrum, with explicit rates that readily plug into other factor-model inference problems. 
Our approach takes a different route than the factor-number selection
 \cite{BN02,onatski2010determining, ahn2013panel}: rather than asking when these procedures recover $r$ exactly, we ask what inference can be performed given an arbitrary upper bound, which is the practically relevant object once the analyst recognizes that a sharp $\widehat r$ is unavailable. 

\emph{Connection to double machine learning.} The factor-augmented inferential application we develop in Section~\ref{secapp} sits within the partialling-out /  {double machine learning (DML)} tradition \citep{belloni2014inference, chernozhukov2016double, hansen2018fac}, in which a low-dimensional treatment effect is estimated from a Neyman-orthogonal moment after residualizing both the outcome and the treatment with respect to high-dimensional nuisance components. Two features distinguish our setting. First, the nuisance is the \emph{latent} factor space spanned by $F$, recovered by PCA from the panel $X$ rather than  {constructed by selecting or fitting a function of observed controls; the two nuisances of canonical DML --- the conditional expectations of the outcome and the treatment given the controls --- share a common latent factor space in our setting, so a single PCA step suffices to residualize both equations, and the ``double'' structure of DML survives in the residualization rather than in the ML estimation step}; the standard  {DML} ``product of nuisance rates'' condition is replaced by  {a factor-strength condition stated explicitly in Assumption~\ref{ass4.1}(ii)}. Second,  {in the factor-augmented regression of Section~\ref{secapp}, a structural independence assumption between the regression errors and the factor--noise pair $(F,U)$} supplies the conditional independence that cross-fitting is engineered to deliver in the i.i.d.\ setting; consequently, neither sample splitting nor data-driven tuning of the working dimension $R$ is required, and any fixed $R\ge r$ with $R-r$ bounded yields valid $\sqrt{T}$ inference.

\subsection*{Organization and Notation}

The remainder of the paper is organized as follows. Section~\ref{secpca} introduces the model, fixed-order PCA, and our main spectral results. Section~\ref{secapp} applies these results to inference on a treatment coefficient in a factor-augmented regression. Section~\ref{secsim} reports simulation evidence. Section~\ref{secemp} presents an empirical application. Section~\ref{secconcl} concludes. All proofs are collected in the appendices.

Throughout the paper, $\lambda_k(A)$ denotes the $k$-th largest singular value of a matrix $A$, $\|A\|$ its spectral norm, and $\|A\|_{\mathrm{F}}$ its Frobenius norm. For a full-column-rank matrix $A$, $P_A = A(A'A)^{-1}A'$; for an arbitrary matrix, $A^{+}$ denotes its Moore--Penrose pseudoinverse and $P_A = A A^{+}$. We write $a_n \asymp b_n$ if $c\,b_n \le a_n \le C\,b_n$ for some $0 < c \le C < \infty$, and $a_n \ll b_n$ (equivalently, $a_n = o(b_n)$) if $a_n/b_n \to 0$. 
We write $\Phi(T) \prec \phi(T)$ (\emph{stochastic domination up to subpolynomial factors}) if $\Pr\{\Phi(T) > T^{\varepsilon}\, \phi(T)\} \to 0$ for every fixed $\varepsilon>0$; equivalently, $\Phi(T)=O_P(T^{\varepsilon}\phi(T))$ for every fixed $\varepsilon>0$. Thus $\prec$ agrees with $O_P$ up to a factor of size $T^{o(1)}$, and in particular absorbs powers of $\log T$. By itself, $\Phi\prec\phi$ with $\phi\to0$ need not imply $\Phi=o_P(1)$. Accordingly, we infer $\Phi=o_P(1)$ from a bound $\Phi\prec\phi$ only when an explicit rate condition guarantees $T^\varepsilon\phi(T)\to0$ for some $\varepsilon>0$. We use $c$ and $C$ for generic positive constants whose values may change across displays, and $\varepsilon$ for an arbitrarily small positive constant.

\section{Fixed-Order PCA: Model and Theory}\label{secpca}

\subsection{Factor Model Setup}\label{secmodel}

We consider the static approximate factor model in which, for each time period $t = 1, \ldots, T$, we observe an $N \times 1$ vector  {$x_t$} satisfying
\[
 {x_t} = B f_t + u_t.
\]
Here $B$ is an $N \times r$ matrix of factor loadings, $f_t$ is an $r \times 1$ vector of latent common factors, and $u_t$ is an $N \times 1$ vector of idiosyncratic components. We assume throughout that the factor process $\{f_t\}_{t\le T}$ and the idiosyncratic process $\{u_t\}_{t\le T}$ are mutually independent and that both have mean zero. 

 Let  {$X = (x_1, \ldots, x_T)$}, $F = (f_1, \ldots, f_T)'$, and $U = (u_1, \ldots, u_T)$. Then $X$ admits the matrix representation
\[
X = M + U, \qquad M = B F',
\]
where $M \in \R^{N \times T}$ is the low-rank signal matrix and $U \in \R^{N \times T}$ collects idiosyncratic errors.

We model idiosyncratic components as $u_t = \Sigma_e^{1/2} e_t$, where $e_t$ is an $N \times 1$ vector of standardized shocks and $\Sigma_e$ is an $N \times N$ positive-definite matrix. In matrix form,
\[
U = \Sigma_e^{1/2} E, \qquad E = (e_1, \ldots, e_T),
\]
and we denote by $\sigma_1, \ldots, \sigma_N$ the eigenvalues of $\Sigma_e$. {The matrix $\Sigma_e$ is allowed to be a general positive definite matrix subject to the spectral regularity conditions in Assumption~\ref{aspt:D0}(ii) below: 
 {its eigenvalues are bounded  and the associated deformed Marchenko--Pastur law is regular.}
Together these conditions accommodate cross-sectional heteroskedasticity and weak cross-sectional dependence in the idiosyncratic components, but rule out approximate factor-like structures within $u_t$ that would create outlying spikes in the spectrum of $\Sigma_e$; the latter would correspond to additional, undetected factors and is best modelled by enlarging $r$.}

 {Two features of this setup deserve emphasis. First,   $B$ is treated as deterministic and $F$ as random throughout; statements such as ``$\max_{i \le N}\|b_i\|=O(1)$'' below should be read as deterministic bounds uniform in $N$. The factors, in contrast, are random and possibly unbounded: Assumption~\ref{aspt:X}(i) below imposes $\max_{t\le T}\|f_t\|=O_P(\log T)$, which accommodates any factor process whose coordinates have uniformly subexponential tails (see the discussion following Assumption~\ref{aspt:X}).
 Second, all dependence in $U$ is channeled through the deterministic operator $\Sigma_e^{1/2}$ acting on entries $e_{i,t}$ that are independent across both $i$ and $t$ (Assumption~\ref{aspt:D0}(i)). This is the standard random-matrix setting in which the anisotropic local law of \cite{knowles2017anisotropic} applies, but it is more restrictive than the classical approximate-factor-model framework of \cite{bai03}, which permits weak \emph{serial} as well as cross-sectional dependence in $u_t$.}  {We emphasize this trade-off explicitly:  on one hand, both cross-sectional and serial independence are required by the local-law we use to characterize the spectral theory of the extra eigenvectors. On the other hand,  a more general HAC-type extension to weakly serially-dependent $u_t$ is the most empirically pressing direction for future work, which we list accordingly in Section~\ref{secconcl}. The applications for which our setup is best suited are therefore those in which independence across $(i,t)$ is plausible by sampling design --- repeated cross-sections, randomized rollouts, large-panel snapshots in survey data. Meanwhile, in this paper  the factor process $f_t$ is indeed allowed to be serially dependent. }  

Because $B$ and $F$ are identified only up to a rotation, we fix a canonical normalization tied to the singular vectors of $M$. Let $M = \Xi_r L_r V_r'$ denote the SVD of $BF'$, write $S_B = N^{-1}B'B$ and $S_f = T^{-1}F'F$, and let $H_B,H_F\in\R^{r\times r}$ be the rotation matrices that simultaneously diagonalize $S_B^{1/2}S_f S_B^{1/2}$ and $S_f^{1/2} S_B S_f^{1/2}$, normalized so that
\begin{equation}\label{identBF}
\frac{1}{\sqrt{N}}\,B \;=\; \Xi_r\,H_B^{-1},\qquad \frac{1}{\sqrt{T}}\,F \;=\; V_r\,H_F^{-1},\qquad (H_F'H_B)^{-1} \;=\; (NT)^{-1/2}\,L_r.
\end{equation}
The construction of $H_B$ and $H_F$ from $(S_B,S_f)$ and the verification of~\eqref{identBF} are entirely algebraic; the explicit formulas and computation are deferred to Section~\ref{secrotation}. The identities in~\eqref{identBF} are the only consequences used in the body of the paper.

\subsection{Fixed-Order PCA}\label{secfpca}

Throughout, the true number of latent factors $r = \dim(f_t) \ge 0$ is assumed to be fixed and bounded. When factors are not strong (the singular values of $M$ grow slower than $\sqrt{NT}$), consistently estimating $r$ typically requires stringent technical conditions and performs poorly in finite samples; information criteria and eigenvalue-ratio methods are well documented to under- or over-estimate $r$ in finite sample, depending on signal strength.

To avoid this instability, we adopt \emph{fixed-order PCA}. Rather than attempting consistent recovery of $r$, the statistician specifies an integer $R$ (the \emph{working number of factors}) and imposes only
\[
R \ge r.
\]
The estimator is then constructed from the first $R$ empirical singular components of $X$, regardless of whether $R$ matches the true factor dimension.

{The requirement $R\ge r$ is still substantive: one needs a credible upper bound on the factor dimension, supplied for example by a conservative screen from information criteria, or by scientific constraints on the number of latent channels. When no such upper bound is available, fixed-order PCA should be viewed as a sensitivity analysis over a small grid of plausible $R$'s rather than as a replacement for factor-number learning.}

{From a machine-learning perspective, the working dimension $R$ is the tuning parameter of fixed-order PCA. The main results of this section can therefore be read as a robustness-to-tuning-parameter statement: valid inference holds for any fixed $R$ within a range bounded below by the unknown $r$ and bounded above by a constant, without data-driven optimization of $R$.} 

Write the singular value decomposition of $X$ as
\[
X = \widehat \Xi_R \widehat L_R \widehat V_R' + \widehat \Xi_{-R} \widehat L_{-R} \widehat V_{-R}',
\]
where $\widehat \Xi_R \in \R^{N \times R}$, $\widehat V_R \in \R^{T \times R}$, and $\widehat L_R \in \R^{R \times R}$ collect the top-$R$ left singular vectors, right singular vectors, and singular values, and $(\widehat \Xi_{-R}, \widehat L_{-R}, \widehat V_{-R})$ collect the remaining components. The PCA estimators of loadings and factors are
\[
\widehat B = \sqrt{N}\,\widehat \Xi_R, \qquad \widehat F = \frac{1}{\sqrt{N}}\, X' \widehat \Xi_R = \frac{1}{\sqrt{N}}\,\widehat V_R \widehat L_R,
\]
and the low-rank component is estimated by singular-value thresholding,
\[
\widehat M = \widehat B \widehat F' = \widehat \Xi_R \widehat L_R \widehat V_R' = X \widehat V_R \widehat V_R'.
\]
In particular, $\widehat S_f := T^{-1} \widehat F'\widehat F = (TN)^{-1} \widehat L_R^{2}$; since the top $R$ singular values of $X$ are positive almost surely under Assumption~\ref{aspt:D0},   $\widehat S_f$ is invertible (almost surely).

A central contribution of this paper is to show that fixed-order PCA still consistently recovers the factor space spanned by $B$ even when $R$ strictly exceeds $r$. The additional empirical eigencomponents (those beyond the true factor dimension) do not contaminate the recovery of the true factor space; instead, they converge to well-characterized noise-governed directions. The leading $r$ components of fixed-order PCA asymptotically reconstruct the true factor space, while the remaining $R-r$ components behave in a controlled and predictable manner.

\subsection{Asymptotic Behavior of the Extra Eigencomponents}\label{secextra}

The main objective of this section is to establish the consistency of PCA when $R > r$. We partition the empirical eigencomponents as
\[
\widehat \Xi_R = (\widehat\Xi_r, \widehat\Xi_{-r}), \qquad \widehat V_R = (\widehat V_r, \widehat V_{-r}),
\qquad \widehat L_R = \mathrm{diag}\bigl(\widehat L_r, \widehat L_{-r}\bigr),
\]
where $\widehat\Xi_r$ denotes the leading $r$ left singular vectors, the usual \emph{spiked} eigenvectors, and $\widehat\Xi_{-r}$ collects the remaining $R-r$ eigenvectors, which we refer to as the \emph{extra} (or overestimated) eigenvectors. When $R = r$, we set $\widehat\Xi_{-r} = \emptyset$. The analogous decomposition applies to $\widehat V_R$ and to the singular values: writing $\widehat\lambda_l:=\lambda_l(X)$, the block $\widehat L_{-r}=\mathrm{diag}(\widehat\lambda_{r+1},\ldots,\widehat\lambda_R)$ collects the extra singular values.

The behavior of the extra eigenvectors is the key obstacle: $\widehat\Xi_{-r}$ carries no factor signal, and its asymptotic effect depends delicately on the noise $U$.  For PCA-based inference to remain valid under overspecification, $\widehat\Xi_{-r}$ must be incoherent, in the sense that its entries spread approximately uniformly across coordinates rather than concentrating on a few positions; coherent eigenvectors (also known as localized eigenvectors), in contrast, would interact spuriously with deterministic directions of interest. We establish this incoherency for $\widehat\Xi_{-r}$ as Theorem~\ref{thm:main}(ii) below.

\begin{assumption}\label{aspt:D0}
The idiosyncratic shocks and their cross-sectional covariance satisfy:
\begin{enumerate}
\item[(i)] \text The entries $e_{i,t}$ of $e_t$ are independent random variables with $\E[e_{i,t}] = 0$ and $\E[e_{i,t}^2] = 1$. In addition, they are uniformly subexponential: for a constant $C$ independent of $(i,t,N,T)$,
\[
\sup_{i,t}\|e_{i,t}\|_{\psi_1}\le C,\qquad
\|e_{i,t}\|_{\psi_1}:=\inf\{K>0: \mathbb E\exp(|e_{i,t}|/K)\leq 2\}.
\]


\item[(ii)] 
The eigenvalues of $\Sigma_e$ lie in $[c,C]$ for constants $0<c\le C<\infty$ that do not depend on $N$, and their associate deformed Marchenko--Pastur (MP) has \textit{regular edges and bulks}, as defined in Definition \ref{def:regularity} in Appendix \ref{sec:mp}.
 \end{enumerate}
\end{assumption}

We require the noise be subexponential-tailed, as defined in \cite{vershynin2020high}. Condition (ii)  allows us to develop a local-law of the spectrum of $X$ for  factor models. 
The MP law   of $U$ can be  determined using its Stieltjes transform. Assuming it has regular edges and bulks,  we show that  the analysis of $X$ can proceed  by leveraging the
eigenvalue rigidity, edge spacing, and anisotropic incoherence properties of $U$ established
in \cite{knowles2017anisotropic}. In part (ii) of this assumption,  the required regularity on  edges and bulks for the   eigenvalues  of $\Sigma_e$ are standard,  and   we defer the detailed definition to Definition \ref{def:regularity}.  
Here are two examples to satisfy this condition. Let the ordered eigenvalues 
 of  $\Sigma_e$ be $\sigma_1\geq \sigma_2\geq...\geq\sigma_N$, and their 
empirical spectral distribution  be 
$\pi = \frac{1}{N}\sum_{i=1}^N \delta_{\sigma_i}$, where $\delta_{\sigma_i}$ denotes the   Dirac measure at $\sigma_i$.
 
\begin{enumerate}
    \item \textbf{Discrete limit.} The measure $\pi$ is supported on $n$ fixed values
    $\{s_k\}_{k=1}^{n}$, and $\pi(s_k)$ converges to a limit as $T \to \infty$.
    \item \textbf{Continuous limit.} There exists a measure $\pi_\infty$ whose support is an interval $[a, b]$ with $0<a\le b<\infty$, and whose density $p_\infty$ is bounded above and below on it: $\kappa \le p_\infty(x) \le \kappa^{-1}$ for all $x \in [a,b]$ and some constant $\kappa > 0$; and $\pi$ converges
    weakly to $\pi_\infty$ as $T \to \infty$. 
\end{enumerate}
Therefore, Assumption~\ref{aspt:D0}(ii)  is satisfied in standard examples: the homoskedastic case $\Sigma_e=I\sigma$, sparse perturbations of the identity, and $\Sigma_e$ with bounded condition number whose limiting spectral measure has positive density.


\begin{assumption}\label{aspt:X}
The loadings, factors, and signal strength satisfy:
\begin{enumerate}
\item[(i)]  {The loadings $B$ are deterministic with row norms uniformly bounded in $N$, and the factors $F$ are random with row norms bounded in probability up to a logarithmic factor:}
\[
\max_{i \le N} \|b_i\|  {\le C}, \qquad \max_{t \le T} \|f_t\| = O_P(\log T).
\]
\item[(ii)] $N/T \to \phi$ for some $\phi \in (0, \infty)$ with $\phi \ne 1$.
\item[(iii)] There exists a sequence $\nu_M \to \infty$ with $\nu_M = O(\sqrt{N})$ such that, with probability approaching one as $T\to\infty$,
\[
c \nu_M \sqrt{T} \le \lambda_r(M) < \cdots < \lambda_1(M) \le C \nu_M \sqrt{T},
\]
and
\[
c \le \lambda_r(S_f) \le \cdots \le \lambda_1(S_f) \le C, \qquad S_f := \frac{1}{T} F'F.
\]
\end{enumerate}
\end{assumption}

The $O_P(\log T)$-bound in Part~(i) holds whenever the coordinates of $f_t$ have uniformly bounded subexponential norms, since the maximum of $T$ subexponential random variables is $O_P(\log T)$; Gaussian factors satisfy the stronger bound $O_P(\sqrt{\log T})$. Because $F$ is independent of the noise $E$, the random-matrix estimates in the proofs are applied conditionally on $F$, on the event that the bounds in parts~(i) and~(iii) hold (see the preliminaries in Appendix~\ref{sec:prelim}); the logarithmic factor from part~(i) is absorbed by every bound stated in terms of stochastic domination $\prec$, since $\log T = o(T^{\varepsilon})$ for every $\varepsilon>0$, and surfaces explicitly only in the entrywise bounds underlying the inference theory of Section~\ref{secapp} (see Assumption~\ref{ass4.1}(ii) and Step~4 of the proof of Theorem~\ref{thinf}).

The restriction $\phi \ne 1$ is inherited from \cite{knowles2017anisotropic}: the anisotropic local law guarantees incoherence of the singular vectors of $U$ only when the aspect ratio is bounded away from the MP edge.  {We do not claim that the conclusions fail at or near $\phi=1$; rather, the available local-law input used here does not provide the required singular-vector incoherence there.} To our knowledge, the extent to which this is a proof artifact versus a genuine phase transition for incoherence remains open. Recent work on the MP edge in the regime $\phi\to 1$ (see, e.g., the line of research developed by L.\ Erd\H{o}s and collaborators on rigidity at the soft and hard edges) studied the rate at which $\phi$ converges to one, and suggested  that the natural fluctuation scale changes from the bulk $T^{-1/2}$ to an edge scale of order $T^{-2/3}$; whether this scale is compatible with the entrywise bounds we require for Theorem~\ref{thm:main} is an interesting question for future work, and would be the natural route to closing the $\phi\ne 1$ gap.

Part~(iii) is the factor-strength condition: equivalently, $$\lambda_k(B'B) \asymp \nu_M^{2}, \quad  k = 1, \ldots, r.$$  The sequence $\nu_M$ indexes the signal strength and governs the sharpness of all subsequent results. At one extreme, $\nu_M \asymp \sqrt{N}$ recovers the strong-factor setting of \cite{bai03}; at the other, the results remain informative as long as $\nu_M \to \infty$, which is considerably weaker.

 {The role of $\nu_M$ across the main results is as follows. Separation of the first $r$ singular values from the noise requires only $\nu_M\to\infty$. To turn the subpolynomial-loss bounds $\prec\nu_M^{-1}$ and $\prec\nu_M^{-2}$ into genuine incoherence and near-orthogonality, Theorem~\ref{thm:main} imposes the mild additional requirement that $T^{\tau}=o(\nu_M)$ for some constant $\tau>0$, i.e., that $\nu_M$ diverge at least at some polynomial rate; the same condition supports the factor-space recovery of Theorem~\ref{thm:factor}. Furthermore, the inference theorem strengthens this to the product-of-rates condition $\sqrt{T}\log T=o(\nu_M^{2})$ in Assumption~\ref{ass4.1}(ii), the analogue of the DML rate condition.}

The factor-strength part of Assumption~\ref{aspt:X} is imposed only when $r\ge 1$. Meanwhile we allow a special case  $r=0$, that is,  there is no factor present   so   $X=U$.  In this case, 
 statements about extra components reduce to the corresponding noise-only local-law statements, which we state in Corollary~\ref{coro:rzero} below. This case is interesting in   applications  where statisticians are concerned about the  impact of confounding factors but are not sure whether they are present.  Therefore, allowing $r=0$ as a special case makes the PCA-based inference be also robust to whether confounding factors are present.

The following theorem is the analytical foundation of the paper. It shows that the extra spectrum closely resembles the spectrum of the noise matrix, and that the extra eigenvectors are incoherent and nearly orthogonal to the factor directions, with rates expressed through the stochastic-domination notation $\prec$ defined at the end of Section~1. The theorem is stated for $r\ge 1$, the setting of direct interest for factor-model inference, whereas the  case $r=0$ is recorded as Corollary~\ref{coro:rzero} below.

\begin{theorem}[Extra spectrum, $R > r$]\label{thm:main}
Under Assumptions~\ref{aspt:D0} and~\ref{aspt:X} with $r\ge 1$, suppose additionally that $T^\tau=o(\nu_M)$ for some constant $\tau>0$. Then the bounds in (i)--(iii) below hold uniformly in $k \in \{1, \ldots, R-r\}$.
\begin{enumerate}
\item[(i)] \emph{Singular values.} For any $R > r \ge 1$ and $k = 1, \ldots, R-r$,
\[
\lambda_k(U)^2 \ge \lambda_{r+k}(X)^2, \qquad \lambda_k(U)^2 - \lambda_{r+k}(X)^2 \prec \nu_M^{-2}\, T.
\]
\item[(ii)] \emph{Incoherence of extra eigenvectors.} For any sequence of unit vectors $\{\eta_i, \zeta_t: i\leq N, t\leq T\}$, which are independent of $U$,  $\|\eta_i\|=1$ and $\|\zeta_t\|=1$, $i=1...N$, $t=1,...,T$,  
\[
\max_{i\leq N}\|\eta_i' \widehat\Xi_{-r}\| \prec \nu_M^{-1}, \qquad \max_{t\leq T}\|\zeta_t' \widehat V_{-r}\| \prec \nu_M^{-1}.
\]
\item[(iii)] \emph{Near-orthogonality with factors.}  
\[
\|B\|_{\mathrm{F}}^{-1}\,\| B' \widehat\Xi_{-r}\| \prec \nu_M^{-2}, \qquad \|F\|_{\mathrm{F}}^{-1}\,\|F' \widehat V_{-r}\| \prec \nu_M^{-2}.
\]
\end{enumerate}
\end{theorem}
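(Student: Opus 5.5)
The plan is to work throughout conditionally on $F$, on the event where Assumption~\ref{aspt:X}(i),(iii) hold, so that $M=\Xi_rL_rV_r'$ is a fixed rank-$r$ perturbation of $U=\Sigma_e^{1/2}E$. We then treat $X=U+M$ by the standard linearization of finite-rank deformations. For $z$ outside the spectrum of $UU'$, Woodbury/Schur-complement identities reduce the spectral problem for $XX'$ to an $r\times r$ (or $2r\times 2r$) determinantal equation. Its entries are quadratic forms of the resolvents of $UU'$ and $U'U$ (and of $U'(UU'-z)^{-1}$) evaluated at the deterministic unit directions $\Xi_r$ and $V_r$. The anisotropic local law of \cite{knowles2017anisotropic}, available under Assumption~\ref{aspt:D0} and $\phi\ne1$, replaces these forms by deterministic multiples of the deformed MP Stieltjes transform, up to errors $\prec T^{-1/2}$ times the resolvent scale. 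Eigenvalue rigidity and edge regularity for $U$ come from the same source.

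\emph{Part (i).} The inequality $\lambda_k(U)\ge\lambda_{r+k}(X)$ is Weyl's interlacing for a rank-$r$ perturbation; more precisely, for $\lambda_{r+k}(X)$ we use the Cauchy interlacing obtained by projecting out the rank-$r$ column space of $M$. One approach is to write $\lambda_{r+k}(X)\le\lambda_k((I-P)X)=\lambda_k((I-P)U)\le\lambda_k(U)$ with $P=\Xi_r\Xi_r'$. The reverse inequality should follow from the secular equation. Near the edge of the bulk, the eigenvalues of $XX'$ below the $r$ outliers solve $\det\bigl(L_r^{-2}+\Xi_r'G(z)\Xi_r\cdots\bigr)=0$, where $L_r^{-2}\asymp \nu_M^{-2}T^{-1}$. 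Comparing with the poles of $G(z)=(UU'-z)^{-1}$ via an eigenvector expansion, we get $\lambda_k(U)^2-\lambda_{r+k}(X)^2\lesssim \|L_r^{-1}\|^{-2}$ times a weight. That weight is controlled by $\sum_j|\Xi_r'\xi_j(U)|^2/(\lambda_j^2-z)$, and it is the overlap $|\Xi_r'\xi_k(U)|^2\prec T^{-1}$ (delocalization of $U$'s eigenvectors) together with $\nu_M^2T$ size of $L_r^2$ that yields the $\nu_M^{-2}T$ shift. An alternative that may be cleaner is to write $\lambda_{r+k}(X)^2\ge \lambda_k\bigl(U'(I-P_{\widehat\Xi_r})U\bigr)$, then control $\|P_{\widehat\Xi_r}U\xi\|$ using part (iii)-type bounds for the spiked vectors.

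\emph{Parts (ii)--(iii).} For an extra singular pair $(\widehat\xi,\widehat v,\widehat\lambda)$ with index $r+k$, the eigen-equation gives
\[
(UU'-\widehat\lambda^2)\widehat\xi=-(MU'+UM'+MM')\widehat\xi .
\]
Solving through the resolvent at $z=\widehat\lambda^2$ is not possible directly, since $z$ sits at an eigenvalue of $UU'$ up to $O(\nu_M^{-2}T)$. I would therefore use spectral-domain smoothing: express $\eta'\widehat\xi\widehat\xi'\eta$ as a contour integral of $\eta'(XX'-z)^{-1}\eta$ around $\widehat\lambda^2$ at distance $T^{\varepsilon}\times$ (local spacing $\asymp T^{1/3}$ near the edge). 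Then expand $(XX'-z)^{-1}$ by Woodbury into $G(z)$ plus a rank-$2r$ correction involving $\Xi_r'G\eta$, $V_r'\mathcal G\,\cdot$, and the inverse of the small matrix $L_r^{-2}+\Xi_r'G\Xi_r$. The $G$ term contributes the delocalized $O_\prec(T^{-1})$. In the correction, $\|(L_r^{-2}+\cdot)^{-1}\|$ and the local-law bounds combine so that the projection onto the factor directions carries a factor $\nu_M^{-1}$ for a generic $\eta$. For $\eta\in\operatorname{span}(\Xi_r)$ — i.e., $B'\widehat\Xi_{-r}/\|B\|_F$, using $B=\sqrt N\Xi_rH_B^{-1}$ — an extra cancellation gives $\nu_M^{-2}$. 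Here we use that $\Xi_r'G\Xi_r$ multiplies $L_r^{-2}$ and the residue of the leading term cancels. The same argument on $X'X$ gives the $\zeta_t$ and $F$ statements. Uniformity over the $N$ (resp.\ $T$) directions and over $k\le R-r$ follows because $\prec$ bounds from the local law hold simultaneously over polynomially many deterministic vectors. Conditioning on $\{\eta_i\}$ handles vectors that are random but independent of $U$. Finally, $T^\tau=o(\nu_M)$ is needed so that $\nu_M^{-1}$ dominates the $T^{-1/2}$-type local-law errors after conversion.

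\emph{Main obstacle.} The hard step is the eigenvector part of (ii)--(iii). The extra eigenvalues are not outliers, so there is no gap and the contour cannot be isolated from $U$'s spectrum. I would first try the edge-regime approach of \cite{knowles2017anisotropic}'s outlier/non-outlier eigenvector theorem (Bloemendal--Knowles--Yau–Yin type), with the spike strengths formally sent to $\infty$ at rate $\nu_M$. This route goes via eigenvalue rigidity and level repulsion near $\lambda_k(U)$ for fixed $k$, together with an averaged rather than single-pole contour. The resulting non-outlier eigenvector bound has the form $|\langle v,\widehat\xi\rangle|^2\prec T^{-1}+(\text{spike})^{-2}$, which should give exactly the claimed $\nu_M^{-1}$ and, on the spike directions, $\nu_M^{-2}$.
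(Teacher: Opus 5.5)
Your route (Woodbury linearization of $XX'$ around $(UU'-z)^{-1}$, the isotropic local law, and a Bloemendal--Knowles--Yau--Yin-type non-outlier eigenvector estimate) is genuinely different from the paper's. If completed, it could plausibly give sharper rates. But it has a genuine gap exactly where you place the main obstacle, and that step is the entire content of (ii)--(iii).

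\emph{The borrowed theorem does not apply.} Non-outlier eigenvector theorems of that type are proved for multiplicative spikes $\Sigma^{1/2}Z$, with $\Sigma$ a finite-rank perturbation of the identity. Here, conditionally on a possibly serially dependent, non-Gaussian $F$, you have an additive deformation $M+\Sigma_e^{1/2}E$ with general $\Sigma_e$. ``Formally sending the spike strength to infinity'' in such a theorem is not a proof step; the estimate would have to be derived from scratch.

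\emph{The contour device fails as described.} A closed contour at distance $T^{1/3+\varepsilon}$ around $\widehat\lambda_{r+k}^2$ encloses on the order of $T^{3\varepsilon/2}$ eigenvalues. It must also cross the real axis inside the bulk edge, where, with no gap, $(XX'-z)^{-1}$ is uncontrolled. A workable resolvent version would bound $|\eta'\widehat\xi_k|^2\le \mathrm{Im}\,z\cdot\mathrm{Im}\,\eta'(XX'-z)^{-1}\eta$ at $z=\widehat\lambda_{r+k}^2+i\,\mathrm{Im}\,z$, then expand by Woodbury using a local law valid uniformly down to the edge scale. None of this is carried out.

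\emph{Smaller issues.} Your lower bound in (i) is only a heuristic, and its bookkeeping ($\nu_M^2T$ times an overlap of order $T^{-1}$) does not produce $\nu_M^{-2}T$. Also, $T^\tau=o(\nu_M)$ is not what makes $\nu_M^{-1}$ dominate $T^{-1/2}$; that is automatic from $\nu_M=O(\sqrt N)$. In the paper it keeps the $T^{\varepsilon}\nu_M^{-2}$ near-orthonormality errors negligible in the induction over $k$.

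The missing idea is that the eigenvalue estimate (i) already forces the eigenvector bounds, with no resolvent of the bulk at all. The paper proves (i) by Courant--Fischer on $\mathrm{span}(\Xi_r,U_k)$, using only $\|U_k'\Xi_r\|,\|V_r'W_k\|\prec T^{-1/2}$.

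\emph{Part (iii).} Write $\widehat\xi_k=\Xi_rx_k+\Xi_cy_k$, with $Q_r=L_rV_r'+\Xi_r'U$ and $E_c=\Xi_c'U$. The first block row of the eigen-equation gives $x_k=-(Q_rQ_r'-\widehat\lambda_k^2I)^{-1}Q_rE_c'y_k$. This inverse lives on the signal block, where $Q_rQ_r'-\widehat\lambda_k^2I\succeq c\,\nu_M^2T\,I$, so a gap is available. Pairing the second row with $y_k$ gives $(CT\nu_M^2)^{-1}\|Q_rE_c'y_k\|^2\le y_k'E_cE_c'y_k-\widehat\lambda_k^2\|y_k\|^2$. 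The right side is $\prec\nu_M^{-2}T$ by (i), using Ky Fan and an induction for $k>r+1$. Hence $\|x_k\|\prec\nu_M^{-2}$.

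\emph{Part (ii).} Expand $\Xi_cy_k=\sum_i a_{k,i}u_i+\rho_k$ in the singular vectors of $U$, and set $A_i=(a_{r+1,i},\ldots,a_{R,i})'$. Nonnegativity of the left side above gives $\sum_i a_{k,i}^2\lambda_i(U)^2=y_k'E_cE_c'y_k\ge\widehat\lambda_k^2\|y_k\|^2$. Combine this with (i), near-orthonormality of the $y_k$, and the rigidity spacing $\lambda_{R-r}(U)^2-\lambda_i(U)^2\ge c\,i$ for $i>T^\varepsilon$. This yields $\sum_{i>T^\varepsilon} i\,\|A_i\|^2\prec\nu_M^{-2}T$, so Cauchy--Schwarz gives $\sum_{i>T^\varepsilon}\|A_i\|\prec\sqrt T\,\nu_M^{-1}$. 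Delocalization $|\eta'u_i|\prec T^{-1/2}$ then finishes; the first $T^\varepsilon$ terms and $\rho_k$ are handled directly.

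Only rigidity, edge spacing and delocalization of $U$ itself are used. That is why the paper never needs a local law for the deformed matrix $X$.
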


The theorem admits the following geometric interpretation. The signal matrix $M$ contributes $r$ singular values of order $\nu_M\sqrt{T}$, while the noise matrix $U$ has Marchenko--Pastur spectrum on the order of $\sqrt{T}$. When $\nu_M \gg 1$, the empirical singular spectrum of $X$ exhibits the so-called ``canonical outlier-plus-bulk pattern": $r$ spike outliers at scale $\nu_M\sqrt{T}$, well separated from a Marchenko--Pastur bulk concentrated at scale $\sqrt{T}$. They contain essentially all of the factor information in $X$. In addition, the next $R-r$ empirical singular vectors $\widehat\Xi_{-r}$ of $X$ lie in the orthogonal complement of this signal subspace and are therefore noise-driven; their geometry is governed by the local law for $U$. The contribution of the signal to these extra subspaces is a second-order effect, captured by the $\nu_M^{-2}$ rate, up to a subpolynomial factor, in Theorem~\ref{thm:main}(iii).

Part~(ii) of Theorem~\ref{thm:main} implies $\|\eta'\widehat\Xi_{-r}\| \overset{P}{\to} 0$ for every unit vector $\eta$ independent of $U$, at rate $\nu_M^{-1}$ up to a subpolynomial factor. Indeed, choose $\varepsilon\in(0,\tau)$ in the definition of $\prec$, so that $T^\varepsilon/\nu_M\to0$. This is precisely the incoherence condition: were $\widehat\Xi_{-r}$ sparsely concentrated, say at the first coordinate, then taking $\eta = (1, 0, \ldots, 0)'$ would produce $\eta' \widehat\Xi_{-r} = 1$, contradicting the conclusion. In other words, even though the extra eigenvectors carry no signal, they are diffuse rather than coherent.

This theorem leads to three statistical insights.  First, result (ii) imply the $\ell_{\infty}$ perturbation bounds for $R>r$, by setting $\eta$ and $\zeta$ as canonical basis vectors:
$$
\|\widehat\Xi_{-r}\|_{\infty}= \max_{k}\|\eta_k' \widehat\Xi_{-r}\| \prec \nu_M^{-1}
$$
where $\eta_k'=(0,..,0,1,0...)$, taking value one on its $k$ th element (similarly we have bounds for $\|\widehat V_{-r}\|_{\infty}$). 
 The related results on deterministic $\ell_\infty$ perturbation arguments of \cite{fan2018ell} and \cite{abbe2020entrywise}   only apply to the first $r$ eigenvectors, but not for $\widehat\Xi_{-r}$ or $\widehat V_{-r}$. The technical challenge in extending $R=r$ to $R>r$ is that the extra singular vectors correspond to singular values  without  clear separation from neighboring singular values, so the standard eigen-gap arguments do not apply to them.  Our Theorem~\ref{thm:main}(ii) leverages the    random-matrix local law to analyze the
  entrywise structure of the extra singular vectors.

Secondly,   the improvement from $\nu_M^{-1}$ in (ii)  to $\nu_M^{-2}$ in (iii)  is genuine: the extra eigenvectors are driven by $U$ and thus nearly orthogonal to the factor space. The gap between the signal and noise singular values is what drives the faster $\nu_M^{-2}$ rate up to a subpolynomial factor.
The rate is  also strictly sharper than what a Davis--Kahan / $\sin\Theta$ argument would deliver. The standard $\sin\Theta$ bound for a perturbed singular subspace yields only $O_P(\nu_M^{-1})$, with no improvement for the eigenvectors orthogonal to factors.  By contrast, an arbitrary $\eta$ in (ii) carries no a priori alignment with the signal block, so only the leading-order incoherence is available, which explains why the rate in (ii) is slower.

 Lastly,   each extra empirical singular vector, denoted by $\widehat\xi_k$ as the $k$th column of $\widehat \Xi_R$ for $k>r$,  admits the orthogonal decomposition
$$\widehat\xi_k = \Xi_r x_k + \Xi_c y_k,$$ where $\Xi_c\in\R^{N\times(N-r)}$ is an orthonormal completion of $\Xi_r$, so that $[\,\Xi_r,\ \Xi_c\,]$ is an $N\times N$ orthogonal matrix, and the coefficients are the projections $x_k=\Xi_r'\widehat\xi_k\in\R^r$ and $y_k=\Xi_c'\widehat\xi_k\in\R^{N-r}$. The signal-aligned coefficient $x_k$ shrinks at the strictly faster rate $\|x_k\|\prec\nu_M^{-2}$ (and hence is $o_P(1)$ under the polynomial-rate condition), while the noise-aligned coefficient $y_k$ inherits the entrywise incoherency of the underlying noise singular vectors; this is the geometric reason why over-estimation in PCA is asymptotically benign for inference.

\medskip 

As a statistical application, Theorem~\ref{thm:main} is especially useful for inference with over-estimated factors, as formalized below.

\begin{corollary}[$r \ge 1$]\label{coro2.1}
Suppose the assumptions of Theorem~\ref{thm:main} hold and $T^{\tau}=o(\nu_M^{2})$ for some constant $\tau>0$. Let $G_N$ and $G_T$ be any $N \times K$ and $T \times K$ matrices with $K = O(1)$, $\|G_N\|_{\mathrm{F}} + \|G_T\|_{\mathrm{F}} = O_P(\sqrt{T})$, and $G_N, G_T$ independent of $U$. Then
\[
\frac{1}{NT}\,\| \widehat B' U G_T\| + \frac{1}{NT}\,\| \widehat F' U' G_N\| \;\prec\; T^{-1/2}\, \nu_M^{-1}.
\]
\end{corollary}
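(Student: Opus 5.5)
We bound the two terms separately; by symmetry ($X'=F B'+U'$ has the same structure with the roles of $N$ and $T$ exchanged) it suffices to treat $(NT)^{-1}\|\widehat B' U G_T\|$. Since $\widehat B=\sqrt N\,\widehat\Xi_R$, this quantity equals $N^{-1/2}T^{-1}\|\widehat\Xi_R' U G_T\|$. We split $\widehat\Xi_R=(\widehat\Xi_r,\widehat\Xi_{-r})$ and handle the spiked block and the extra block separately. Throughout we condition on $F$ and $G_T$, which is legitimate because both are independent of $U$. With $\|G_T\|_{\mathrm F}=O_P(\sqrt T)$ and $K=O(1)$, each column $g_k$ of $G_T$ satisfies $\|g_k\|\lesssim\sqrt T$, so $Ug_k$ is, conditionally, a vector with $\E\|Ug_k\|^2\asymp N\|g_k\|^2$.

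\emph{Extra block.} The crude bound $\|\widehat\Xi_{-r}'UG_T\|\le\|U\|\,\|G_T\|\prec T$ only yields $T^{-1/2}$ after normalization, so we need a factor $\nu_M^{-1}$. We use the SVD relation $\widehat\Xi_{-r}'X=\widehat L_{-r}\widehat V_{-r}'$, which gives
\[
\widehat\Xi_{-r}'U=\widehat L_{-r}\widehat V_{-r}'-\widehat\Xi_{-r}'BF'.
\]
Hence
\[
\widehat\Xi_{-r}'UG_T=\widehat L_{-r}\widehat V_{-r}'G_T-\widehat\Xi_{-r}'BF'G_T.
\]
For the first term, Theorem~\ref{thm:main}(i) gives $\|\widehat L_{-r}\|\le\lambda_1(U)\prec\sqrt T$. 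Applying Theorem~\ref{thm:main}(ii) with $\zeta=g_k/\|g_k\|$, which is independent of $U$, gives $\|\widehat V_{-r}'g_k\|\prec\sqrt T\,\nu_M^{-1}$. So the first term is $\prec T\nu_M^{-1}$. For the second term, Theorem~\ref{thm:main}(iii) gives $\|B'\widehat\Xi_{-r}\|\prec\|B\|_{\mathrm F}\nu_M^{-2}\asymp\nu_M^{-1}$. Together with $\|F'G_T\|\le\|F\|\|G_T\|=O_P(T)$, the second term is also $\prec T\nu_M^{-1}$. Multiplying by $N^{-1/2}T^{-1}$ gives $\prec T^{-1/2}\nu_M^{-1}$, as desired.

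\emph{Spiked block.} Here we apply the same identity to the leading block: $\widehat\Xi_r'U=\widehat L_r\widehat V_r'-\widehat\Xi_r'BF'$. The two pieces are now individually large, of order $\nu_M T$, and must cancel, so a different route is needed. We instead use the standard first-order expansion of the leading singular vectors, with the gap $\nu_M\sqrt T\gg\sqrt T$ supplied by Assumption~\ref{aspt:X}(iii). This expansion reads $\widehat\Xi_r=\Xi_rO+\Delta$ for an orthogonal $O$, and the next step is to show $\|\Delta' UG_T\|\prec T\nu_M^{-1}$. The main term $\Xi_r'UG_T$ involves $\Xi_r$, which is deterministic given $F$, and $G_T$, which is independent of $U$. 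Since $U$ is independent of both, conditional second moments give $\|\Xi_r'UG_T\|=O_P(\sqrt T)$, which after normalization is $T^{-1}N^{-1/2}\sqrt T\ll T^{-1/2}\nu_M^{-1}$ because $\nu_M=O(\sqrt N)$. For the remainder, we write $\Delta\approx U V_r L_r^{-1}$ plus higher-order terms. Then $\Delta'UG_T\approx L_r^{-1}V_r'U'UG_T$, and $\|U'U\|\prec T$, $\|L_r^{-1}\|\asymp(\nu_M\sqrt T)^{-1}$, $\|G_T\|\lesssim\sqrt T$ together give $\prec T\nu_M^{-1}$. The higher-order terms are controlled by powers of $\|U\|/\lambda_r(M)\prec\nu_M^{-1}$ via a Neumann/resolvent expansion.

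The main obstacle is this spiked-block remainder. The exact expansion of $\widehat\Xi_r$ couples with $U$, so one cannot simply condition, and the remainder terms have to be bounded in operator norm using $\|U\|\prec\sqrt T$ without losing more than subpolynomial factors. The same rates transfer to $\widehat F' U' G_N=N^{-1/2}\widehat L_R\widehat V_R'U'G_N$: we use $\|\widehat L_r\|\prec\nu_M\sqrt T$ together with the analogous expansion for $\widehat V_r$, and the symmetric bounds for $\widehat V_{-r}$ from Theorem~\ref{thm:main}(ii)--(iii). The extra condition $T^\tau=o(\nu_M^2)$ ensures that the $\prec$ bounds inherited from Theorem~\ref{thm:main} combine consistently.
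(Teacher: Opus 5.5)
Your proposal is correct and is essentially the paper's proof. The extra block is handled exactly as in the paper, through $\widehat\Xi_{-r}'UG_T=\widehat L_{-r}\widehat V_{-r}'G_T-\widehat\Xi_{-r}'BF'G_T$ and Theorem~\ref{thm:main}(i)--(iii); the spiked block is handled by splitting off an anchor fixed given $F$ ($BH_r$ in Lemma~\ref{lem2H}, $\Xi_rO$ in yours), with the main term bounded by conditional second moments. The ``main obstacle'' you flag is not really one, and the first-order/Neumann expansion is unnecessary: the Davis--Kahan bound $\|\widehat\Xi_r-\Xi_rO\|\prec\nu_M^{-1}$ times $\|U\|\,\|G_T\|\prec T$ already gives $T\nu_M^{-1}$, which is what the paper does. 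On the factor side the extra factor $\|\widehat L_r\|\asymp\nu_M\sqrt T$ is harmless because $\nu_M\lesssim\sqrt T$.
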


To see that the rate in Corollary~\ref{coro2.1} is sharp, consider the strong-factor case $\nu_M = \sqrt{T}$. Then $(NT)^{-1}\|\widehat F' U' G_N\|  \prec T^{-1}$, which matches, up to a $T^{o(1)}$ factor, the rate as if the true factors were used: $(NT)^{-1}\|F' U' G_N\|= O_P(T^{-1})$.
So the price of replacing the population factor by its PCA estimator does not introduce extra first order  variance. Such a statistical corollary  plays a central role in Section~\ref{secapp} for factor-augmented inference.

\medskip 

Lastly, the corollary below specifies the special case that $r=0$:

\begin{corollary}[Boundary case $r=0$]\label{coro:rzero}
Suppose Assumption~\ref{aspt:D0} holds, $N/T\to\phi\ne 1$, and $r=0$, so that $X=U=\Sigma_e^{1/2}E$. Then for every bounded integer $R$ and any unit-norm vectors $\eta\in\R^N,\zeta\in\R^T$ independent of $U$,
\[
\|\eta'\widehat\Xi_R\|\;\prec\; T^{-1/2},\qquad \|\zeta'\widehat V_R\|\;\prec\;T^{-1/2}.
\]
\end{corollary}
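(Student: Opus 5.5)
This corollary is the pure-noise instance of anisotropic delocalization, so the plan is to invoke the anisotropic local law of \cite{knowles2017anisotropic} directly for the sample covariance $UU'=\Sigma_e^{1/2}EE'\Sigma_e^{1/2}$ (and its companion $U'U$), with no perturbation argument needed. Since $r=0$, $\widehat\Xi_R$ and $\widehat V_R$ are the top-$R$ left and right singular vectors of $U$, and since $R$ is bounded it suffices to show, for each fixed $k\le R$, that $|\eta'\widehat\xi_k|\prec T^{-1/2}$ and $|\zeta'\widehat v_k|\prec T^{-1/2}$. Summing the $R$ squared coordinates then gives the stated bounds. Independence of $\eta,\zeta$ from $U$ lets us condition on them and treat them as deterministic unit vectors. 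The probability bounds in the local law hold uniformly over deterministic vectors, so integrating over the conditioning preserves $\prec$.

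\textbf{Step 1: local law near the edge.} Let $G(z)=(UU'-z)^{-1}$. Under Assumption~\ref{aspt:D0}(ii), with regular edges and $\phi\neq 1$, the anisotropic local law gives
\[
|\eta'G(z)\eta-\eta'\Pi(z)\eta|\prec \sqrt{\operatorname{Im} m(z)/(T\operatorname{Im}z)}+1/(T\operatorname{Im}z),
\]
uniformly for $z=E+i\eta_0$ in a spectral domain containing the right edge, where $\Pi$ is the deterministic equivalent built from $\Sigma_e$ and the deformed MP Stieltjes transform $m$. Assumption~\ref{aspt:D0}(i) supplies only subexponential tails. Since Knowles--Yin assume finite moments of all orders, which subexponential variables satisfy, their hypotheses are met. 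I would also check that $\|\Pi(z)\|=O(1)$ near a regular edge, which follows from regularity.

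\textbf{Step 2: spectral decomposition.} Use $\operatorname{Im}\,\eta'G(z)\eta=\sum_j \eta_0|\eta'\widehat\xi_j|^2/((\mu_j-E)^2+\eta_0^2)$, where $\mu_j=\lambda_j(U)^2$. Choose $E=\mu_k$ and $\eta_0=T^{-1+\varepsilon}$ on the rescaled scale where the spectrum of $UU'/T$ is $O(1)$. Every term is nonnegative, so $|\eta'\widehat\xi_k|^2\le \eta_0\operatorname{Im}\,\eta'G(\mu_k+i\eta_0)\eta$. The local law bounds $\operatorname{Im}\,\eta'G\eta$ by $O(1)+T^{\varepsilon}$, which gives $|\eta'\widehat\xi_k|^2\prec T^{-1}$.

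\textbf{Step 3: the random spectral parameter.} $E=\mu_k$ is random, but the local law holds simultaneously over a fine net of $z$ with high probability. Eigenvalue rigidity places the top $R$ eigenvalues within $T^{-2/3+\varepsilon}$ of the right edge, inside the domain. A Lipschitz argument in $z$ therefore transfers the bound to $z=\mu_k+i\eta_0$.

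\textbf{Step 4: the $T$-side.} Apply the same argument to $U'U=E'\Sigma_eE$, whose nonzero spectrum coincides with that of $UU'$. Equivalently, apply the local law to the linearized $(N+T)\times(N+T)$ resolvent of \cite{knowles2017anisotropic}, whose off-diagonal and lower block controls $\zeta'\widehat v_k$. This yields $|\zeta'\widehat v_k|\prec T^{-1/2}$.

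The main obstacle is the uniformity in Steps 1 and 3: confirming that regular edges in the sense of Definition~\ref{def:regularity} put the top $R$ eigenvalues of $UU'$ inside the domain where the anisotropic law holds with the stated error, and handling the random spectral parameter. Knowles--Yin's rigidity plus the standard net argument should suffice. The $\phi\neq1$ assumption ensures that there is no hard edge at zero to complicate the companion matrix.
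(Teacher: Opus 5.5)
Your proposal is correct and follows the paper's route. With $r=0$ the singular vectors of $X$ are those of $U$, and the bound is the anisotropic delocalization of Proposition~\ref{prop:loallaw}(3) summed over the $R$ bounded columns; the only difference is that the paper cites that delocalization statement from \cite{knowles2017anisotropic} directly, whereas your Steps~1--3 re-derive it from the anisotropic local law via the spectral decomposition of $\operatorname{Im}\eta'G\eta$, edge rigidity, and a net argument (the standard proof of that statement), which also makes clear that only the regular upper edge is needed for the top $R$ vectors.
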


Each of the leading $R$ empirical singular vectors of $X$ is therefore incoherent with respect to any direction independent of $U$ at the standard noise-only rate. The bound follows from a more general result, which we state as   Proposition~\ref{prop:loallaw} in Appendix~\ref{sec:genthm}: when $r=0$, the empirical singular vectors of $X$ coincide with those of $U$, and no factor-strength condition is needed.

\subsection{Asymptotic Behavior of the Overestimated Factor Space}\label{secspace}

We next turn to estimation of the low-rank signal and the factor space.

\begin{theorem}[Low-rank recovery, $r \ge 1$]\label{thm:common}
Under the assumptions of Theorem~\ref{thm:main}, for each fixed  $R \ge r$ with $R-r$ bounded,
\[
\frac{1}{\sqrt{NT}}\,\| \widehat M - M\|_{\mathrm{F}} = O_P\bigl(T^{-1/2}\bigr).
\]
{The bound is  uniform over $R\in\{r,r+1,\ldots,\bar R\}$ for any fixed $\bar R\ge r$.}
\end{theorem}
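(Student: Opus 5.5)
The plan is to write $\widehat M = X\widehat V_R\widehat V_R' = P_{\widehat\Xi_R}X$ and split the error into a rank-$r$ part and the extra components:
\[
\widehat M - M = \bigl(\widehat\Xi_r\widehat L_r\widehat V_r' - M\bigr) + \widehat\Xi_{-r}\widehat L_{-r}\widehat V_{-r}'.
\]
Both terms have rank at most $2r$ and $R-r$ respectively, so $\|\cdot\|_{\mathrm F}\le\sqrt{\mathrm{rank}}\,\|\cdot\|$. It therefore suffices to show that each term has spectral norm $O_P(\sqrt{T})$. Since $\sqrt{NT}\asymp T$ under Assumption~\ref{aspt:X}(ii), this gives $(NT)^{-1/2}\|\widehat M-M\|_{\mathrm F}=O_P(T^{-1/2})$. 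Uniformity over $R\in\{r,\ldots,\bar R\}$ is then automatic, because there are finitely many values of $R$ and the second term is monotone in $R$.

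The extra term is immediate. Its spectral norm equals $\widehat\lambda_{r+1}\le\lambda_1(U)$, using Theorem~\ref{thm:main}(i) or simply Weyl's interlacing $\lambda_{r+1}(X)\le\lambda_1(M)\cdot 0+\lambda_1(U)$, since $\mathrm{rank}(M)=r$. Under Assumption~\ref{aspt:D0}, $\|U\|\le\|\Sigma_e^{1/2}\|\,\|E\|=O_P(\sqrt{N}+\sqrt{T})=O_P(\sqrt T)$ by standard subexponential random-matrix norm bounds, or by rigidity at the MP edge from \cite{knowles2017anisotropic}. This bound does not require $\widehat\Xi_{-r}$ to be orthogonal to the signal; in particular, no $\nu_M$ rate is needed here.

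The main term is the best rank-$r$ approximation $\widehat M_r$ of $X$, so it minimizes $\|X-A\|$ over all matrices of rank at most $r$. Hence $\|X-\widehat M_r\|=\widehat\lambda_{r+1}\le\|U\|$, and therefore
\[
\|\widehat M_r-M\|\le\|\widehat M_r-X\|+\|X-M\|\le 2\|U\|=O_P(\sqrt T).
\]
This is the classical Eckart--Young argument. The resulting matrix has rank at most $2r$, so its Frobenius norm is at most $2\sqrt{2r}\,\|U\|$. Combining the two pieces gives $\|\widehat M-M\|_{\mathrm F}\le(2\sqrt{2r}+\sqrt{R-r})\,\|U\|$.

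I expect no serious obstacle. The only substantive input is the operator-norm bound $\|E\|=O_P(\sqrt T)$ for subexponential entries. It follows from \cite{knowles2017anisotropic}'s edge rigidity, which gives $\lambda_1(U)^2\le C T$ with high probability, or alternatively from a truncation plus Bai--Yin/Latała-type argument. The factor-strength assumption enters only to make the statement meaningful, because the signal is then of larger order $\nu_M\sqrt T$; the bound itself holds even without it. If one prefers to reuse the paper's own machinery, the extra term can alternatively be bounded via Theorem~\ref{thm:main}(i).
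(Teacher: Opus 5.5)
Your argument is correct, and it takes a genuinely different and more elementary route than the paper.

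\textbf{The paper's route.} The paper splits $X\widehat V_R\widehat V_R'-M$ into four pieces: $M(\widehat V_r\widehat V_r'-V_rV_r')$, $U\widehat V_r\widehat V_r'$, $M\widehat V_{-r}\widehat V_{-r}'$ and $U\widehat V_{-r}\widehat V_{-r}'$. It controls the first piece by Davis--Kahan, $\|\widehat V_r-V_rO_r\|_{\mathrm F}=O_P(\nu_M^{-1})$, multiplied by $\|M\|\asymp\nu_M\sqrt T$. That step relies on the spectral gap coming from the factor-strength condition. The other three pieces are bounded crudely through $\|U\|$ and $\widehat\lambda_{r+1}\le\lambda_1(U)$.

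\textbf{Your route.} You replace the Davis--Kahan step by Eckart--Young optimality of the truncated SVD plus rank counting. This gives the deterministic inequality $\|\widehat M-M\|_{\mathrm F}\le(2\sqrt{2r}+\sqrt{R-r})\,\|U\|$, so the only probabilistic input is $\|U\|=O_P(\sqrt T)$. Consequently your argument needs no eigengap, no $\nu_M\to\infty$ or $T^{\tau}=o(\nu_M)$, and none of the local-law machinery behind Theorem~\ref{thm:main}. It also covers $r=0$ without the paper's separate remark. Uniformity over $R\le\bar R$ is immediate because the constant depends only on $\bar R$. In effect, you show that the theorem's hypotheses are stronger than this conclusion requires.

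\textbf{What each buys.} The paper's decomposition gives a term-by-term account of where the error sits. It isolates the signal-leakage piece $M\widehat V_{-r}\widehat V_{-r}'$, which Theorem~\ref{thm:main}(iii) sharpens to $\prec\nu_M^{-1}\sqrt T$. It also reuses the Davis--Kahan alignment that drives Theorem~\ref{thm:factor} and Proposition~\ref{prop:H-invert}. For the aggregate Frobenius rate stated here, however, yours is the cleaner and more general argument.

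One cosmetic slip: the Weyl step should read $\lambda_{r+1}(X)\le\lambda_{r+1}(M)+\lambda_1(U)=\lambda_1(U)$, not ``$\lambda_1(M)\cdot 0$''. Alternatively, Eckart--Young alone gives $\widehat\lambda_{r+1}=\min_{\mathrm{rank}(A)\le r}\|X-A\|\le\|X-M\|=\|U\|$.
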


Theorem~\ref{thm:common} shows that the low-rank component is recovered at the standard parametric rate, uniformly over any bounded working dimension $R \ge r$. Two features of this statement are worth noting. The rate $T^{-1/2}$ matches the optimal rate achievable by a hypothetical oracle estimator that knows $r$: overestimation by a bounded amount does not slow down the rate of convergence for low-rank recovery. The extra components $\widehat\Xi_{-r}\widehat L_{-r}\widehat V_{-r}'$ included in $\widehat M$ have Frobenius norm of order $\sqrt{T}$, but Theorem~\ref{thm:main}(iii) ensures that this contribution is aligned with the noise direction rather than the signal direction.  While alternative estimators based on hard-thresholding, soft-thresholding, or nuclear-norm minimization can also achieve parametric recovery, fixed-order PCA does so without any tuning parameter beyond the integer $R$ itself; this is useful in settings where cross-validation  is unstable or computationally expensive.

We now address estimation of the factor space itself. In the classical case $R = r$, PCA consistently recovers $F$ up to an invertible $r \times r$ rotation. When $R > r$, the notion of rotation must be generalized, and we introduce two natural extensions.

\textbf{Expanded rotation.} Define the $r \times R$ matrix
\[
H' = \frac{1}{N} B' \widehat B.
\]
The model $X = B F' + U$ immediately yields
\begin{equation}\label{eq2.1}
\widehat F = F H' + \frac{1}{N}\, U' \widehat B.
\end{equation}
Up to the statistical error $N^{-1} U' \widehat B$, the true factors $F$ are \emph{expanded} by $H'$ into the larger space spanned by $\widehat F$.

\textbf{Compressed rotation.} Let $H^{+} = (HH')^{+} H$ denote the $R \times r$ Moore--Penrose inverse of $H$. Post-multiplying~\eqref{eq2.1} by $H^{+}$ and using $H' H^{+} = I_r$, valid on the event $\{\lambda_r(H) > 0\}$ (which has probability tending to one by Proposition~\ref{prop:H-invert} below), gives
\begin{equation}\label{eq2.2}
\widehat F H^{+} = F + \frac{1}{N}\, U' \widehat B\, H^{+}.
\end{equation}
Thus $\widehat F$ is \emph{compressed} by $H^{+}$ back to an object of the true dimension.

 {The two rotations serve different purposes. The expanded rotation $H$ is appropriate when the goal is to align the columns of $\widehat F$ with $F$ \emph{without} forcing a dimension match; this formulation applies when $\widehat F$ enters downstream as a linear regressor, since regression projects onto a span and is invariant to the choice of basis within that span.
 Let $\mathrm{col}(F)$ denote the linear space spanned by the columns of $F$.
 The identity~\eqref{eq2.1} exhibits the geometric content of the expansion: the $r$-dimensional factor span $\mathrm{col}(F)$ is approximately contained in the $R$-dimensional fitted span $\mathrm{col}(\widehat F)$, with $H'$ mapping a basis of the former into the latter.

 The compressed rotation $H^{+}$ is appropriate when the goal is to identify the $r$ true factor directions \emph{within} the larger $R$-dimensional fitted space; this formulation applies when the downstream object is an $r$-column object explicitly referencing the true factor dimension rather than the working dimension $R$. The identity~\eqref{eq2.2} performs the inverse extraction of $\mathrm{col}(F)$ from inside $\mathrm{col}(\widehat F)$; 
 The two notions coincide when $R = r$, in which case both reduce to the standard $r\times r$ rotation matrix of the classical PCA literature.}

The next proposition records the non-degeneracy scale of $H$ uniformly over $R \ge r$.   We write $\nu_{\min}:=\lambda_r(S_B)$ when $r\ge 1$, so that $\nu_{\min}\asymp \nu_M^2/N$.

\begin{proposition}[Non-degeneracy of the rotation]\label{prop:H-invert}
Under the assumptions of Theorem~\ref{thm:main} with $r \ge 1$, there exists a constant $c_0 > 0$ depending only on $(c, C, \phi)$ such that, for every bounded $R \ge r$,
\[
\Pr\{\lambda_r(H) \ge c_0\nu_{\min}^{1/2}\} \to 1, \qquad \|H^{+}\| = O_P(\nu_{\min}^{-1/2}).
\]
\end{proposition}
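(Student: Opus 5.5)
The plan is to split $H$ into the block coming from the leading $r$ eigenvectors and the block coming from the $R-r$ extra eigenvectors, and to show that the first block alone carries the non-degeneracy.

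\textbf{Step 1 (block decomposition).} Since $\widehat B=\sqrt N\,\widehat\Xi_R=\sqrt N(\widehat\Xi_r,\widehat\Xi_{-r})$, we have
\[
H'=N^{-1/2}\bigl(B'\widehat\Xi_r,\;B'\widehat\Xi_{-r}\bigr)=:(H_1',H_2').
\]
Appending columns can only increase the smallest singular value, i.e. $\lambda_r(H)=\lambda_r(H')\ge \lambda_r(H_1')$, because $\|H'x\|\ge\|H_1'x_1\|$ for the corresponding restriction. So it suffices to bound $\lambda_r(H_1')$ from below. As a by-product, Theorem~\ref{thm:main}(iii) gives
\[
\|H_2\|\le N^{-1/2}\|B\|_{\mathrm F}\,\|B'\widehat\Xi_{-r}\|/\|B\|_{\mathrm F}\prec N^{-1/2}\nu_M\,\nu_M^{-2},
\]
which is negligible relative to $\nu_{\min}^{1/2}\asymp\nu_M/\sqrt N$. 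This is not needed for the lower bound, but it confirms that $H$ is essentially $(H_1,0)$.

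\textbf{Step 2 (the leading block).} Using~\eqref{identBF}, write $N^{-1/2}B=\Xi_rH_B^{-1}$, so that $H_1'=(H_B^{-1})'\,\Xi_r'\widehat\Xi_r$. Then
\[
\lambda_r(H_1')\ge \lambda_r(H_B^{-1})\,\lambda_r(\Xi_r'\widehat\Xi_r).
\]
The first factor equals $\lambda_r(N^{-1/2}B)=\lambda_r(S_B)^{1/2}=\nu_{\min}^{1/2}$, since $\Xi_r$ has orthonormal columns.

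For the second factor, apply Wedin's $\sin\Theta$ theorem to $X=M+U$. The gap is $\lambda_r(M)-\lambda_{r+1}(X)\ge c\nu_M\sqrt T-\|U\|$, and $\|U\|=O_P(\sqrt T)$ by Assumption~\ref{aspt:D0} (bounded spectrum of $\Sigma_e$ together with standard norm bounds for subexponential $E$). Hence $\|\sin\Theta(\widehat\Xi_r,\Xi_r)\|\le C\|U\|/(\nu_M\sqrt T)=O_P(\nu_M^{-1})=o_P(1)$. It follows that
\[
\lambda_r(\Xi_r'\widehat\Xi_r)=\sqrt{1-\|\sin\Theta\|^2}\ge 1/2
\]
with probability tending to one. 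We then take $c_0=1/2$; alternatively, one can cite the first-$r$ part of the local-law analysis underlying Theorem~\ref{thm:main}, which depends only on $(c,C,\phi)$.

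\textbf{Step 3 (pseudoinverse).} On the event $\{\lambda_r(H)>0\}$, $H$ has full row rank $r$. Hence $H^+=(HH')^{-1}H$ and $\|H^+\|=1/\lambda_r(H)\le c_0^{-1}\nu_{\min}^{-1/2}$, which gives the $O_P$ bound. Since $R$ ranges over finitely many values and the event in Step 2 does not depend on $R$ (it involves only $\widehat\Xi_r$), the statement holds for every bounded $R$, and even uniformly.

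The only delicate point is Step 2: ensuring that the eigen-gap $\lambda_r(M)-\lambda_{r+1}(X)$ is of order $\nu_M\sqrt T$. This needs $\|U\|=O_P(\sqrt T)$ together with Weyl's inequality, $\lambda_{r+1}(X)\le\|U\|$. Once that is in place, everything else is algebra built on~\eqref{identBF}.
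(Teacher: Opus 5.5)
Your proof is correct and follows essentially the paper's route. You factor $H'=(H_B^{-1})'\,[\Xi_r'\widehat\Xi_r,\ \Xi_r'\widehat\Xi_{-r}]$ via \eqref{identBF}, bound $\lambda_r(\Xi_r'\widehat\Xi_r)$ away from zero by Davis--Kahan/Wedin, and use $\|H^{+}\|=1/\lambda_r(H)$.

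The differences are minor. You skip bounding the extra block, since $H'H=H_1'H_1+H_2'H_2\succeq H_1'H_1$ gives $\lambda_r(H)\ge\lambda_r(H_1)$; the paper instead subtracts $\|\Xi_r'\widehat\Xi_{-r}\|=O_P(\nu_M^{-1/2})$ via Weyl's inequality. The identity $H'H=H_1'H_1+H_2'H_2$ is the right justification for your ``appending columns'' step; $\|H'x\|\ge\|H_1'x_1\|$ does not hold for general $x\in\R^R$. You also note $\lambda_r(H_B^{-1})=\nu_{\min}^{1/2}$ exactly, which yields the explicit constant $c_0=1/2$.
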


Under strong factors, $\nu_{\min}\asymp 1$, so Proposition~\ref{prop:H-invert} recovers the familiar bounded-inverse behavior $\|H^+\|=O_P(1)$. Under weak factors, the inverse rotation carries the additional scale $\nu_{\min}^{-1/2}$.

\begin{theorem}[Factor space, $r \ge 1$]\label{thm:factor}
Suppose the assumptions of Theorem~\ref{thm:main} hold and there exists a constant $\tau > 0$ with $T^{\tau} = o(\nu_M)$. Then for every bounded $R \ge r$:
\begin{enumerate}
\item[(i)] \emph{Expanded rotation.}
\[
\frac{1}{\sqrt{T}}\,\| \widehat F - F H'\| = O_P(T^{-1/2}), \qquad \frac{1}{T} \left\|  F' (\widehat F - F H') \right\| \;\prec\; T^{-1/2}\, \nu_M^{-1}.
\]
\item[(ii)] \emph{Compressed rotation.}
\[
\frac{1}{\sqrt{T}}\,\| \widehat F H^{+} - F\| = O_P(\nu_M^{-1}), \qquad \frac{1}{T}\,\| F'(\widehat F H^{+} - F)\| \;\prec\; \nu_M^{-2}.
\]
\item[(iii)] \emph{Inverse factor covariance.}
\[
H' \left(\frac{1}{T} \widehat F' \widehat F\right)^{-1} H = \left(\frac{1}{T} F' F\right)^{-1} + o_P(1).
\]
\end{enumerate}
\end{theorem}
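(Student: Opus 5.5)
The whole argument runs through the two exact identities \eqref{eq2.1}--\eqref{eq2.2}, $\widehat F-FH'=N^{-1}U'\widehat B$ and $\widehat FH^{+}-F=N^{-1}U'\widehat BH^{+}$. All four bounds then become bounds on $N^{-1}U'\widehat B=N^{-1/2}U'\widehat\Xi_R$ and on its projection onto $F$. Part (iii) will follow from these by algebra.

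\textbf{Part (i).} For the first claim, use $\|U\|\prec\sqrt T$ from the local law. In fact $\|U\|=O_P(\sqrt T)$ by the standard bound on the largest singular value, since $\|\Sigma_e\|\le C$. Because $\|\widehat\Xi_R\|=1$, this gives $T^{-1/2}\|N^{-1/2}U'\widehat\Xi_R\|\le T^{-1/2}N^{-1/2}\|U\|=O_P(T^{-1/2})$. For the second claim, bound $(NT)^{-1}\|F'U'\widehat B\|$. This is exactly the second term of Corollary~\ref{coro2.1}, with the roles of $\widehat F$ and $\widehat B$ swapped, applied with $G_T=F$. Note that $\|F\|_{\mathrm F}=O_P(\sqrt T)$ and $F$ is independent of $U$. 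The corollary gives the rate $T^{-1/2}\nu_M^{-1}$. Its rate condition $T^\tau=o(\nu_M^2)$ is implied by the assumed $T^\tau=o(\nu_M)$.

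\textbf{Part (ii).} Multiply the part (i) bounds by $\|H^{+}\|=O_P(\nu_{\min}^{-1/2})=O_P(\sqrt N/\nu_M)$, which is Proposition~\ref{prop:H-invert}. The first bound becomes $O_P(T^{-1/2}\sqrt N\nu_M^{-1})=O_P(\nu_M^{-1})$, since $N\asymp T$. The second becomes $\prec T^{-1/2}\nu_M^{-1}\cdot\sqrt N\nu_M^{-1}\asymp\nu_M^{-2}$.

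\textbf{Part (iii).} Write $\widehat S_f=T^{-1}\widehat F'\widehat F$ and $S_f=T^{-1}F'F$, and set $\Delta=\widehat F-FH'$. The main obstacle is that $\widehat S_f$ is $R\times R$ with $R-r$ small eigenvalues, of order $\widehat\lambda_{r+k}^2/(NT)\asymp 1/N$. So $\widehat S_f^{-1}$ is huge on those directions, and $H'\widehat S_f^{-1}H$ is controlled only if $H$ is nearly orthogonal to them.

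I will compute directly in the eigenbasis. Since $\widehat S_f=(NT)^{-1}\widehat L_R^2$ and $H'=N^{-1/2}B'\widehat\Xi_R$, we get
\[
H'\widehat S_f^{-1}H=T\,B'\widehat\Xi_R\widehat L_R^{-2}\widehat\Xi_R'B .
\]
Split this into the spiked block $\widehat\Xi_r$ and the extra block $\widehat\Xi_{-r}$. The extra block is at most
\[
T\,\|B'\widehat\Xi_{-r}\|^2\,\widehat\lambda_R^{-2}\prec T\cdot N\nu_M^{-4}\cdot\|B\|_{\mathrm F}^2/(NT)\cdot T^{0}\asymp \nu_M^{-2}.
\]
Here I use Theorem~\ref{thm:main}(iii), $\|B\|_{\mathrm F}^2\asymp\nu_M^2$, and a lower bound $\widehat\lambda_{R}^2\gtrsim T$. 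That lower bound comes from Theorem~\ref{thm:main}(i) and the MP rigidity of $\lambda_k(U)$ at a fixed index $k$, using $\phi\ne1$ so that the top of the bulk is of order $T$. Since $T^\tau=o(\nu_M)$, this block is $o_P(1)$.

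For the spiked block, $\widehat S_{f,r}=(NT)^{-1}\widehat L_r^2$ and $H_r'=N^{-1}B'\widehat B_r$ form the classical $R=r$ PCA problem. The standard argument applies: $\widehat F_r=FH_r'+N^{-1}U'\widehat B_r$, and the perturbation bounds from part (i) carry over to the first $r$ columns. These give
\[
\widehat S_{f,r}=H_rS_fH_r'+o_P(\nu_{\min}),
\]
with $H_r$ invertible at scale $\nu_{\min}^{1/2}$ by Proposition~\ref{prop:H-invert}, because $\lambda_r(H_r)\ge\lambda_r(H)-o_P$. Writing $H_r'(H_rS_fH_r'+E)^{-1}H_r=(S_f+H_r^{-1}EH_r'^{-1})^{-1}$ with $\|H_r^{-1}EH_r'^{-1}\|=o_P(1)$ yields $S_f^{-1}+o_P(1)$.

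The delicate point is to verify that the cross terms $F'\Delta$ and $\Delta'\Delta$, restricted to the first $r$ columns, are $o_P(\nu_{\min})$. Here $\Delta'\Delta/T=O_P(1/T)$ by part (i), and $1/T=o(\nu_M^2/N)$. The term $F'\Delta/T$ is $\prec T^{-1/2}\nu_M^{-1}$, and multiplying by $\|H_r\|\lesssim\nu_{\min}^{1/2}$ gives $o(\nu_{\min})$ after using $T^\tau=o(\nu_M)$.
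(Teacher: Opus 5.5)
Your proof is correct. Parts (i) and (ii) follow the paper's argument: the identities \eqref{eq2.1}--\eqref{eq2.2}, the bound $\|U\|=O_P(\sqrt T)$, Corollary~\ref{coro2.1} with $G_T=F$, and $\|H^{+}\|=O_P(\nu_{\min}^{-1/2})$. The term of the corollary you need is simply its first one, $\frac{1}{NT}\|\widehat B'UG_T\|$; no swapping of roles is involved.

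For (iii) you take a genuinely different route. The paper passes to singular-vector coordinates via~\eqref{identBF}, writing $(F'F)^{-1}=N(H_B^{-1})'L_r^{-2}H_B^{-1}$ and $H'(\widehat F'\widehat F)^{-1}H=N(H_B^{-1})'\Xi_r'\widehat\Xi_R\widehat L_R^{-2}\widehat\Xi_R'\Xi_rH_B^{-1}$. It then controls the spiked block through $\|\widehat\Xi_r\widehat L_r^{-2}\widehat\Xi_r'-\Xi_rL_r^{-2}\Xi_r'\|$ using Davis--Kahan and Weyl, obtaining an error of order $T^{\varepsilon}\nu_M^{-1}$. That route never needs any argument about sub-blocks of $H$.

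You instead stay at the level of the factors. You use the exact expansion $\widehat S_{f,r}=H_rS_fH_r'+E$, which comes from $\widehat F_r=FH_r'+\Delta_r$, together with the identity $H_r'(H_rS_fH_r'+E)^{-1}H_r=(S_f+H_r^{-1}E(H_r')^{-1})^{-1}$. This recycles part (i) directly. The one point to make precise is that the ``$o_P$'' in $\lambda_r(H_r)\ge\lambda_r(H)-o_P$ must be $o_P(\nu_{\min}^{1/2})$. It is, since $N^{-1/2}\|\widehat\Xi_{-r}'B\|\prec\nu_{\min}^{1/2}\nu_M^{-2}$; alternatively, it follows directly from $\lambda_r(\Xi_r'\widehat\Xi_r)\ge 1/2$ in the proof of Proposition~\ref{prop:H-invert}.

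Two further observations on your route. First, the step you call delicate is not. The crude bound $T^{-1}\|F'\Delta_r\|\le T^{-1}\|F\|\,\|\Delta_r\|=O_P(T^{-1/2})$ already gives $\nu_{\min}^{-1}\|H_r\|\,T^{-1}\|F'\Delta_r\|=O_P(\nu_M^{-1})$, so the spiked block needs only $\nu_M\to\infty$. Feeding in Corollary~\ref{coro2.1} instead upgrades the spiked-block error to $\prec\nu_M^{-2}$, sharper than the paper's rate. Second, in both routes the extra block is where Theorem~\ref{thm:main}(iii) is indispensable. The generic $\sin\Theta$ rate $\|\Xi_r'\widehat\Xi_{-r}\|=O_P(\nu_M^{-1})$ would only make $T\|B'\widehat\Xi_{-r}\|^2\widehat\lambda_R^{-2}$ bounded, not vanishing.
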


We use the $\nu_M$ form here for direct comparison with (ii) and  \cite{bai03}. When $\nu_M=\sqrt{T}$ (strong factors) both  $\frac{1}{\sqrt{T}}\,\| \widehat F - F H'\|$ and  $\frac{1}{\sqrt{T}}\,\| \widehat F H^{+} - F\| $ converge at  $T^{-1/2}$. When $\nu_M$ is slower than $\sqrt{T}$ (weaker factors), the compressed rate in (ii) is slower than the expanded rate in (i), illustrating that recovering the true factor space is a more difficult problem than aligning the true factor space in the over-estimated space. 


In addition, {Part~(iii) of Theorem~\ref{thm:factor} shows that the inverse estimated factor covariance, properly sandwiched by $H$, is consistent for the true inverse factor covariance regardless of the working dimension $R \ge r$. These inverse covariance matrices are often used for factor-augmented inference when estimated  factors are being used as regressors.
Hence,  result (iii) is the property on which the robust factor-augmented inference of the next section relies.}

Our normalization differs from the one adopted in \cite{bai2023approximate}, and the two conventions place the scale of the problem on opposite sides of the factor decomposition. We fix the loadings through $\widehat B = \sqrt{N}\,\widehat\Xi_R$, so that $N^{-1}\widehat B'\widehat B = I_R$ and the scale of $\widehat B$ is independent of the factor strength; the scale is instead carried by $\widehat F$, whose empirical second moment $T^{-1}\widehat F'\widehat F = (NT)^{-1}\widehat L_R^2$ has leading $r$ eigenvalues of order $\nu_M^2/N$ and $R-r$ extra eigenvalues of order $1/N$, smaller by the factor $\nu_M^2$. By contrast, \cite{bai2023approximate} fix the factors through $\widetilde F = \sqrt{T}\,\widetilde V$, the leading eigenvectors of $X'X$, so that $T^{-1}\widetilde F'\widetilde F = I$ and it is the loading estimate that absorbs the scale; their Proposition~1 establishes consistency of $\widetilde F$ for the rotated factor space under weaker loadings, with rates that deteriorate as the loadings weaken --- the same weak-factor margin that $\nu_M$ indexes here. The two conventions estimate the same column space and are related by an invertible rotation, so the distinction is only where the scale resides.

\section{Application to Treatment-Effect Inference}\label{secapp}

 {As an illustrative application of the spectral results in Section~\ref{secpca}, we consider inference on a treatment coefficient $\beta$ in the factor-augmented system}
\begin{eqnarray}\label{eq4.1}
y_t &=& \mu_y + \beta\,g_t + \rho'f_t + \eta_t,\nonumber\\
g_t &=& \mu_g + \alpha_g'f_t + \varepsilon_{g,t}, \\
z_t &=& \mu_z + \alpha_z'f_t + \varepsilon_{z,t},\nonumber
\end{eqnarray}
where $g_t$ is the treatment variable of interest. We consider the application where $g_t$ is correlated with $\eta_t$ (so that it is endogenous). Then we include an observed instrumental variable (IV) $z_t$, and $\mathbb E( \eta_t|\varepsilon_{z,t}, f_t)=0$.  The latent factors $f_t$ are not directly observed; instead we observe the high-dimensional panel of controls
\begin{equation}\label{eq4.2}
x_t \;=\; B f_t + u_t,
\end{equation}
exactly as in Section~\ref{secmodel}, so that $f_t$ can be recovered by PCA from $X$.  The intercepts $(\mu_y,\mu_g,\mu_z)$ are unrestricted nuisance parameters that absorb marginal means of $(y_t,g_t,z_t)$ at no asymptotic cost. The  innovations $(\eta_t,\varepsilon_{g,t})$ are allowed to be mutually correlated within a period.

 This setup places the application within the partialling-out / Frisch--Waugh--Lovell / Neyman-orthogonal residualized-regression tradition of DML \citep{chernozhukov2016double}, with the distinguishing feature that the high-dimensional nuisance is the latent factor space spanned by $F$, recovered by PCA from the panel $X$, rather than a function of observed controls. The new content of this section is to show that fixed-order PCA is a valid nuisance estimator, delivering $\sqrt{T}$-asymptotic normality with the unadjusted Eicker--White sandwich variance, without sample splitting or  data-driven selection of $R$.

 First, we apply PCA to $X$ to extract $R$ factors, whose $T\times R$ matrix is denoted by $\widehat F$,  as in Section~\ref{secfpca}.
Then  we use the  factor-augmented IV estimator of $\beta$, which is based on the residualized just-identified IV regression:
\begin{equation}\label{eq4.3}
\widehat\beta \;=\; \bigl(\widehat\varepsilon_z'\,\widehat\varepsilon_g\bigr)^{-1}\,\widehat\varepsilon_z'\,\widehat\varepsilon_y,
\end{equation}
with $\widehat\varepsilon_y = (I - P_{[1_T,\widehat F]}) Y$, $\widehat\varepsilon_g = (I - P_{[1_T,\widehat F]}) G$, and $\widehat\varepsilon_z = (I - P_{[1_T,\widehat F]}) Z$ denoting residuals from regressing the outcome  {$Y=(y_1,\ldots,y_T)'$}, the treatment  {$G=(g_1,\ldots,g_T)'$}, and the instrument  {$Z=(z_1,\ldots,z_T)'$} on a constant and the PCA factor estimator. The projection on $[1_T,\widehat F]$ absorbs the unobserved intercepts together with the latent-factor nuisance $\rho'f_t$, and $\widehat\beta$ is the second-stage IV slope on the residualized variables.

The key innovation of our estimator is that  our analysis does not require consistent estimation of the true factor number; it suffices that $R\geq r$ so that the span of $\widehat F$ cover the span of $F$. The key structural assumption is that $(\eta_t,\varepsilon_{g,t},\varepsilon_{z,t})$ is i.i.d.\ across $t$ and independent of $(F,U)$, formalized in Assumption~\ref{ass4.1}(i) below. Under i.i.d.\ errors, the autocovariances of the score $\varepsilon_{z,t}\eta_t$ vanish, so the long-run variance reduces to the contemporaneous expectation and the heteroskedasticity-only Eicker--White (HC$_0$) sandwich is the natural variance estimator; the analysis allows arbitrary contemporaneous heteroskedasticity in the score, but not serial dependence or factor-driven conditional variance dynamics.

\begin{remark}[OLS as a special case]\label{rem:OLS}In the special case that the treatment variable $g_t$ is uncorrelated with $\eta_t$, our method collapses to the residual based on OLS estimator by setting $z_t= g_t$. Then $\widehat\beta$ is similar to the OLS estimator analyzed in    \cite{belloni2014inference}, but with the high-dimensional Lasso step replaced by the PCA step.
\end{remark}

\begin{assumption}\label{ass4.1}
The regression and instrument errors and signal strength satisfy:
\begin{enumerate}
\item[(i)] $(\eta_t,\varepsilon_{g,t},\varepsilon_{z,t})$ is independent and identically distributed across $t$, independent of $(F,U)$, with $\E[\eta_t] = \E[\varepsilon_{g,t}] = \E[\varepsilon_{z,t}] = 0$, the exclusion restriction $\E[\eta_t\,\varepsilon_{z,t}] = 0$, and the relevance condition $\gamma := \E[\varepsilon_{g,t}\,\varepsilon_{z,t}] \ne 0$. 
\item[(ii)] $\sqrt{T}\,\log T = o(\nu_M^{2})$.
\item[(iii)] $\E|\eta_t|^{8} + \E|\varepsilon_{g,t}|^{8} + \E|\varepsilon_{z,t}|^8 \le C$, with $\E[\varepsilon_{z,t}^{2}\eta_t^{2}]>0$.
\item[(iv)] When $r\ge1$, the factor sample mean obeys $T^{-1/2}\sum_{t=1}^T f_t=O_P(1)$.
\item[(v)]
When $r\ge2$, with probability converging to $1$, the within-signal singular values are uniformly separated at the same scale, so for some $c$
\[
\min_{1\le k<r}\{\lambda_k(M)-\lambda_{k+1}(M)\}\ge c\,\nu_M\sqrt T.
\]
\end{enumerate}
\end{assumption}

Condition~(ii) is the factor-strength requirement relevant for inference; it is equivalent to $\lambda_r(B'B) \gg T^{1/2}\log T$ and is weaker than the standard strong-factor assumption. {Condition~(iii) is a uniform moment bound that supports both Lyapunov's central limit theorem for $\sqrt{T}(\widehat\beta-\beta)$ and consistency of the heteroskedasticity-robust sandwich variance estimator.} Condition~(iv) is the only time-series restriction imposed on the factor sample mean; it holds, for example, under standard weak-dependence conditions that give a central limit theorem, and ensures that removing the intercept perturbs the factor matrices by only $O_P(1)$ in operator norm.
Condition~(v) can be commonly found in existing econometric analysis works, e.g. \cite{fan2021recent}.
When $z_t = g_t$, the relevance condition $\gamma = \E[\varepsilon_{g,t}^2] \ne 0$ holds automatically and the exclusion $\E[\eta_t\,\varepsilon_{z,t}]=0$ becomes the OLS exogeneity $\E[\eta_t\,\varepsilon_{g,t}]=0$.

\begin{theorem}\label{thinf}
Suppose Assumptions~\ref{aspt:D0},~\ref{aspt:X}, and~\ref{ass4.1} hold,  with the convention that Assumption~\ref{ass4.1}(ii) imposes no condition when $r=0$ since $\nu_M$ is undefined in that case. Define the residual $\widehat\eta_t = \widehat\varepsilon_{y,t} - \widehat\beta\,\widehat\varepsilon_{g,t}$ and the Eicker--White (HC$_0$) variance estimator
\[
\widehat\sigma^{2} \;=\; \bigl(\widehat\varepsilon_z'\widehat\varepsilon_g/T\bigr)^{-1}\,\Bigl(\tfrac{1}{T}\sum_{t=1}^{T}\widehat\varepsilon_{z,t}^{\,2}\,\widehat\eta_{t}^{\,2}\Bigr)\,\bigl(\widehat\varepsilon_z'\widehat\varepsilon_g/T\bigr)^{-1}.
\]
Then for every fixed (bounded) integer $R$ with $0\leq r \leq R$,
\[
\sqrt{T}\,\widehat\sigma^{-1}(\widehat\beta - \beta) \;\overset{d}{\longrightarrow}\; \mathcal N(0, 1).
\]
{The estimator $\widehat\sigma^2$ is consistent for the just-identified IV sandwich variance
\[
\sigma^2 \;=\; \gamma^{-2}\,\E[\varepsilon_{z,t}^{2}\eta_t^{2}],\qquad \gamma\;=\;\E[\varepsilon_{g,t}\,\varepsilon_{z,t}],\ \text{as in Assumption~\ref{ass4.1}(i)}.
\]
In the OLS specialization $z_t=g_t$, this reduces to $\sigma^2=\E[\varepsilon_{g,t}^2]^{-2}\,\E[\varepsilon_{g,t}^2\eta_t^2]$, the partialled-out OLS sandwich.}
\end{theorem}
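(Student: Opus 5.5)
The plan is to reduce everything to a residualized-IV expansion and show that replacing the true projection $P_{[1_T,F]}$ by $P_{[1_T,\widehat F]}$ changes both the numerator and the denominator by $o_P(T^{-1/2})$ and $o_P(1)$ respectively. Write $\widetilde P=P_{[1_T,\widehat F]}$ and $\mathcal E_y,\mathcal E_g,\mathcal E_z$ for the $T$-vectors of $\eta_t,\varepsilon_{g,t},\varepsilon_{z,t}$. Substituting the model gives
\[
\widehat\beta-\beta=\bigl(Z'(I-\widetilde P)G\bigr)^{-1}Z'(I-\widetilde P)(F\rho+\mathcal E_y).
\]
Further substituting $Z=\mu_z1_T+F\alpha_z+\mathcal E_z$ and the analogous expression for $G$, the argument rests on three facts.

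\emph{(a) Nuisance removal.} We need $T^{-1/2}\|(I-\widetilde P)F\|\to0$ fast enough, and more precisely $\|(I-\widetilde P)F\|=O_P(1)$ times a small factor. Here I use the expanded rotation of Theorem~\ref{thm:factor}(i). By \eqref{eq2.1}, $F=\widehat F H^{+\prime}{}'\!\!$-type identities are awkward, so instead I write $F=\widehat FH^{+}-N^{-1}U'\widehat BH^{+}$ from \eqref{eq2.2}. Then $(I-\widetilde P)F=-(I-\widetilde P)N^{-1}U'\widehat BH^{+}$, and the intercept is handled by Assumption~\ref{ass4.1}(iv). Bounding with Theorem~\ref{thm:factor}(ii) gives $\|(I-\widetilde P)F\|=O_P(\sqrt T\nu_M^{-1})$.

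\emph{(b) Cross terms with the errors.} Terms such as $T^{-1/2}\mathcal E_z'\widetilde P\mathcal E_y$ and $T^{-1/2}\mathcal E_z'(I-\widetilde P)F\rho$ must vanish. Because $(\eta,\varepsilon)$ is independent of $(F,U)$, I condition on $(F,U)$. Then $\widetilde P$ is a fixed projection of rank $R+1=O(1)$, so $E[\mathcal E_z'\widetilde P\mathcal E_y\mid F,U]=\mathrm{tr}(\widetilde P)E[\varepsilon_z\eta]=0$ by exclusion, with conditional variance $O(R)$. Hence this term is $O_P(T^{-1/2})$ after scaling, and no sample splitting is needed. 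For $\mathcal E_z'(I-\widetilde P)F\rho$, the conditional variance is $\|(I-\widetilde P)F\rho\|^2=O_P(T\nu_M^{-2})$, giving $o_P(\sqrt T)$.

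\emph{(c) The product term.} The term $(F\alpha_z)'(I-\widetilde P)F\rho$ is of order $\|(I-\widetilde P)F\|^2$. A crude bound gives $T\nu_M^{-2}$, which is $o(\sqrt T)$ exactly under Assumption~\ref{ass4.1}(ii). To get this with the $\log T$ factor rather than $T^{\varepsilon}$, I avoid $\prec$ and bound $\|(I-\widetilde P)N^{-1}U'\widehat BH^{+}\|$ more carefully. Split $\widehat B=(\widehat B_r,\widehat B_{-r})$ and note that $H^{+}$ loads mainly on the first $r$ columns, since Theorem~\ref{thm:main}(iii) makes $H$'s last $R-r$ columns of order $\nu_M^{-2}$-relative. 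The leading-$r$ part uses standard spiked-eigenvector perturbation together with Assumption~\ref{ass4.1}(v) to control individual eigenvectors. Entrywise terms where $\max_t\|f_t\|=O_P(\log T)$ enters produce the $\log T$.

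When $r=0$, $F$ is absent and only step (b) is needed, using Corollary~\ref{coro:rzero} only implicitly. The denominator then satisfies $T^{-1}Z'(I-\widetilde P)G=T^{-1}\mathcal E_z'\mathcal E_g+o_P(1)\to\gamma$ by the same bounds. The numerator is $T^{-1/2}\sum_t\varepsilon_{z,t}\eta_t+o_P(1)$, which is asymptotically normal by Lyapunov using the eighth moments.

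For variance consistency, I write $\widehat\eta_t=\eta_t+\delta_t$ and $\widehat\varepsilon_{z,t}=\varepsilon_{z,t}+\delta'_t$, where $\delta,\delta'$ collect the projection remainders and the $(\widehat\beta-\beta)$ term. Then $\|\delta\|^2/T=o_P(1)$ by (a)--(b). The fourth-moment cross terms are controlled by Cauchy--Schwarz together with $\max_t\widetilde P_{tt}$. The leverage bound $\max_t\widetilde P_{tt}=o_P(1)$ follows from incoherence, Theorem~\ref{thm:main}(ii) with $\zeta_t=e_t$, and the entrywise bound on $\widehat V_r$. I expect step (c), the sharp second-order bound with only a $\log T$ loss, to be the main obstacle.
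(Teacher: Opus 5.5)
Your CLT argument is correct, and it takes a genuinely different, shorter route than the paper. The paper demeans, passes to QR bases of the centered fitted spaces (this is where Assumption~\ref{ass4.1}(iv) enters), and compares $P_{\widehat F^c}$ with $P_{F^c}$ block by block. It uses Davis--Kahan for the leading $r$ directions and Theorem~\ref{thm:main}(iii) for the extra ones; your step (b) coincides with its Step~2. You instead use $(I-\widetilde P)[1_T,\widehat F]=0$ together with \eqref{eq2.2}, so that $(I-\widetilde P)F=-(I-\widetilde P)N^{-1}U'\widehat BH^{+}$. The intercept is then annihilated exactly, so Assumption~\ref{ass4.1}(iv) is not needed for it. Enlarging the projection space can only shrink $\|(I-\widetilde P)F\|$, so overestimation is automatically harmless for the bias, and the extra components enter only through the fixed-rank conditioning argument. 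Your route therefore needs neither the demeaning nor Theorem~\ref{thm:main}(iii) for the CLT. The paper's route instead produces block-by-block residual approximations, which it reuses for its sup-norm bounds.

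Two corrections. First, (c) is not an obstacle. The bound in (a) is a plain $O_P$ bound with no $T^{\varepsilon}$ loss, since it uses only $\|U\|=O_P(\sqrt T)$, $\|\widehat B\|=\sqrt N$ and $\|H^{+}\|=O_P(\nu_{\min}^{-1/2})$ from Proposition~\ref{prop:H-invert}. Hence $|\alpha_z'F'(I-\widetilde P)F\rho|=O_P(T\nu_M^{-2})=o_P(\sqrt T)$ already. The split of $\widehat B$, Assumption~\ref{ass4.1}(v), and the $\log T$ play no role in the CLT; in the paper they enter only through the entrywise bounds of Step~4, used for the variance estimator.

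Second, your HC$_0$ step has a hole as written. Knowing $\|\delta\|^2/T=o_P(1)$ and $\max_t\widetilde P_{tt}=o_P(1)$ does not control the factor remainders $a_\eta:=(I-\widetilde P)F\rho$ and $a_z:=(I-\widetilde P)F\alpha_z$ entrywise. Leverage only gives $|(\widetilde PF\rho)_t|\le\widetilde P_{tt}^{1/2}\|F\rho\|=o_P(\sqrt T)$, so terms such as $T^{-1}\sum_t\varepsilon_{z,t}^2a_{\eta,t}^2$ and $T^{-1}\sum_t a_{z,t}^2a_{\eta,t}^2$ are left uncontrolled. The repair uses only ingredients you already have:
\begin{itemize}
\item keep the rate $T^{-1}(\|\delta\|^2+\|\delta'\|^2)=O_P(\nu_M^{-2}+T^{-1})$ instead of just $o_P(1)$;
\item use $\max_t(\varepsilon_{z,t}^2+\eta_t^2)=O_P(T^{1/4})$, which follows from the eighth moments;
\item bound $\sum_t(\delta'_t)^2\delta_t^2\le\|\delta'\|^2\|\delta\|^2$.
\end{itemize}
For $\Delta_t:=\widehat\varepsilon_{z,t}\widehat\eta_t-\varepsilon_{z,t}\eta_t$ this yields $T^{-1}\sum_t\Delta_t^2=O_P(T^{1/4}\nu_M^{-2}+T\nu_M^{-4}+T^{-3/4})=o_P(1)$ under $\sqrt T=o(\nu_M^2)$. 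The meat then converges by Cauchy--Schwarz. Completed this way, your proof needs neither the leverage bound, nor Theorem~\ref{thm:main}(ii), nor any entrywise eigenvector perturbation.
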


We briefly discuss {the variance-vs-bias tradeoff behind Theorem~\ref{thinf}. Including $R-r$ extra principal components removes $R-r$ extra degrees of freedom from the residualized regression, creating a finite-sample degrees-of-freedom cost of the familiar order $T/(T-R-1)= {1+O(R/T)}$ {; this is asymptotically negligible for fixed $R$ but should be read as a finite-sample variance cost when $R$ is moderate relative to $T$, as documented in Section~\ref{secsim}}. Overestimation does not induce first-order bias: by Theorem~\ref{thm:main}(iii), the overestimated principal directions are asymptotically orthogonal to the factor space, so they do not remove confounding signal. }

Moving on to the variance,  note that  if $F$ were observed, the score would be $T^{-1/2}\sum_t\varepsilon_{z,t}\eta_t$. Replacing $F$ by $\widehat F$ introduces additional terms involving the residual errors and the estimated projection $P_{\widehat F}$. Because $\widehat F$ is a function of $(F,U)$ and the regression errors are independent of $(F,U)$, these terms can be controlled conditionally on the factor-estimation sample (see also Remark~\ref{rem:DML}). The residual bounds established in the proof of Theorem~\ref{thinf} (Appendix~\ref{sec:proofsecapp}) show that the effect of estimating the factor space is $o_P(1)$ after $\sqrt{T}$ normalization whenever $\sqrt{T}\log T=o(\nu_M^2)$. Thus the feasible residualized score has the same first-order limit as the infeasible score based on the true factor space. Therefore, the net effect is no first-order bias and only a mild finite-sample efficiency cost relative to the consequences of underestimation.

 {Theorem~\ref{thinf} does not prescribe how large $R$ should be in finite samples; we defer concrete guidance to Section~\ref{secsim}, where variance inflation as a function of $R/T$ is examined empirically. A working dimension chosen slightly above the value returned by an information criterion or an eigenvalue-ratio test is a natural conservative default. In addition, Theorem~\ref{thinf} covers the boundary case $r=0$ (cf.\ Corollary~\ref{coro:rzero}): estimating $R>0$ ``factors'' when none are present does not invalidate inference. }

\begin{remark}[Connection to related work]\label{rem:DML}
 {In canonical DML, cross-fitting is used to weaken the dependence between the estimated nuisance and the score evaluated on the same observations. Here that dependence is structurally absent: $\widehat F$ is a measurable function of $(F,U)$. By  Assumption~\ref{ass4.1}, $(F,U)$, and therefore $\widehat F$, are 
 independent of   the regression and instrument errors,  so no sample splitting is needed.}

{The closest non-DML antecedent is \cite{MW11}, who establish robustness-to-overspecification for interactive-fixed-effects panel models via Kato perturbation of a profile-likelihood objective. Two further contrasts are worth noting beyond the rate comparison given in Section~\ref{secextra}. First, as noted in the Introduction, the operator-perturbation route is more flexible across estimator families (e.g.\ quasi-MLE, GMM), while the local-law route specializes to PCA but yields sharper rates when applicable. Second, the variance-sandwich consistency in Theorem~\ref{thm:factor}(iii) that underlies our unadjusted-sandwich CLT does not appear to be available from operator-perturbation alone.}
\end{remark}

\section{Simulations}\label{secsim}

We report Monte Carlo evidence on the finite-sample behavior of the fixed-order PCA inference of Section~\ref{secapp}, {focusing on the OLS specialization $z_t=g_t$ (Remark~\ref{rem:OLS})}. The data-generating process has $i = 1, \ldots, N$ and $t = 1, \ldots, T$, intercepts $\mu_g = 2$ and $\mu_y = 3$, and $\beta = 0$. Loadings are generated sparsely:
\[
b_{i,k} \sim \mathcal N(0, 1) \text{ with probability } p_N = N^{-\alpha}, \qquad b_{i,k} = 0 \text{ otherwise},
\]
so a non-zero $\alpha\in[0,1)$ controls the strength of the latent factors: at $\alpha = 0$ the loadings are dense and $\nu_M\asymp\sqrt N$ (the classical strong-factor case of \cite{bai03}); as $\alpha$ grows the non-zero proportion $N^{-\alpha}$ shrinks and $\nu_M\asymp N^{(1-\alpha)/2}$ approaches the boundary $\nu_M\to\infty$ admitted by Assumption~\ref{aspt:X}(iii). Idiosyncratic errors are $u_t = \Sigma_e^{1/2}e_t$ with $e_{i,t}$ i.i.d.\ $\mathcal N(0,1)$ across both $i$ and $t$, exactly as required by Assumption~\ref{aspt:D0}(i), and $\Sigma_e = \mathrm{diag}(D)$ with $D_{ii}\sim\mathrm{Uniform}(0.5,1.5)$. The regression errors $\varepsilon_{g,t}$, $\eta_t$, the factors $f_{k,t}$, the loadings $\rho_k$, and $\alpha_{g,k}$ are all i.i.d.\ standard normal and mutually independent.

For each replication, $\widehat\beta$ is computed by partialling out $[1_T, \widehat F]$ from $y_t$ and $g_t$ and running OLS on the resulting residuals, with $\widehat F$ from the SVD of $X$ as defined in Section~\ref{secapp}; standard errors are Eicker--White HC$_0$. We report the standardized statistic
\[
t_\beta = \sqrt{T}\,\frac{\widehat\beta - \beta}{\widehat\sigma},
\]
and assess the closeness of its empirical distribution under the null $\beta = 0$ to the standard-normal benchmark implied by Theorem~\ref{thinf}. The tables below report, across $1{,}000$ Monte Carlo replications per cell, the sample mean and sample standard deviation (sd) of $t_\beta$, its $0.025$ and $0.975$ quantiles, and the Kolmogorov--Smirnov (KS) $p$-value against $\mathcal N(0,1)$. The Monte Carlo standard error of the sample sd at this resolution is roughly $1/\sqrt{2{,}000}\approx 0.022$.

\paragraph{Choice of $(N, T, \alpha)$} The theoretical conditions of Section~\ref{secapp} impose three joint requirements: (a) Assumption~\ref{aspt:X}(ii) requires $N/T \to \phi$ with $\phi \in (0, \infty)$ and $\phi \neq 1$, so $N$ and $T$ should grow proportionally and the aspect ratio should be bounded away from one; (b) Assumption~\ref{ass4.1}(ii) imposes the rate condition $\sqrt{T}\log T = o(\nu_M^{2})$, which under the sparse-loading design becomes $T\log^2 T = o(N^{2 - 2\alpha})$ and binds non-trivially when $\alpha$ is close to $1/2$; (c) the working dimension $R$ should be a fixed bounded overestimate of the true $r$. We therefore organize the evidence around three axes that map to (a)--(c), with $N = 200$ throughout:
\begin{itemize}
\item Experiment~1 fixes $\alpha = 0$ (strong factors, so the rate condition is slack) and varies $T \in \{100, 400, 800\}$, giving $N/T \in \{2, 0.5, 0.25\}$, all bounded away from one. This isolates the role of the aspect ratio.
\item Experiment~2 fixes $T = 400$ (so $N/T = 0.5$, well inside the theory) and varies $\alpha \in \{0.0, 0.1, 0.2, 0.3, 0.4\}$, sweeping factor strength from the classical regime toward the rate boundary.
\item Experiment~3 sets $r = 0$ (no factors), so the inference reduces to the partialled-out boundary case, and varies $T \in \{100, 400, 800\}$.
\end{itemize}

\paragraph{Experiment 1: varying the aspect ratio $N/T$} Table~\ref{table:aspect} reports the diagnostics for $R \in \{1, 2, 3, 6, 12, 30\}$, spanning under-specification ($R < r$), correct specification ($R = r$), and over-specification. The contrast across $R$ is sharp. When $R < r$, the sd of $t_\beta$ explodes to between $4$ and $13$ and KS rejects at every $T$. Once $R \ge r$, the sd drops to between $1.01$ and $1.13$ and KS $p$-values typically exceed $0.4$; at $T \in \{400, 800\}$ the standard-normal approximation is quite accurate across the whole $R \ge r$ range, and at $T = 100$ it degrades only once $R$ approaches $T/3$, consistent with the intuition of  variance-inflation. The aspect-ratio behaviour is symmetric: the $T = 100$ ($N/T = 2$) and $T = 400$ ($N/T = 0.5$) columns behave equivalently, in line with the symmetry of Assumption~\ref{aspt:X}(ii) about $\phi = 1$. Figure~\ref{fig:hist} displays the corresponding histograms for $R \in \{2, 3, 12\}$, with the $R = 2$ column ($R < r$) wildly diffuse and most mass off the plotting window.

\begin{figure}[h!]
\centering
\includegraphics[width=\textwidth]{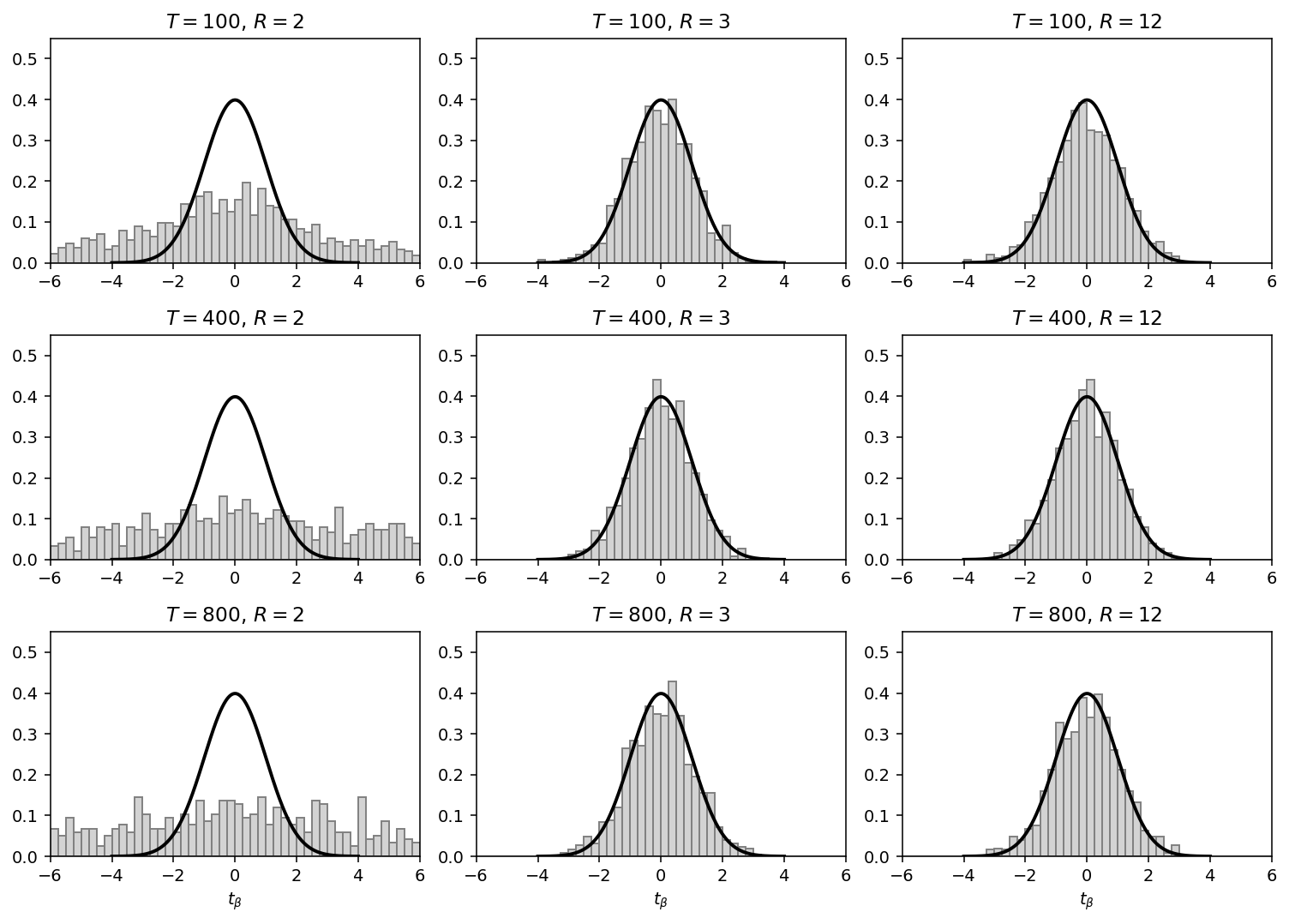}
\caption{Empirical density of $t_\beta$ across $1{,}000$ Monte Carlo replications under the design of Experiment~1 ($r = 3$, $\alpha = 0$, $N=200$). Rows: $T \in \{100, 400, 800\}$. Columns: $R \in \{2, 3, 12\}$. Black curves are the standard-normal density. The $R = 2$ column ($R < r$) is wildly diffuse and most of its mass lies outside the plotting window.}
\label{fig:hist}
\end{figure}

\begin{table}[htbp]
\centering
\caption{Experiment~1, $r = 3$, $\alpha = 0$. Each entry is the sample mean and sample standard deviation (sd) of $t_\beta$ across $1{,}000$ Monte Carlo replications, the $0.025$ and $0.975$ sample quantiles ($\alpha_{.025}$, $\alpha_{.975}$), and the $p$-value of the Kolmogorov--Smirnov test of the empirical distribution of $t_\beta$ against $\mathcal N(0, 1)$. The standard-normal benchmarks for the quantiles are $\pm 1.96$.}
\label{table:aspect}
\begin{tabular}{c|cccccc}
\toprule
$T$ & $R$ & mean & sd & $\alpha_{.025}$ & $\alpha_{.975}$ & KS $p$ \\
\midrule
\multirow{6}{*}{$100$}
 & $1$  & $-0.11$ & $5.19$  & $-10.09$ & $10.25$ & $<10^{-4}$ \\
 & $2$  & $-0.21$ & $4.25$  & $-9.50$  & $9.17$  & $<10^{-4}$ \\
 & $3$  & $-0.02$ & $1.06$  & $-2.09$  & $2.12$  & $0.475$ \\
 & $6$  & $-0.02$ & $1.09$  & $-2.18$  & $2.21$  & $0.335$ \\
 & $12$ & $-0.02$ & $1.12$  & $-2.26$  & $2.16$  & $0.153$ \\
 & $30$ & $-0.01$ & $1.28$  & $-2.68$  & $2.52$  & $0.001$ \\
\midrule
\multirow{6}{*}{$400$}
 & $1$  & $+0.07$ & $9.72$  & $-20.06$ & $18.15$ & $<10^{-4}$ \\
 & $2$  & $-0.05$ & $8.62$  & $-19.42$ & $18.14$ & $<10^{-4}$ \\
 & $3$  & $-0.00$ & $1.01$  & $-2.06$  & $1.96$  & $0.947$ \\
 & $6$  & $+0.00$ & $1.01$  & $-2.08$  & $1.95$  & $0.943$ \\
 & $12$ & $+0.00$ & $1.02$  & $-2.06$  & $1.97$  & $0.675$ \\
 & $30$ & $-0.00$ & $1.05$  & $-2.02$  & $2.02$  & $0.600$ \\
\midrule
\multirow{6}{*}{$800$}
 & $1$  & $+0.29$ & $13.07$ & $-26.75$ & $24.60$ & $<10^{-4}$ \\
 & $2$  & $+0.47$ & $12.05$ & $-24.92$ & $25.19$ & $<10^{-4}$ \\
 & $3$  & $+0.01$ & $1.07$  & $-2.24$  & $2.09$  & $0.521$ \\
 & $6$  & $+0.01$ & $1.07$  & $-2.20$  & $2.08$  & $0.500$ \\
 & $12$ & $+0.01$ & $1.07$  & $-2.25$  & $2.11$  & $0.603$ \\
 & $30$ & $+0.01$ & $1.08$  & $-2.21$  & $2.11$  & $0.387$ \\
\bottomrule
\end{tabular}
\end{table}

\paragraph{Experiment 2: varying the factor strength $\alpha$} Table~\ref{table:strength} reports the diagnostics across $\alpha$ and $R \in \{1, 2, 3, 6, 12\}$. As $\alpha$ grows, $\nu_M\asymp N^{(1-\alpha)/2}$ shrinks, so the rate condition $\sqrt T\log T = o(\nu_M^2)$ becomes $T\log^2 T = o(N^{2-2\alpha})$; the rate-condition margin $N^{2-2\alpha}/T$ decays as $\alpha$ increases. The simulations track this scale sharply. Under-specified rows ($R \in \{1, 2\}$) behave as in Experiment~1, with sd of $t_\beta$ between $8$ and $10$ across all $\alpha$. Once $R \ge r$, the sd becomes a clean monotone function of $\alpha$: $1.07$--$1.08$ at $\alpha \in \{0.0, 0.1\}$ (rate condition slack, KS $p \gtrsim 0.25$), $1.20$ at $\alpha = 0.2$, $1.31$ at $\alpha = 0.3$, and $1.83$ at $\alpha = 0.4$, with $0.025$/$0.975$ quantiles widening to $\pm 3.7$. The role of the rate condition $\sqrt T\log T = o(\nu_M^2)$ is therefore not a proof artifact: it sharply governs the finite-sample quality of the standard-normal approximation.

\begin{table}[htbp]
\centering
\caption{Experiment~2, $r = 3$, $T = 400$. Each entry is the sample mean and sample standard deviation (sd) of $t_\beta$ across $1{,}000$ Monte Carlo replications, the $0.025$ and $0.975$ sample quantiles, and the KS $p$-value against $\mathcal N(0, 1)$.}
\label{table:strength}
\begin{tabular}{c|cccccc}
\toprule
$\alpha$ & $R$ & mean & sd & $\alpha_{.025}$ & $\alpha_{.975}$ & KS $p$ \\
\midrule
\multirow{5}{*}{$0.0$}
 & $1$  & $-0.04$ & $9.66$ & $-18.24$ & $19.42$ & $<10^{-4}$ \\
 & $2$  & $-0.06$ & $8.49$ & $-18.47$ & $17.09$ & $<10^{-4}$ \\
 & $3$  & $-0.00$ & $1.07$ & $-2.10$  & $2.10$  & $0.315$ \\
 & $6$  & $+0.00$ & $1.07$ & $-2.03$  & $2.14$  & $0.393$ \\
 & $12$ & $+0.01$ & $1.08$ & $-2.09$  & $2.13$  & $0.364$ \\
\midrule
\multirow{5}{*}{$0.1$}
 & $1$  & $+0.10$ & $9.66$ & $-20.06$ & $18.72$ & $<10^{-4}$ \\
 & $2$  & $+0.35$ & $8.55$ & $-16.75$ & $18.48$ & $<10^{-4}$ \\
 & $3$  & $+0.04$ & $1.07$ & $-2.00$  & $2.17$  & $0.253$ \\
 & $6$  & $+0.04$ & $1.07$ & $-2.01$  & $2.20$  & $0.076$ \\
 & $12$ & $+0.04$ & $1.07$ & $-2.04$  & $2.13$  & $0.077$ \\
\midrule
\multirow{5}{*}{$0.2$}
 & $1$  & $+0.21$ & $9.69$ & $-18.65$ & $19.64$ & $<10^{-4}$ \\
 & $2$  & $+0.25$ & $8.33$ & $-15.62$ & $18.41$ & $<10^{-4}$ \\
 & $3$  & $-0.02$ & $1.19$ & $-2.42$  & $2.27$  & $0.027$ \\
 & $6$  & $-0.02$ & $1.20$ & $-2.40$  & $2.30$  & $0.018$ \\
 & $12$ & $-0.02$ & $1.20$ & $-2.47$  & $2.23$  & $0.016$ \\
\midrule
\multirow{5}{*}{$0.3$}
 & $1$  & $-0.04$ & $9.31$ & $-18.85$ & $17.78$ & $<10^{-4}$ \\
 & $2$  & $+0.17$ & $8.02$ & $-17.29$ & $15.20$ & $<10^{-4}$ \\
 & $3$  & $+0.04$ & $1.31$ & $-2.66$  & $2.45$  & $<10^{-4}$ \\
 & $6$  & $+0.03$ & $1.31$ & $-2.71$  & $2.51$  & $<10^{-4}$ \\
 & $12$ & $+0.03$ & $1.31$ & $-2.76$  & $2.54$  & $<10^{-4}$ \\
\midrule
\multirow{5}{*}{$0.4$}
 & $1$  & $-0.54$ & $9.52$ & $-19.34$ & $19.33$ & $<10^{-4}$ \\
 & $2$  & $-0.16$ & $8.31$ & $-17.13$ & $17.06$ & $<10^{-4}$ \\
 & $3$  & $-0.06$ & $1.83$ & $-3.69$  & $3.70$  & $<10^{-4}$ \\
 & $6$  & $-0.06$ & $1.83$ & $-3.66$  & $3.71$  & $<10^{-4}$ \\
 & $12$ & $-0.06$ & $1.82$ & $-3.65$  & $3.70$  & $<10^{-4}$ \\
\bottomrule
\end{tabular}
\end{table}

\paragraph{Experiment 3: boundary case ($r = 0$)} Theorem~\ref{thinf} includes the no-factor boundary; Table~\ref{table:boundary} illustrates its Gaussian conclusion in this case. At $T \in \{400, 800\}$ the standard-normal approximation is essentially exact for every $R \in \{0, 3, 6, 12, 30\}$ (sd within $1.02$--$1.10$, KS $p \gtrsim 0.4$); at $T = 100$, the approximation deteriorates only when $R$ approaches $T/3$.

\begin{table}[htbp]
\centering
\caption{Experiment~3, $r = 0$. Each entry is the sample mean and sample standard deviation (sd) of $t_\beta$ across $1{,}000$ Monte Carlo replications, the $0.025$ and $0.975$ sample quantiles, and the KS $p$-value against $\mathcal N(0, 1)$.}
\label{table:boundary}
\begin{tabular}{c|cccccc}
\toprule
$T$ & $R$ & mean & sd & $\alpha_{.025}$ & $\alpha_{.975}$ & KS $p$ \\
\midrule
\multirow{5}{*}{$100$}
 & $0$  & $+0.02$ & $1.06$ & $-1.90$ & $2.14$ & $0.795$ \\
 & $3$  & $+0.02$ & $1.09$ & $-1.97$ & $2.17$ & $0.664$ \\
 & $6$  & $+0.01$ & $1.11$ & $-1.94$ & $2.23$ & $0.464$ \\
 & $12$ & $+0.01$ & $1.13$ & $-2.02$ & $2.34$ & $0.088$ \\
 & $30$ & $-0.02$ & $1.28$ & $-2.47$ & $2.73$ & $<10^{-3}$ \\
\midrule
\multirow{5}{*}{$400$}
 & $0$  & $-0.02$ & $1.05$ & $-2.14$ & $2.06$ & $0.711$ \\
 & $3$  & $-0.02$ & $1.05$ & $-2.16$ & $2.10$ & $0.363$ \\
 & $6$  & $-0.02$ & $1.06$ & $-2.17$ & $2.10$ & $0.698$ \\
 & $12$ & $-0.02$ & $1.07$ & $-2.18$ & $2.09$ & $0.545$ \\
 & $30$ & $-0.02$ & $1.10$ & $-2.22$ & $2.19$ & $0.459$ \\
\midrule
\multirow{5}{*}{$800$}
 & $0$  & $-0.01$ & $1.02$ & $-2.08$ & $1.98$ & $0.935$ \\
 & $3$  & $-0.01$ & $1.02$ & $-2.07$ & $2.00$ & $0.984$ \\
 & $6$  & $-0.01$ & $1.03$ & $-2.09$ & $2.01$ & $0.980$ \\
 & $12$ & $-0.01$ & $1.04$ & $-2.12$ & $2.02$ & $0.958$ \\
 & $30$ & $-0.01$ & $1.05$ & $-2.16$ & $2.08$ & $0.962$ \\
\bottomrule
\end{tabular}
\end{table}

Two operational takeaways follow. Since $\nu_M$ is unobserved in practice, the leading singular values of $X$ are a natural diagnostic for the rate condition $\sqrt T\log T = o(\nu_M^2)$: if they do not visibly separate from the bulk, the inflation in Table~\ref{table:strength} should be expected. And since under-specification is catastrophic while modest over-specification is essentially free, a natural conservative default is to choose $R$ slightly above the value returned by an information criterion or eigenvalue-ratio test, and to interpret unstable estimates across $R \ge \widehat r$ as evidence that the factor-strength regime is unfavorable rather than that $R$ is too small.


\section{Empirical Application}\label{secemp}

We illustrate fixed-order PCA on a single cross-section from the Health and Retirement Study \citep{Sonnega2014HRS}, the panel survey at the empirical core of the structural retirement literature including \cite{French2011HRS}, with respondents treated as i.i.d.\ draws. The substantive question is whether \emph{labor supply at the intensive-and-extensive margin}---the choice variable $N_t$ in the canonical life-cycle model of \cite{French2011HRS}---affects depressive symptoms after controlling for a low-dimensional latent state of vitality, socioeconomic resources, and underlying mental-health propensity that drives both labor-supply choices and contemporaneous well-being. We apply the partialled-out residualized regression of Section~\ref{secapp}.

{The key statistical insight from this application (as shown by results presented in Table~\ref{table:hrs-real})  is the \emph{robustness profile} of the fixed-order PCA estimator across $R$. The IV estimator stays positive at every $R\ge 1$, and over the upper-$R$ tail $R\in\{7,\ldots,28\}$ it stays within $\pm 8\%$ of the saturating $R=N$ value of $+0.660$ --- exactly the robustness-to-tuning-parameter property the fixed-order PCA framework is designed to deliver. The larger readings at $R\in\{2,3,5\}$, which peak at $+1.005$ (some $52\%$ above the saturating value), are confined to the small-$R$ region where $R$ may still fail to dominate $r$, and their pattern reproduces on real data the asymmetry documented in Section~\ref{secsim}: the estimator is sensitive to $R$ below the true dimension and flat above it. In contrast, a procedure that demands consistent recovery of $r$ would have to commit to a single $\widehat r$, whereas our framework certifies inference uniformly over any $R$ in the upper-$R$ tail.}

We now provide detailed implementation of this application. 
The data are from the RAND HRS Longitudinal File 1992--2022 v1.0, restricted to the wave-14 (2018) interview. After complete-case filtering, the sample has $T=14{,}672$ respondents. Treatment is annual hours worked,
\[
g_t \;=\; (\text{hours/week})_t \times (\text{weeks/year})_t \;+\; (\text{2nd-job hours/week})_t \times (\text{2nd-job weeks/year})_t,
\]
clipped to $[0, 5{,}000]$ and set to zero for non-workers. The marginal distribution is bimodal: a $62.3\%$ atom at $g_t=0$ (non-workers, including fully retired respondents and other labor-market non-participants) and a continuous mass concentrated near full-time, with $\Pr(g_t \in [1{,}000, 2{,}200)) = 20.5\%$ and $\Pr(g_t \ge 2{,}200) = 10.5\%$. The continuous specification strictly nests the binary retirement indicator: it preserves the extensive margin (the $g_t=0$ atom) while resolving the intensive-margin variation among bridge-job holders, partial retirees, and full-time workers that the structural retirement literature treats as economically distinct states. The outcome $y_t = \mathrm{cesd}_t$ is the eight-item Center for Epidemiologic Studies Depression score, integer in $[0,8]$ with higher values indicating more depressive symptoms (sample mean $1.53$, sample standard deviation $2.02$). The control panel $x_t \in \mathbb R^N$ collects {$N=28$} standardized fundamentals, including sex, ethnicity, education, marital and veteran status,   chronic-condition and smoking indicators, cognitive scores and household variables.   Each measures a distinct dimension of the underlying biopsychosocial state.

A preliminary spectral  decomposition of the standardized panel returns shows that there are no single dominant factor: the first   three principal components capture roughly $27\%$ of the total variation.



 {To bring the IV machinery of Section~\ref{secapp} to bear, we use the institutional cutoff at age 62 as the instrument:
\[
z_t \;=\; \mathbf{1}\{\text{respondent $t$ is at least 62 years old}\}.
\]
Age 62 is the early Social Security claiming age and the empirically dominant discontinuity in U.S.\ retirement hazards. We prefer it to the alternative cutoff at age 65 because it isolates the labor-supply / Social Security channel from the Medicare insurance channel that turns on at 65. 
Conditional on the latent state $f_t$, $z_t$ shifts labor supply primarily through this institutional channel rather than through individual mental-health propensity, supporting the exclusion restriction in Assumption~\ref{ass4.1}(i). The first-stage relationship is unambiguously strong: at $R=0$, regressing $g_t$ on $z_t$ gives a slope of $-0.98$ thousand hours/year ($t=-57.7$), and  {the residualized first-stage Wald statistic remains in the range $|t|\in[40,55]$ across all working dimensions $R\in\{1,\ldots,28\}$ reported below}.\footnote{ {Estimating with the alternative cutoff $z_t=\mathbf{1}\{\text{age}_t\ge 65\}$ delivers qualitatively identical conclusions across all $R$: the same sign pattern, IV point estimates within roughly $5\%$ of the age-62 values, and the same significance ordering. We report the age-62 specification as primary because the absence of the Medicare channel makes the just-identified exclusion restriction more defensible.}}}

Table~\ref{table:hrs-real} reports both the OLS estimator $\widehat\beta_{\mathrm{OLS}}$ ($z_t=g_t$) and the IV estimator $\widehat\beta_{\mathrm{IV}}$ ($z_t=\mathbf{1}\{\text{age}_t\ge 62\}$) of Theorem~\ref{thinf} across {$R\in\{0,1,2,3,5,7,10,15,20,28\}$}, both with HC$_0$ standard errors and both reported per a $1{,}000$-hours-per-year increment.

 {One scope qualification should be flagged before reading the numbers. The i.i.d.-across-respondents assumption used in our theory ignores household clustering present in HRS, where spouses appear as separate observations sharing many of the controls. We report the unweighted HC$_0$ inference for transparency with our theory but note that a household-clustered standard error would slightly inflate the reported standard errors. With this caveat, the IV estimator under the exclusion restriction in Assumption~\ref{ass4.1}(i) does have a causal local-average-treatment-effect (LATE) interpretation in the sense of \cite{ImbensAngrist1994}: $\widehat\beta_{\mathrm{IV}}$ identifies the effect of labor supply on depressive symptoms among the subpopulation of compliers, namely respondents whose retirement timing responds to crossing the  {age-62 institutional threshold}. Off-panel confounders that are absorbed neither by $\widehat F$ nor by $z_t$ (for example, idiosyncratic preferences and unanticipated household shocks) cannot bias $\widehat\beta_{\mathrm{IV}}$ to first order so long as they are uncorrelated with the  {age-62 cutoff} after factor adjustment.}

{Three patterns are visible. (i) Omitted-factor bias at $R=0$ is severe: OLS returns $\widehat\beta = -0.276$ ($t=-19.7$), the strong negative cross-sectional association between hours worked and depressive symptoms documented in the HRS-based literature \citep{DaveRashadSpasojevic2008,MandalRoe2008}; once even one principal component is partialled out, the OLS estimate collapses to essentially zero (magnitude below $0.04$ across all $R\ge 1$). The unconditioned cross-sectional correlation is almost entirely accounted for by a single factor direction in $x_t$, and OLS without an instrument provides no informative signal once that factor is removed. (ii) The IV LATE, by contrast, is uniformly \emph{positive} across all $R\ge 1$, ranging between $+0.64$ and $+1.01$, and is flat over the upper-$R$ tail, taking the value $+0.660$ ($t=12.95$) at the saturating $R=N=28$. The sign flip relative to OLS at $R=0$ is the textbook signature of selection bias: respondents who endogenously work more hours are mentally healthier on average for unobserved reasons that the controls do not span. Once the age-62 instrument purges this selection, the IV LATE points the other way: among respondents whose labor supply is shifted by the institutional cutoff, an additional $1{,}000$ hours per year of work raises CES-D by roughly $0.66$ points (about a third of the outcome's standard deviation), consistent in sign with the IV-based retirement literature \citep{Bonsang2012,Coe2011,MazzonnaPeracchi2012,Insler2014} that finds retirement to be protective for mental health among compliers. (iii) HC$_0$ standard errors and the first-stage Wald statistic are essentially flat across $R\ge 1$ for both columns.}

\begin{table}[htbp]
\centering
\caption{ {Real-data estimates of the effect of annual labor supply (hours per year, in thousands) on the eight-item CES-D depression score in HRS wave 14 (2018), partialling out the top-$R$ principal components of the standardized $N=28$-dimensional control panel. Both columns instantiate Theorem~\ref{thinf}: the OLS column corresponds to the $z_t=g_t$ specialization, and the IV column to $z_t=\mathbf{1}\{\text{age}_t\ge 62\}$. HC$_0$ standard errors. $T=14{,}672$.}}
\label{table:hrs-real}
\begin{tabular}{c|cc|cc}
\toprule
\multirow{2}{*}{$R$} & \multicolumn{2}{c|}{OLS} & \multicolumn{2}{c}{IV} \\
\cmidrule(lr){2-3}\cmidrule(lr){4-5}
 & $\widehat\beta_{\mathrm{OLS}}$ & $t$ & $\widehat\beta_{\mathrm{IV}}$ & $t$ \\
\midrule
0 (no controls) & $-0.276$ & $-19.72$ & $+0.331$ & $+8.89$  \\
1               & $-0.019$ & $-1.38$  & $+0.640$ & $+16.27$ \\
2               & $-0.002$ & $-0.15$  & $+0.910$ & $+18.12$ \\
3               & $+0.018$ & $+1.23$  & $+1.005$ & $+19.15$ \\
5               & $+0.013$ & $+0.84$  & $+0.986$ & $+18.87$ \\
7               & $-0.003$ & $-0.23$  & $+0.711$ & $+14.90$ \\
10              & $-0.033$ & $-2.21$  & $+0.646$ & $+13.19$ \\
15              & $-0.032$ & $-2.17$  & $+0.647$ & $+13.28$ \\
20              & $-0.027$ & $-1.81$  & $+0.684$ & $+13.33$ \\
28 (saturating) & $-0.030$ & $-1.98$  & $+0.660$ & $+12.95$ \\
\bottomrule
\end{tabular}
\end{table}

\section{Conclusion}\label{secconcl}

This paper develops spectral theory for principal component analysis in  factor models when the working number of factors $R$ is fixed and weakly dominates the true, unknown factor dimension $r$. Leveraging anisotropic local laws from random matrix theory, we show that the overestimated empirical eigencomponents are noise-governed, incoherent, and near-orthogonal to the factor space; that the low-rank signal and factor space are recovered at the usual parametric rates under suitably generalized (expanded and compressed) rotations; and that factor-augmented inference on a treatment coefficient remains asymptotically valid for any bounded $R \ge r$ when $r\ge 1$. These results formally justify a common empirical practice (deliberately overestimating or adopting a conservative upper bound on the number of factors) and shift the analytical burden from consistent factor-number selection to the structurally milder requirement of bounding $r$ from above.

Beyond the technical contributions, our results have a methodological implication for empirical practice. The dominant approach in factor-model inference treats consistent dimension selection as a logically prior step: estimate $r$, condition on it, and proceed. This paper shows that this step can be replaced by the weaker requirement of specifying an upper bound $R\ge r$. The benefit is robustness: an inferential procedure that depends only on $R\ge r$ is insulated from the finite-sample volatility of $\widehat r$, which is well documented to be substantial in the signal regimes where applied researchers most need reliable inference. The cost is a variance inflation that scales with $R/T$. For empirically relevant settings such as factor-augmented treatment-effect inference, factor-based forecasting, and large-panel principal-component regression, this trade-off can be favorable.

 Several directions merit further investigation, listed in approximate order of substantive importance. First, the factor-augmented regression we consider assumes serially-uncorrelated residuals. A parallel extension to weakly serially-dependent residuals appears tractable so long as cross-sectional independence in $u_t$ is maintained.
 The genuinely difficult extension is to allow serial \emph{and} cross-sectional dependence simultaneously in $u_t$, since that is precisely the regime in which the anisotropic local law of \cite{knowles2017anisotropic} ceases to apply, and a fundamentally different random-matrix input would be required.  Second, our analysis imposes $N/T \to \phi \ne 1$ to inherit incoherence from \cite{knowles2017anisotropic}; removing this gap may require new random-matrix tools, perhaps through a fluctuation-scale argument at the Marchenko--Pastur edge. Finally, our framework restricts attention to bounded $R-r$; the regime in which $R$ grows slowly with $T$ would require sharper control of the cumulative variance contribution of the overestimated components and may be relevant for sieve-type applications. 

\newpage

\appendix 

\section{Regularity Conditions of the MP law}\label{sec:mp}

We summarize the regularity conditions on the population covariance spectrum, following \cite{knowles2017anisotropic}, using our notation system. Given $\Sigma_e$ with eigenvalues ordered $\sigma_1\geq \ldots \geq \sigma_N>0$,  recall from Section~\ref{secextra} the empirical spectral measure
\[
  \pi \mathrel{\mathop:}= \frac{1}{N} \sum_{i=1}^{N} \delta_{\sigma_i}.
\]
Following \cite{knowles2017anisotropic}, in the appendices $\phi \mathrel{\mathop:}= N/T$ denotes the finite-sample dimensional ratio; it converges to the limit in Assumption~\ref{aspt:X}(ii), and the distinction is immaterial for the statements below.

The MP law $\varrho$ describes the typical spectral distribution of the companion sample covariance matrix $T^{-1}U'U=T^{-1}E'\Sigma_e E$ when $T\to \infty$ and $E$ follows Assumption~\ref{aspt:D0}(i). Its nonzero eigenvalues coincide with those of $T^{-1}UU'$, so equivalently $\varrho$ describes the squared singular values of $T^{-1/2}U$ (together with the appropriate zero mass in the rectangular case).
The Stieltjes transform of $\varrho$ is the unique solution in $\mathbb{C}^+$
of the self-consistent equation
\begin{equation}\label{eq:selfconsistent}
  \frac{1}{m} = -z + \phi \int \frac{x}{1 + mx}\,\pi(\mathrm{d}x), \qquad z \in \mathbb{C}^+.
\end{equation}
Let $n \mathrel{\mathop:}= |\operatorname{supp}\pi \setminus \{0\}|$ be the number of distinct
nonzero eigenvalues of $\Sigma$, and write $\operatorname{supp}\pi \setminus \{0\} =
\{s_1 > s_2 > \cdots > s_n\}$. Setting $r_i \mathrel{\mathop:}= \phi\,\pi(\{s_i\})$,
the equation \eqref{eq:selfconsistent} is equivalently written as
\begin{equation}\label{eq:fdef}
  z = f(m), \qquad
  f(x) \mathrel{\mathop:}= -\frac{1}{x} + \sum_{i=1}^{n} \frac{r_i}{x + s_i^{-1}}.
\end{equation}
The function $f$ is smooth on each of the $n+1$ open intervals
\[
  I_1 \mathrel{\mathop:}= (-s_1^{-1},\, 0), \quad
  I_i \mathrel{\mathop:}= (-s_i^{-1},\, -s_{i-1}^{-1}) \;\; (i = 2,\ldots,n), \quad
  I_0 \mathrel{\mathop:}= \mathbb{R} \setminus \bigcup_{i=1}^{n} I_i,
\]
of the real projective line $\overline{\mathbb{R}} = \mathbb{R} \cup \{\infty\}$.

Let $\mathcal{C} \subset \overline{\mathbb{R}}$ denote the multiset of critical points of $f$,
where a nondegenerate critical point is counted once and a degenerate one twice, and where
$\infty$ is counted as a nondegenerate critical point when $\phi = 1$. Some of the known properties of $\mathcal{C}$ include:
\begin{itemize}
  \item $|\mathcal{C} \cap I_0| = |\mathcal{C} \cap I_1| = 1$ and
        $|\mathcal{C} \cap I_i| \in \{0, 2\}$ for $i = 2, \ldots, n$.
  \item $|\mathcal{C}| = 2p$ is even, for some integer $p \geq 1$.
\end{itemize}

We label the $2p - 1$ critical points in $I_1 \cup \cdots \cup I_n$ as
$x_1 \geq x_2 \geq \cdots \geq x_{2p-1}$, and denote by $x_{2p}$ the unique critical point
in $I_0$. The associated \emph{critical values} are
\[
  a_k \mathrel{\mathop:}= f(x_k), \qquad k = 1, \ldots, 2p,
\]
and satisfy $a_1 \geq a_2 \geq \cdots \geq a_{2p}$, with $a_k \in [0, C]$ for all $k$.
Moreover, $x_k = m(a_k)$, where $m$ is extended to $\mathbb{R}$ by continuity from
$\mathbb{C}^+$.

The support of the asymptotic density $\varrho$ in $(0, \infty)$ consists of exactly $p$
connected components:
\begin{equation}\label{eq:support}
  \operatorname{supp}\varrho \cap (0,\infty)
  = \bigcup_{k=1}^{p} [a_{2k},\, a_{2k-1}] \cap (0,\infty).
\end{equation}
Thus the $k$-th \emph{bulk component} of $\varrho$ is the interval $[a_{2k}, a_{2k-1}]$,
with left edge $a_{2k}$ and right edge $a_{2k-1}$. The spectral edges are collectively indexed
by $k = 1, \ldots, 2p$, where odd indices $k = 1, 3, \ldots, 2p-1$ correspond to right edges
and even indices $k = 2, 4, \ldots, 2p$ correspond to left edges of the respective bulk
components.

We now state the regularity conditions, which govern whether the density $\varrho$ is
well-behaved near a given spectral edge or in the interior of a given bulk component. Fix
a small constant $\delta>0$ throughout.

\begin{definition}[Regularity {\cite[Definition~2.7]{knowles2017anisotropic}}]
\label{def:regularity}
\hfill
\begin{enumerate}[\normalfont(i)]
  \item \textbf{Regular edge.} The edge $k \in \{1, \ldots, 2p\}$ is called \emph{regular}
    if the following three conditions hold simultaneously:
    \begin{equation}\label{eq:reg_edge}
      a_k \geq \delta, \qquad
      \min_{l \neq k} |a_k - a_l| \geq \delta, \qquad
      \min_{1 \leq i \leq n} |x_k + s_i^{-1}| \geq \delta.
    \end{equation}
  \item \textbf{Regular bulk component.} The $k$-th bulk component $k \in \{1, \ldots, p\}$
    is called \emph{regular} if, for every fixed $\delta' > 0$, there exists a constant
    $c \equiv c_{\delta, \delta'} > 0$ such that the density of $\varrho$ is bounded below by $c$
    on the compactly interior interval $[a_{2k} + \delta',\, a_{2k-1} - \delta']$.
\end{enumerate}
\end{definition}

\section{{Canonical rotation construction}}\label{secrotation}

This appendix records the explicit construction of the rotations $H_B$ and $H_F$ used in identity~\eqref{identBF}, deferred from Section~\ref{secmodel} for compactness. Recall from Section~\ref{secmodel} the SVD $M=\Xi_r L_r V_r'$ of $BF'$ and the matrices $S_B=N^{-1}B'B$ and $S_f=T^{-1}F'F$. Let $J\in\R^{r\times r}$ denote the diagonal matrix of common eigenvalues of $S_B^{1/2}S_f S_B^{1/2}$ and $S_f^{1/2}S_B S_f^{1/2}$. Let $G_B$ be the $r\times r$ matrix whose columns are eigenvectors of $S_B^{1/2}S_f S_B^{1/2}$ corresponding to eigenvalues $J$, and similarly let $G_F$ collect the eigenvectors of $S_f^{1/2}S_B S_f^{1/2}$. Define
\begin{equation}\label{eb.1}
H_B \;:=\; S_B^{-1/2}\,G_B,\qquad H_F \;:=\; S_f^{-1/2}\,G_F.
\end{equation}
A direct computation gives
\[
\frac{1}{NT}\,MM'\Bigl(\frac{1}{\sqrt N}BH_B\Bigr) \;=\; \Bigl(\frac{1}{\sqrt N}BH_B\Bigr)\,J,\qquad \Bigl(\frac{1}{\sqrt N}BH_B\Bigr)'\Bigl(\frac{1}{\sqrt N}BH_B\Bigr) \;=\; I_r,
\]
so $N^{-1/2}BH_B = \Xi_r$ and equivalently $N^{-1/2}B = \Xi_r H_B^{-1}$. The parallel computation on the factor side, using $(NT)^{-1}M'M = T^{-1}F S_B F'$, gives
\[
\frac{1}{NT}\,M'M\Bigl(\frac{1}{\sqrt T}FH_F\Bigr) \;=\; \Bigl(\frac{1}{\sqrt T}FH_F\Bigr)\,J,\qquad \Bigl(\frac{1}{\sqrt T}FH_F\Bigr)'\Bigl(\frac{1}{\sqrt T}FH_F\Bigr) \;=\; I_r,
\]
so $T^{-1/2}FH_F = V_r$ and $T^{-1/2}F = V_r H_F^{-1}$. Substituting both identities into $M=BF'$ yields $M = \sqrt{NT}\,\Xi_r H_B^{-1}(H_F')^{-1}V_r'$; comparing with the SVD $M=\Xi_r L_r V_r'$ gives $(H_F'H_B)^{-1} = (NT)^{-1/2}L_r$. This proves identity~\eqref{identBF}.

\section{General Theorems}\label{sec:genthm}

\subsection{Preliminaries: notation and probabilistic tools}\label{sec:prelim}
We follow the notation of Sections~\ref{secmodel} and~\ref{secfpca}: for $r\ge 1$, $M=BF'=\Xi_r L_r V_r'$ is the SVD of the signal, $\nu_{\min}=\lambda_r(N^{-1}B'B)$, and $\widehat\Xi_R=(\widehat\Xi_r,\widehat\Xi_{-r})$, $\widehat V_R=(\widehat V_r,\widehat V_{-r})$, and $\widehat B=\sqrt{N}\,\widehat\Xi_R$ collect the empirical singular vectors of $X$. In addition, $\widehat\xi_l\in\R^N$ and $\widehat v_l\in\R^T$ denote the individual left and right singular vectors of $X$ associated with its $l$-th largest singular value $\widehat\lambda_l$; $\widehat B_r=\sqrt{N}\,\widehat\Xi_r$ and $\widehat B_{-r}=\sqrt{N}\,\widehat\Xi_{-r}$ are the corresponding blocks of $\widehat B$; and, for the noise matrix $U=\Sigma_e^{1/2}E$, $u_{0,k}\in\R^N$ and $w_{0,k}\in\R^T$ denote its $k$-th left and right singular vectors. By Assumption~\ref{aspt:X}(iii), with probability approaching one, the singular values of $BF'$ are of order
$$
\nu_1(L_r)\asymp \nu_{r}( BF')\asymp \nu_{1}( BF')\asymp \sqrt{NT} \nu_{\min}^{1/2}\asymp \nu_M\sqrt{T},
$$
so that $\nu_{\min}^{-1} \asymp  N\nu_M^{-2}$.

Under Assumption~\ref{aspt:X}(i), the factor matrix $F$ --- and with it $M=BF'$, its singular values, and the singular vectors $\Xi_r$ and $V_r$ --- is random but independent of the noise $E$, so the conditional law of $E$ given $F$ coincides with its unconditional law. All random-matrix estimates for $E$ are therefore applied conditionally on $F$, with constants that do not depend on the realization of $F$; the proofs of Lemmas~\ref{lem:XiU} and~\ref{lem:Wkanis} below spell out this conditioning step once, and subsequent proofs use it without further comment. Throughout, we say an event holds \emph{with high probability} if its complement has probability at most $T^{-K}$ for every fixed $K>0$ and all large $T$, with constants depending on $K$. Arguments that invoke the bounds of Assumption~\ref{aspt:X}(i) and~(iii) are carried out on the event
\[
\mathcal E_T \;:=\; \bigl\{\text{the bounds in Assumption~\ref{aspt:X}(iii) hold}\bigr\}\;\cap\;\bigl\{\max_{t\le T}\|f_t\|\le C_\delta\,\log T\,\bigr\},
\]
where $\delta>0$ is fixed and the constant $C_\delta$ is chosen so that $\Pr(\mathcal E_T)\ge 1-\delta$ for all large $T$, as Assumption~\ref{aspt:X}(i) and~(iii) permit; on $\mathcal E_T$ we have $\lambda_r(M)\ge c\,\nu_M\sqrt T>0$, so $\rank(M)=r$ and $\Xi_r$, $L_r$, $V_r$ are well defined. Each stochastic-domination bound $\Phi\prec\phi$ asserted in the proofs for quantities depending on $F$ is established in the following strong conditional form: for every $\varepsilon>0$ and $K>0$ there exists $C=C(\varepsilon,K,\delta)$, not depending on the realization of $F$, such that $\Pr\bigl(\{\Phi> T^{\varepsilon} C\phi\}\cap\mathcal E_T\bigr)\le T^{-K}$ for all large $T$. This form composes under union bounds over polynomially many events, which is how it is used within the proofs; and since $\delta>0$ is arbitrary, it implies $\Phi=O_P(T^{\varepsilon}\phi)$ for every fixed $\varepsilon>0$ --- that is, $\Phi\prec\phi$ in the sense defined in Section~1 --- unconditionally, which is the form asserted in the statements of our results. The factor $\log T$ in the bound on $\max_{t\le T}\|f_t\|$ is absorbed by $T^{\varepsilon}$, since $\log T=o(T^{\varepsilon})$ for every $\varepsilon>0$, and appears explicitly only through $\|F^c\|_{\infty}$ in Step~4 of the proof of Theorem~\ref{thinf}.

We will use the following result, which is Lemma S.5 of \cite{hu2024network}.
\begin{lemma}
\label{lem:condprod}
Suppose $A\in \reals^{n\times K}, B\in \reals^{K\times p}$ and $\rank(A)=\rank(B)=K$, then
\[
\lambda_{K}(AB)\geq \lambda_K(A)\lambda_K(B),\quad \lambda_1(AB)\leq \lambda_1(A)\lambda_1(B).
\]
\end{lemma}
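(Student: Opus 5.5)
The statement to prove is Lemma~\ref{lem:condprod}, a deterministic fact about singular values of a product of full-rank factors through a $K$-dimensional middle space. The plan is to use the variational (Courant--Fischer) characterization of extreme singular values.

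For the upper bound $\lambda_1(AB)\le\lambda_1(A)\lambda_1(B)$, write $\lambda_1(AB)=\|AB\|$ and apply submultiplicativity of the operator norm: $\|ABx\|\le\|A\|\,\|Bx\|\le\|A\|\,\|B\|\,\|x\|$ for every $x\in\R^p$. No rank assumption is needed here.

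For the lower bound, first use the rank conditions. Since $\rank(B)=K$ and $B$ has $K$ rows, $B$ has full row rank, so $BB'$ is invertible and $\lambda_K(B)>0$. Since $\rank(A)=K$ with $K$ columns, $A$ has full column rank, so $\|Ay\|\ge\lambda_K(A)\|y\|$ for all $y\in\R^K$. It follows that $\rank(AB)=K$, because $AB$ maps the row space of $B$ injectively. Next use the characterization of the $K$-th singular value through the transpose,
\[
\lambda_K(AB)=\lambda_K(B'A')=\min_{\|w\|=1,\;w\in\mathrm{col}(A)}\|B'A'w\|,
\]
where $\mathrm{col}(A)$ is the $K$-dimensional space spanned by the columns of $A$, on which $A'$ is injective. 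For a unit vector $w\in\mathrm{col}(A)$ we have $\|A'w\|\ge\lambda_K(A)$, since the nonzero singular values of $A'$ restricted to $\mathrm{col}(A)$ are those of $A$. Because $B'$ has full column rank $K$, we have $\|B'y\|\ge\lambda_K(B)\|y\|$ for every $y\in\R^K$. Chaining the two inequalities gives $\|B'A'w\|\ge\lambda_K(B)\lambda_K(A)$, and taking the minimum over $w$ gives the claim.

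The only delicate point is to pick, for each factor, the side on which it acts injectively, since the two factors are injective in different directions. Working with $B'A'$ resolves this: $A'$ is restricted to $\mathrm{col}(A)$, and $B'$ is injective on all of $\R^K$.

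An equivalent route uses eigenvalues. Note that $\lambda_K(AB)^2=\lambda_K\bigl(B'A'AB\bigr)$, and that $A'A\succeq\lambda_K(A)^2I_K$. Hence $B'A'AB\succeq\lambda_K(A)^2B'B$, and $\lambda_K(B'B)=\lambda_K(B)^2$. Weyl/Loewner monotonicity of ordered eigenvalues then gives the same bound. I would present this version, since it is the shortest.
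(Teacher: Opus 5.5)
Your proof is correct. The upper bound is operator-norm submultiplicativity, and both lower-bound arguments go through. The Courant--Fischer version works because the max--min characterization only needs one $K$-dimensional subspace, here $\mathrm{col}(A)$, on which $\|B'A'w\|\ge\lambda_K(B)\lambda_K(A)$. The Loewner version works because congruence preserves order, so $B'A'AB\succeq\lambda_K(A)^2B'B$, and ordered eigenvalues are monotone in the Loewner order.

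The paper gives no argument of its own: it defers entirely to Lemma~S.5 of \cite{hu2024network}. Your proposal is therefore a self-contained replacement for an external citation rather than a competing route. It costs a few lines and removes the dependence on another paper's supplement.

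Your Loewner argument also makes something explicit that the citation hides. The relation $A'A\succeq\lambda_{\min}(A'A)\,I_K$ holds for every $A$ with $K$ columns, so the lower bound $\lambda_K(AB)\ge\lambda_K(A)\lambda_K(B)$ needs no rank hypotheses at all. The assumption $\rank(A)=\rank(B)=K$ only makes the right-hand side positive. That positivity is exactly what the paper needs when it applies the lemma in Proposition~\ref{prop:H-invert}, with $A=H_B^{-1\prime}$ ($r\times r$) and $B=\Xi_r'\widehat\Xi_R$ ($r\times R$), to bound $\lambda_r(H')$ away from zero.
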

\begin{proof}
See \cite{hu2024network}, Lemma~S.5.
\end{proof}

The next lemma controls the alignment between the rank-$r$ signal subspace and the leading $k$ noise singular vectors. Together with Lemma~\ref{lem:Wkanis} below, it provides the ``orthogonality budget'' that drives the proof of Theorem~\ref{thm:main}(i).

\begin{lemma}\label{lem:XiU}
Under Assumptions~\ref{aspt:D0} and~\ref{aspt:X}(i), with $\Xi_r\in\R^{N\times r}$ the left singular vectors of $M$ and $U_k\in\R^{N\times k}$ the leading $k$ left singular vectors of $\Sigma_e^{1/2}E$, for every fixed $r,k=O(1)$ and every $\varepsilon>0$,
\[
\|U_k'\,\Xi_r\|\;\le\;T^{\varepsilon-1/2}\quad\text{with high probability}.
\]
The same bound holds for $\|W_k'\,V_r\|$ on the right side, where $V_r\in\R^{T\times r}$ are the right singular vectors of $M$ and $W_k\in\R^{T\times k}$ the leading $k$ right singular vectors of $\Sigma_e^{1/2}E$.
\end{lemma}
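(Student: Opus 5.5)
The plan is to condition on $F$ and reduce the claim to the anisotropic delocalization (isotropic eigenvector) estimate of \cite{knowles2017anisotropic} for the noise singular vectors, applied one deterministic direction at a time.

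\emph{Conditioning.} By Assumption~\ref{aspt:X}(i), $F$ is independent of $E$, and $B$ is deterministic. Hence $\Xi_r$, the left singular vectors of $M=BF'$, is a function of $F$ alone and is independent of $E$. On the event $\mathcal E_T$ we have $\rank(M)=r$, so $\Xi_r=(\xi_1,\ldots,\xi_r)$ is a well-defined deterministic orthonormal matrix once we condition on $F$. The conditional law of $E$ equals its unconditional law, so all estimates below may be applied with constants uniform in the realization of $F$.

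\emph{Key step: entrywise isotropic delocalization.} For each fixed $j\le k$ and each deterministic unit vector $\xi\in\R^N$, we need
\[
|\langle u_{0,j},\xi\rangle|^2\prec T^{-1}.
\]
This should follow from the anisotropic local law for the resolvent of $T^{-1}\Sigma_e^{1/2}EE'\Sigma_e^{1/2}$ near the rightmost edge $a_1$. It is the isotropic eigenvector delocalization result for the outlier-free sample covariance model, in the form given by Knowles and Yin's Theorem on eigenvector delocalization.

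The route is as follows. At a spectral parameter $z=\widehat\lambda_j^2/T+i\eta$ with $\eta=T^{-1+\varepsilon}$ (edge scale $T^{-2/3}$ also works), the spectral decomposition gives
\[
\operatorname{Im}\langle \xi,G(z)\xi\rangle\ \ge\ \eta^{-1}|\langle u_{0,j},\xi\rangle|^2/2.
\]
The local law in turn bounds $\operatorname{Im}\langle\xi,G\xi\rangle$ by $\operatorname{Im}\langle\xi,\Pi(z)\xi\rangle+T^{\varepsilon}\Psi(z)\prec O(1)$. Combining the two yields $|\langle u_{0,j},\xi\rangle|^2\prec \eta= T^{-1+\varepsilon}$.

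The assumptions needed for this step are all available. The regular right edge comes from Assumption~\ref{aspt:D0}(ii), and $\phi\neq1$ from Assumption~\ref{aspt:X}(ii); the latter is exactly what keeps the edge regular and away from the hard edge. The local law is stated for moment bounds of all orders; subexponential tails (Assumption~\ref{aspt:D0}(i)) supply these moments. Because the local-law estimates hold with high probability, with $T^{-K}$ tails, they survive the conditioning on $F$.

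\emph{Assembling the matrix bound.} Apply the scalar bound to each pair $(u_{0,j},\xi_l)$, $j\le k$, $l\le r$, which is only $kr=O(1)$ pairs. A union bound then gives each entry of $U_k'\Xi_r$ at most $T^{\varepsilon/2-1/2}$ with high probability. Then
\[
\|U_k'\Xi_r\|\le\|U_k'\Xi_r\|_F\le \sqrt{kr}\,T^{\varepsilon/2-1/2}\le T^{\varepsilon-1/2}
\]
for large $T$. Since $\varepsilon$ is arbitrary, this is the claim. Note that $\xi_l$ is taken unconditional on $E$, so no uniformity over $\xi$ is needed.

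\emph{Right side.} The bound on $\|W_k'V_r\|$ is identical with the companion matrix $T^{-1}E'\Sigma_eE$, whose anisotropic local law is also provided by \cite{knowles2017anisotropic}. The one point to check is that $V_r$ depends on $F$, hence is independent of $E$.

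\emph{Main obstacle.} The hard part is the edge case $j=1,\ldots,k$. The top eigenvalues fluctuate at the edge on scale $T^{-2/3}$, so the spectral parameter must be chosen at the random location $\widehat\lambda_j^2/T$. This needs rigidity, so that the point lies in the domain where the local law holds uniformly, and hence a uniform-in-$z$ version of the law obtained via a lattice-plus-Lipschitz argument. I would cite the delocalization corollary of \cite{knowles2017anisotropic} directly rather than redo this argument.
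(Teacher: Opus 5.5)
Your proposal is correct and follows the paper's own proof: you condition on $F$ so that $\Xi_r$ and $V_r$ become fixed unit vectors independent of $E$, apply the Knowles--Yin isotropic delocalization (which the paper packages as Proposition~\ref{prop:loallaw}(3)) to each of the $kr=O(1)$ entries, and pass to the operator norm by a union bound, handling the right side by symmetry. Your resolvent sketch adds detail the paper only cites; in it, the spectral parameter should sit at the noise eigenvalue $\lambda_j(\Sigma_e^{1/2}E)^2/T$, not at $\widehat\lambda_j^2/T$, which in the paper's notation is an eigenvalue of $X$ and lies near $\nu_M^2$ for $j\le r$.
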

\begin{proof}
By Proposition~\ref{prop:loallaw}(3) applied entrywise. Each $\xi_i$ is a function of $F$, which is independent of $E$ by Assumption~\ref{aspt:X}(i); conditionally on $F$, the vector $\xi_i$ is a fixed unit vector while the conditional law of $E$ coincides with its unconditional law, so Proposition~\ref{prop:loallaw}(3) bounds each entry $u_{0,j}'\,\xi_i$ by $C T^{\varepsilon-1/2}$ with high probability, with constants uniform over unit vectors and hence over the realizations of $F$; averaging over $F$ yields the same unconditional bound. The operator-norm bound follows by a union bound over the $rk=O(1)$ entries and the equivalence between operator norm and maximum entry for matrices of fixed dimension. The right-side bound is symmetric.
\end{proof}

We also need an incoherence-type bound on noise-side singular vectors.

\begin{lemma}\label{lem:Wkanis}
Under Assumptions~\ref{aspt:D0} and~\ref{aspt:X}(i), for every fixed $k\le R-r$ and every $\varepsilon>0$, the $k$-th right singular vector $w_k\in\R^{T}$ of $\Sigma_e^{1/2}E$ satisfies, for any (possibly random) unit vector $v\in\R^{T}$ independent of $E$,
\[
|w_k'\,v|\;\le\;T^{\varepsilon-1/2}\quad\text{with high probability}.
\]
Consequently, with $W_k=[w_1,\ldots,w_k]\in\R^{T\times k}$ and $V_r\in\R^{T\times r}$ the right singular vectors of the rank-$r$ signal $M$,
\[
\|V_r'\,W_k\|\;\le\;T^{\varepsilon-1/2}\quad\text{with high probability},
\]
and, on the event $\mathcal E_T$ (on which $\|M\|\le C\nu_M\sqrt{T}$ by Assumption~\ref{aspt:X}(iii)), for the signal matrix $M\in\R^{N\times T}$,
\[
\|M\,W_k\|\;\le\;\|L_r\|\,\|V_r'\,W_k\|\;\le\;\nu_M T^{\varepsilon}\quad\text{with high probability}.
\]
\end{lemma}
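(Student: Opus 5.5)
The plan is to derive all three claims from the anisotropic local law for $U=\Sigma_e^{1/2}E$, i.e.\ Proposition~\ref{prop:loallaw}(3), applied conditionally on $F$ exactly as in the proof of Lemma~\ref{lem:XiU}.

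\emph{Step 1: the scalar bound.} Since $E$ is independent of $F$, the conditional law of $E$ given $(F,v)$ coincides with its unconditional law. So conditionally on $F$ (and on $v$, which is independent of $E$), $v$ is a fixed deterministic unit vector in $\R^T$.

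Proposition~\ref{prop:loallaw}(3) gives the anisotropic delocalization bound for the right singular vector $w_k$ of $\Sigma_e^{1/2}E$:
\[
|w_k'v|\le C\,T^{\varepsilon-1/2}\quad\text{with high probability}.
\]
This holds for each fixed $k$ whose singular value sits at a regular edge or in a regular bulk, which Assumption~\ref{aspt:D0}(ii) and $\phi\ne1$ guarantee; for fixed $k$ these are top-edge eigenvectors.

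The constants do not depend on $v$. Hence the bound holds uniformly over the realizations of $(F,v)$, and integrating the conditional probability gives the unconditional high-probability bound. Replacing $\varepsilon$ by a slightly larger $\varepsilon$ absorbs $C$.

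\emph{Step 2: the matrix $V_r'W_k$.} The columns of $V_r$ are functions of $F$ alone and hence independent of $E$. Applying Step~1 to each pair $(v_i,w_j)$ with $i\le r$ and $j\le k$ bounds every entry of $V_r'W_k$. A union bound over the $rk=O(1)$ entries, together with $\|A\|\le \sqrt{rk}\,\max_{ij}|A_{ij}|$ for an $r\times k$ matrix, gives
\[
\|V_r'W_k\|\le T^{\varepsilon-1/2}
\]
with high probability, after enlarging $\varepsilon$.

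\emph{Step 3: the bound on $MW_k$.} Write $M=\Xi_rL_rV_r'$. Because $\Xi_r$ has orthonormal columns,
\[
\|MW_k\|=\|L_rV_r'W_k\|\le\|L_r\|\,\|V_r'W_k\|.
\]
On $\mathcal E_T$ we have $\|L_r\|=\lambda_1(M)\le C\nu_M\sqrt T$. Combining this with Step~2 gives $\|MW_k\|\le C\nu_M T^{\varepsilon}$, and the constant is absorbed into $T^{\varepsilon}$.

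The only delicate step is Step~1. It needs Proposition~\ref{prop:loallaw}(3) to apply to the fixed-index top singular vectors, with constants uniform over deterministic unit vectors, and it needs the conditioning on $F$ to be legitimate because $E$ is independent of $(F,v)$. Steps~2 and~3 are then routine bookkeeping.
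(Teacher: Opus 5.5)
Your proposal is correct and follows essentially the same route as the paper's proof. The scalar bound is Proposition~\ref{prop:loallaw}(3) applied conditionally on the independent vector $v$, with constants uniform over deterministic unit vectors; $\|V_r'W_k\|$ then follows from a union bound over the $O(1)$ entries, the columns of $V_r$ being functions of $F$; and $\|MW_k\|$ follows from $M=\Xi_rL_rV_r'$ with $\|L_r\|\le C\nu_M\sqrt T$ on $\mathcal E_T$, absorbing constants into $T^{\varepsilon}$ as you do.
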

\begin{proof}
The first claim is the anisotropic delocalization bound for $w_k$ given in Theorem~3.12 (and Remark~3.13) of \cite{knowles2017anisotropic}; see Proposition~\ref{prop:loallaw} (claim~3) below for the form used here. That bound is stated for deterministic unit vectors, with constants uniform over $v$; for a random $v$ independent of $E$, conditioning on $v$ leaves the law of $E$ unchanged, so the same bound holds conditionally with the same constants and hence unconditionally. The bound on $\|V_r'\,W_k\|$ follows by applying the first claim with $v$ ranging over the $r=O(1)$ columns of $V_r$ (which are functions of $F$ and hence independent of $E$ by Assumption~\ref{aspt:X}(i)) and assembling via a union bound. The bound on $\|MW_k\|$ then uses the SVD $M=\Xi_r L_r V_r'$ and $\|L_r\|\asymp \nu_M\sqrt{T}$ on $\mathcal E_T$.
\end{proof}

\subsection{Proof of Theorem \ref{thm:main}}

The proof uses local laws for the noise matrix under factor models, stated later in Proposition \ref{prop:loallaw}.

\noindent\textbf{Claim~(i): singular values.}
Throughout, $\xi_1,\ldots,\xi_r$ denote the columns of $\Xi_r\in\R^{N\times r}$ (the left singular vectors of $M$), $u_1,\ldots,u_k$ the columns of $U_k\in\R^{N\times k}$ (the leading $k$ left singular vectors of $\Sigma_e^{1/2}E$), $w_1,\ldots,w_k$ the columns of $W_k\in\R^{T\times k}$ (the corresponding right singular vectors), and $\Lambda_k\in\R^{k\times k}$ the diagonal matrix of the leading $k$ singular values of $\Sigma_e^{1/2}E$. The rank-$k$ truncated SVD identity $\Sigma_e^{1/2}E\,W_k=U_k\Lambda_k$, equivalently $(\Sigma_e^{1/2}E)'\,U_k=W_k\Lambda_k$, will be used repeatedly.

\emph{Upper bound.} The bound $\widehat\lambda_{r+k}\le \lambda_k(\Sigma_e^{1/2}E)$ is immediate from Weyl's inequality for singular values applied to $X=M+\Sigma_e^{1/2}E$:
\[
\widehat\lambda_{r+k}=\sigma_{r+k}(M+\Sigma_e^{1/2}E)\;\le\;\sigma_{r+1}(M)+\sigma_k(\Sigma_e^{1/2}E)\;=\;0+\lambda_k(\Sigma_e^{1/2}E),
\]
since $\rank(M)=r$ implies $\sigma_{r+1}(M)=0$.

\emph{Lower bound.} Consider the subspace $S=\mathrm{span}\{\xi_1,\ldots, \xi_r,u_1,\ldots,u_k\}\subset\R^{N}$, of dimension $r+k$ with high probability by Lemma~\ref{lem:XiU}. By Courant--Fischer,
\[
\lambdahat_{r+k}\geq \min_{z\in S, \|z\|=1} \|(M+\Sigma^{1/2}_e  E)'z\|.
\]
Every $z\in S$ admits the representation $z=\Xi_r x+U_ky$ with $x\in\R^r$, $y\in\R^k$. Meanwhile, consider 
$P_r=\Xi_r\Xi_r', P_c=I-P_r$, which are the projection onto the subspace of $\Xi_r$ and its complement respectively. We note that 
\[
z=\Xi_rx+U_ky=
\Xi_rx+(\Xi_r\Xi_r')U_ky+P_c U_k y=
\Xi_r(x+\Xi_r'U_ky)+P_c U_k y. 
\]
In other words, we can write all $z\in S$ as $z=\Xi_r x+P_c U_k y$ for some $x$ and $y$.

\emph{Step 1: orthogonality budget.} By Lemma~\ref{lem:XiU}, $\|U_k'\Xi_r\|\le T^{\varepsilon-1/2}$ with high probability, so
\begin{align*}
1=\|z\|^2=\|\Xi_r x+P_c U_ky\|^2=\|x\|^2+\|P_c U_k y\|^2.
\end{align*}
Meanwhile, we note that by Lemma~\ref{lem:XiU}, $\|P_r U_k\|=\|\Xi_r' U_k\|=O_p( T^{\varepsilon-1/2})$
\[
\|P_c U_k y\|^2=\|y\|^2-\|\Xi_r'U_ky\|^2
\]
which yields
\begin{equation}\label{eq:supp:budget}
 |\|x\|^2+\|y\|^2-1|\;\leq\; C\,T^{2\varepsilon-1}\,\|y\|^2\;\le\;C\,T^{2\varepsilon-1}.
\end{equation}

\emph{Step 2: lower bound on $\|X'z\|^2$.} Decompose
\begin{align*}
X'z&=(M+\Sigma_e^{1/2}E)'(\Xi_r x+ U_k y-P_r U_ky)\\
&=\underbrace{(M+\Sigma_e^{1/2}E)'\Xi_r x}_{A\,\in\,\R^T}+\underbrace{W_k\Lambda_k y}_{D\,\in\,\R^T}-\underbrace{(\Sigma_e^{1/2} E)'P_rU_k y}_{B\,\in\,\R^T},
\end{align*}
where 
we used that $M'(I-P_r)=0$ and 
the second piece uses $(\Sigma_e^{1/2}E)'U_k=W_k\Lambda_k$. We bound each piece.

\emph{(a) $\|A\|$.} Using $\Xi_r' M=L_r V_r'$, $M'\Xi_r=V_r L_r$, and $\|(\Sigma_e^{1/2}E)'\Xi_r\|\le\|\Sigma_e^{1/2}E\|\asymp\sqrt{T}$:
\[
\|A\|\;\ge\;\|V_r L_r x\|-\|(\Sigma_e^{1/2}E)'\Xi_r\,x\|\;\ge\;\nu_M\sqrt{T}\,\|x\|-C\sqrt{T}\,\|x\|\;\ge\;c\,\nu_M\sqrt{T}\,\|x\|
\]
for $\nu_M$ large, hence $\|A\|^2\ge c\,\nu_M^2 T\,\|x\|^2$.
Meanwhile, $\|A\|\le \|x\|\|M+\Sigma_e^{1/2}E\|=O(T \|x\|)$.

\emph{(b) $\|D\|$.} Since $W_k$ has orthonormal columns, $\|D\|^2=\|\Lambda_k y\|^2\ge \lambda_k(\Sigma_e^{1/2}E)^2\,\|y\|^2$.

\emph{(c) $\|B\|$.} By Lemma~\ref{lem:XiU},
$\|B\|\;\le\;
\|\Sigma_e^{1/2}E\|\|P_r U_k\|
\,\|y\|\;=O(T^{\varepsilon}\|y\|)$.

\emph{(d) Cross terms.} The cross terms involving $D$ are controlled using Lemma~\ref{lem:Wkanis} ($\|V_r'W_k\|\le T^{\varepsilon-1/2}$ with high probability) and Lemma~\ref{lem:XiU}:
\begin{align*}
|\langle A,D\rangle|&=|x'\Xi_r'(M+\Sigma_e^{1/2}E)W_k\Lambda_k y|\;\le\;\|x\|\,\|y\|\,\|(M+\Sigma_e^{1/2}E)W_k\|\,\|\Lambda_k\|,\\
\|(M+\Sigma_e^{1/2}E)W_k\|&\le \|MW_k\|+\|\Sigma_e^{1/2}E\,W_k\|\;\le\;\nu_MT^{\varepsilon}+\sqrt{T}\;\le\;C\sqrt{T}\,T^{\varepsilon},
\end{align*}
where the high-probability bound $\|MW_k\|\le\nu_M T^{\varepsilon}$ is from Lemma~\ref{lem:Wkanis}. Hence $|\langle A,D\rangle|\le C T^{1+\varepsilon}\|x\|\|y\|$.
Meanwhile, we have 
\[|\langle A,B\rangle|
\leq \|A\|\|B\|=O(T^{1+\varepsilon}\|x\|\|y\|).
\]
Moving on, using $W_k'(\Sigma_e^{1/2} E)'=(U_k\Lambda_k)'$ we find 
\[
\langle D, B\rangle=y'\Lambda_k W_k'(\Sigma_e^{1/2} E)' P_r U_k y=y'\Lambda_k^2 U_k'\Xi_r\Xi_r' U_k y.
\]
Therefore by Lemma~\ref{lem:XiU}, 
\[
|\langle D, B\rangle|\leq \|\Lambda_k\|^2 \|U_k'\Xi_r\|^2\|y\|^2\prec T\cdot T^{2\varepsilon-1}\|y\|^2=T^{2\varepsilon}\|y\|^2. 
\]
Combining (a)--(d), since $X'z=A+D-B$,
\begin{align*}
\|X'z\|^2\;&
=\|A\|^2+\|D\|^2+\|B\|^2-2\langle A, B\rangle+2\langle A, D\rangle-2\langle D, B\rangle\\
&\ge\;\nu_M^2 T\,\|x\|^2+\lambda_k(\Sigma_e^{1/2}E)^2\,\|y\|^2+0-C T^{1+\varepsilon}\,\|x\|\|y\|-C T^{2\varepsilon}\|y\|^2.
\end{align*}
\emph{Step 3: minimization.} Let $a=\|x\|,\;l_k=\lambda_k(\Sigma^{1/2}_eE)\asymp\sqrt{T}$; using \eqref{eq:supp:budget}, that is, lower-bound $\|y\|^2$ by $1-a^2$, upper-bound $\|y\|^2$ by $1-a^2+CT^{2\varepsilon-1}$ when it appears with negative sign,
\begin{align*}
\|X'z\|^2
&\geq \nu_M^2T a^2+l_k^2(1-a^2)-C T^{1+\varepsilon}\,a\sqrt{1+C T^{2\varepsilon-1}-a^2}-CT^{2\varepsilon}(1+C T^{2\varepsilon-1}-a^2).
\end{align*}
Define
\[
f(a)=(\nu_M^2 T-l_k^2+C T^{2\varepsilon})a^2+l_k^2-CT^{1+\varepsilon} a\sqrt{1+CT^{2\varepsilon-1}-a^2}-CT^{2\varepsilon}(1+CT^{2\varepsilon-1}).
\]
Note that 
$f$ has derivative
\[
f'(a)=2a(\nu_M^2 T-l_k^2+CT^{2\varepsilon})-CT^{1+\varepsilon}\frac{1+CT^{2\varepsilon-1}-2a^2}{\sqrt{1+CT^{2\varepsilon-1}-a^2}}.
\]
Since $\nu_M^2 T-l_k^2+CT^{2\varepsilon}\asymp \nu_M^2 T$ for $\nu_M\to \infty$, $f'(a)>0$ whenever $a>\nu_M^{-2}T^{\varepsilon}$; hence the minimizer $a^*\leq \nu_M^{-2}T^{\varepsilon}$, and
\begin{equation}\label{eq:svgap}
\lambdahat_{r+k}^2\geq f(a^*)\geq l_k^2-C\nu_M^{-2} T^{1+2\varepsilon}-2CT^{2\varepsilon}\geq \lambda_k(\Sigma_e^{1/2}E)^2-O(T^{1+2\varepsilon}\nu_M^{-2}),
\end{equation}
since $T^{2\varepsilon}=O(T^{2\varepsilon}\cdot\nu_M^{-2}T)$ throughout the range $\nu_M=O(\sqrt{N})$ (the inequality $1\leq C\nu_M^{-2}T$ holds at $\nu_M\leq C\sqrt{T}$, with equality up to constants at the strong-factor boundary); as $\varepsilon>0$ is arbitrary, this is the claimed bound $\lambdahat_{r+k}^2\ge\lambda_k(\Sigma_e^{1/2}E)^2-O_P(T^{\varepsilon}\,\nu_M^{-2}T)$ for every $\varepsilon>0$.
Combined with the Weyl upper bound, this proves \[
0\leq \lambda_k(\Sigma_e^{1/2}E)^2-\widehat\lambda_{r+k}^2\prec \nu_M^{-2}T,
\]
uniformly over $\nu_M=O(\sqrt{N})$, including the strong-factor boundary $\nu_M\asymp\sqrt{N}$. 

\noindent\textbf{Claim~(iii): near-orthogonality, left singular vectors.} We first prove claim (iii) and then claim (ii), since the proof of the latter uses the former.

\textbf{Step 1. Block transformation of the eigenvector equation.}
Recall the SVD $M=\Xi_r L_r V_r'$ from the Preliminaries; we write $B_r:=L_r V_r'\in\R^{r\times T}$ for compactness, so $M=\Xi_r B_r$ and $\|B_r\|=\|L_r\|\asymp\sqrt{T}\,\nu_M$. Since $B$ shares its left singular space with $\Xi_r$ and all $r$ eigenvalues of $B'B/N$ are of order $\nu_{\min}\asymp \nu_M^2/N$, showing $\|B\|_{\mathrm{F}}^{-1}\|B'\Xihat_{-r}\|=O(T^{\varepsilon}\nu_M^{-2})$ is equivalent to showing $\|\Xi_r'\xihat_k\|=O(T^{\varepsilon}\nu_M^{-2})$ for all $k\in [r+1,R]$.

As in the display following Theorem~\ref{thm:main}, let $\Xi_c\in\R^{N\times(N-r)}$ be the orthonormal completion of $\Xi_r$, so $\Xi_r\Xi_r'+\Xi_c\Xi_c'=I_N$, and decompose $\xihat_k=\Xi_r x_k+\Xi_c y_k$ with
\[
x_k=\Xi_r' \xihat_k\in\R^r,\quad
y_k=\Xi_c' \xihat_k\in\R^{N-r}.
\]
Then $\|x_k\|^2+\|y_k\|^2=1$ by orthonormality of $\xihat_k$ and the identity $\Xi_r\Xi_r'+\Xi_c\Xi_c'=I_N$. The goal is to bound $\|x_k\|$ for $r+1\leq k\leq R$.

By claim 1, with high probability $ \lambdahat_k\asymp\sqrt{T}$. We write $E_r=\Xi_r' \Sigma^{1/2}_e  E,E_c=\Xi_c' \Sigma^{1/2}_e  E$. Then we have 
\[
(M+\Sigma^{1/2}_e  E)(M+\Sigma^{1/2}_e  E)'(\Xi_rx_k+\Xi_cy_k)=\lambdahat_k^2\cdot (\Xi_rx_k+\Xi_cy_k)
\]
Because $\Xi_r\bot \Xi_c$, respectively left multiply by $\Xi_r$ and $\Xi_c$ to both sides leads to:
\begin{equation}\label{eqQ}
Q_r:=( B_r + E_r),\quad
\begin{bmatrix}
Q_rQ_r' &  Q_rE'_c\\
 E_cQ_r' &  E_cE'_c
\end{bmatrix}\begin{bmatrix}
x_k\\
y_k
\end{bmatrix}=
\begin{bmatrix}
\lambdahat_k^2x_k\\
\lambdahat_k^2 y_k
\end{bmatrix}
\end{equation}
First row gives us 
\begin{equation}\label{eq2.1x}
x_k=- (Q_rQ_r'-\lambdahat_k^2 I)^{-1}Q_rE_c'y_k
\end{equation}
The inversion is feasible because with high probability, the following holds with some constant
\[
\lambda_r(Q_r)\geq \lambda_r( B_r)-\|E_r\|\geq \frac{1}{C}\nu_M\sqrt{T}-C\sqrt{T}\geq \frac{1}{2C}\nu_M\sqrt{T}
\]
for some constant $c$, while $\lambdahat_k\leq \|\Sigma^{1/2}_e  E\|=C\sqrt{T}$ with high probability. So $Q_rQ_r'-\lambdahat_k^2 I\succ 0$ with high probability.
Equation~\eqref{eq2.1x} also delivers an a priori bound, valid for every $k\in\{r+1,\ldots,R\}$ and free of any induction: the two displays above give $\lambda_{\min}(Q_rQ_r'-\lambdahat_k^2 I)\ge c\,\nu_M^2T$ with high probability, while $\|Q_r\|\le\|B_r\|+\|E_r\|\le C\nu_M\sqrt{T}$ on $\mathcal E_T$ and $\|E_c'y_k\|\le\|E_c\|\le C\sqrt{T}$, so that
\begin{equation}\label{eq:xk-apriori}
\|x_k\|\;\le\;\bigl\|(Q_rQ_r'-\lambdahat_k^2 I)^{-1}\bigr\|\,\|Q_r\|\,\|E_c'y_k\|\;\le\;C\,\nu_M^{-1},
\qquad
\|y_k\|^2=1-\|x_k\|^2\;\ge\;1-C\nu_M^{-2}.
\end{equation}
Substituting \eqref{eq2.1x} into the second row gives
\begin{align*}
\lambdahat_k^2y_k= E_c Q_r'x_k+ E_cE_c'y_k=- E_cQ_r'(Q_rQ_r'-\lambdahat_k^2 I)^{-1}Q_rE_c'y_k+ E_cE_c'y_k. 
\end{align*}

\textbf{Step 2: the case $k=r+1$.}  
Left multiply $y'$ on both sides, and rearrange, 
\begin{equation}
\label{tmp:thm1}
 y_k'E_cQ_r'(Q_rQ_r'-\lambdahat_k^2 I)^{-1}Q_rE_c'y_k=
 y_k'E_cE_c'y_k-\lambdahat_k^2\|y_k\|^2
\end{equation}
 We observe that the left-hand side is non-negative because $(Q_rQ_r'-\lambdahat_k^2 I)^{-1}$ is positive definite with high probability.
Moreover, since $\lambda_1(Q_rQ_r'-\lambdahat_k^2 I)\leq CT\nu_M^2$ with high probability, we have the positive-semidefinite inequality $(Q_rQ_r'-\lambdahat_k^2 I)^{-1}\succeq \frac{1}{CT\nu_M^2}\,I$, whence
\begin{equation}
\label{tmp:ineq1}
\frac{1}{CT\nu_M^2}\|Q_rE_c'y_k\|^2\leq y_k'E_cE_c'y_k-\lambdahat_{k}^2\|y_k\|^2.
\end{equation}
We argue by induction, beginning with the case $k=r+1$.
Thus, by claim (i), we have
\[
\frac{1}{CT\nu_M^2}\|Q_r E_c' y_{r+1}\|^2\leq \lambda_1(\Sigma^{1/2}_e  E)^2-\lambdahat_{r+1}^2= O(T^{1+2\varepsilon}\nu_M^{-2}). 
\]
So $\|Q_r E_c' y_{r+1}\|\leq C T^{1+\varepsilon}$.
\[
x_{r+1}=-(Q_rQ_r'-\lambdahat_{r+1}^2 I)^{-1}Q_r E_c'y_{r+1}=O(T^{\varepsilon}\nu_M^{-2}),
\]
which proves the claim for $r+1$.

\textbf{Step 3: induction to other cases.} Throughout this step the index $i$ ranges over $1,\dots,R-r$. Since $R-r=O(1)$ by assumption, all accumulated Gram--Schmidt errors satisfy $C_R\cdot (\prec \nu_M^{-2})$ with $C_R$ a constant depending only on the bounded difference $R-r$; we absorb $C_R$ into the implicit constant in $\prec$ below.
Next, suppose the bound holds through $k=r+i-1$. For $k=r+i$, orthonormality implies that, for any $j\leq i-1$,
\[
0=\xihat_{r+j}'\xihat_{r+i}=y_{r+j}' y_{r+i}+x_{r+j}'x_{r+i}\Rightarrow y_{r+j}'y_{r+i}=-x_{r+j}'x_{r+i}=O(T^{\varepsilon}\nu_M^{-2}),
\]
where we used the induction $\|x_{r+j}\|=O(T^{\varepsilon}\nu_M^{-2})$  and  $\|x_{r+i}\|\leq 1$.
In other words, $y_{r+j}$ is close to orthogonal to $y_{r+i}$. 
Consider the exact Gram-Schmidt transform: set $\tilde y_{r+1}:=y_{r+1}/\|y_{r+1}\|$ and, recursively for $i\ge 2$,
\[
\tilde{y}_{r+i}:=\frac{y_{r+i}-\sum_{j\leq i-1}\langle y_{r+i} ,\tilde{y}_{r+j}\rangle \tilde{y}_{r+j}}{\|y_{r+i}-\sum_{j\leq i-1}\langle y_{r+i} ,\tilde{y}_{r+j}\rangle \tilde{y}_{r+j}\|},
\]
so that $\tilde y_{r+1},\ldots,\tilde y_{r+i}$ are exactly orthonormal.
Then $\|\tilde{y}_{r+i}-y_{r+i}\|= O(T^{\varepsilon}\nu_M^{-2})$: by induction on $i$, each coefficient obeys $\langle y_{r+i},\tilde y_{r+j}\rangle=\langle y_{r+i},y_{r+j}\rangle+O(T^{\varepsilon}\nu_M^{-2})=O(T^{\varepsilon}\nu_M^{-2})$, the denominator equals $\|y_{r+i}\|+O(T^{\varepsilon}\nu_M^{-2})=1+O(T^{\varepsilon}\nu_M^{-2}+\nu_M^{-2})$ by the a priori bound \eqref{eq:xk-apriori} (which requires no induction), and $R-r=O(1)$ bounds the accumulation. Consequently,
\[
\bigl|\|E_c'\tilde{y}_{r+i}\|^2-\|E_c' y_{r+i}\|^2\bigr|\leq 2\|E_c\|^2\,\|\tilde{y}_{r+i}-y_{r+i}\|\leq C\|\Sigma^{1/2}_e  E\|^2 T^{\varepsilon}\nu_M^{-2}=O(T^{1+\varepsilon}\nu_M^{-2}).
\]
Since $i\leq R-r=O(1)$ and $\tilde{y}_{r+1:r+i}$ consists of $i$ exactly orthonormal vectors,
\begin{align*}
 \|E'_cy_{r+1:r+i}\|_{\mathrm F}^2-O(T^{1+\varepsilon}\nu_M^{-2})&\leq \|E'_c \tilde{y}_{r+1:r+i}\|_{\mathrm F}^2\\
 &\leq \sum_{j\leq i}\lambda_{j}(E_c)^2\leq
 \sum_{j\leq i}\lambda_{j}(\Sigma^{1/2}_e  E)^2\leq
\sum_{j\leq i}\lambdahat^2_{j+r}+O(T^{1+2\varepsilon}\nu_M^{-2}),
\end{align*}
where the second line uses, in order, the Ky Fan extremal inequality for the orthonormal collection $\tilde y_{r+1:r+i}$, the bound $\lambda_j(E_c)\le\lambda_j(\Sigma_e^{1/2}E)$ (as $E_c=\Xi_c'\Sigma_e^{1/2}E$ with $\|\Xi_c\|=1$), and the rearranged form of \eqref{eq:svgap}.
So we have 
\[
\|E'_cy_{r+1:r+i}\|_{\mathrm F}^2-\sum_{j\leq i}\lambdahat^2_{j+r}=O(T^{1+2\varepsilon}\nu_M^{-2})
\]
Meanwhile,
\begin{align*}
\sum_{j\leq i}\|x_{r+j}\|^2&=\sum_{j\leq i}\|(Q_{r}Q_{r}'-\lambdahat_{r+j}^2 I)^{-1}Q_{r}E_c'y_{r+j}\|^2\\
&\lesssim
\frac{1}{T^2\nu_M^4} \sum_{j\leq i}\|Q_{r}E_c'y_{r+j}\|^2\\
&\lesssim
\frac{1}{T\nu_M^2}\sum_{j\leq i}(\|E_c'y_{r+j}\|^2-\lambdahat^2_{r+j}\|y_{r+j}\|^2)\\
&\leq
\frac{1}{T\nu_M^2}\sum_{j\leq i}(\|E_c'y_{r+j}\|^2-\lambdahat^2_{r+j})+O(\nu_M^{-4})\\
&=\frac{1}{T\nu_M^2}
\left(\|E_c'y_{r+1:r+i}\|_{\mathrm F}^2-\sum_{j\leq i}\lambdahat^2_{r+j}\right)+O(\nu_M^{-4})\lesssim T^{2\varepsilon}\nu_M^{-4}.
\end{align*}
Here the second step uses the resolvent bound below \eqref{eq2.1x}; the third is \eqref{tmp:ineq1}, applied at $k=r+j$; and the fourth uses $\lambdahat_{r+j}^2\|x_{r+j}\|^2\le CT\nu_M^{-2}$ from \eqref{eq:xk-apriori}.
From this we conclude that $\|x_{r+i}\|=O(T^{\varepsilon}\nu_M^{-2})$.

\noindent\textbf{Claim~(iii): near-orthogonality, right singular vectors.}
The corresponding bound for the right side, $\|F\|_{\mathrm{F}}^{-1}\,\|F'\widehat V_{-r}\|\prec \nu_M^{-2}$, follows by symmetry. Specifically, applying the argument above to the transposed matrix $X'=M'+(\Sigma_e^{1/2}E)'\in\R^{T\times N}$ exchanges the roles of left and right singular vectors, mapping $\Xi_r\mapsto V_r$, $V_r\mapsto \Xi_r$, $U_k\mapsto W_k$, and $W_k\mapsto U_k$. The orthogonality budget $\|W_k'V_r\|\prec T^{\varepsilon-1/2}$ is the right-side counterpart of Lemma~\ref{lem:XiU}, also supplied by Proposition~\ref{prop:loallaw}~(3). All local-law inputs from \cite{knowles2017anisotropic} are symmetric in the left/right convention under our setting $N\asymp T$, $\phi\ne 1$, so the entire argument transposes verbatim, yielding $\|V_r'\widehat v_k\|\prec T^{\varepsilon}\nu_M^{-2}$ for each $k\in[r+1,R]$ and consequently $\|F\|_{\mathrm{F}}^{-1}\|F'\widehat V_{-r}\|\prec \nu_M^{-2}$ via the analogue of the equivalence between $\|B\|_{\mathrm{F}}^{-1}\|B'\widehat\Xi_{-r}\|$ and $\|\Xi_r'\widehat\xi_k\|$ established in Step~1.

\noindent\textbf{Claim~(ii): incoherence of extra eigenvectors.}
Let $u_{1},u_2,\ldots\in\R^N$ and $w_{1},w_2,\ldots\in\R^T$ denote the left and right singular vectors of $\Sigma_e^{1/2}E$ ordered by decreasing singular value. We first prove the bound $\|\eta'\widehat\xi_k\|\prec\nu_M^{-1}$ for the left singular vectors of $X$, for each $k\in\{r+1,\ldots,R\}$; the symmetric bound for $\widehat v_k$ follows by transposition (cf.\ Step~3 below).

\textbf{Step 1: expansion of $\xihat_k$ in terms of $u_i$.}
The column space of $X=M+\Sigma_e^{1/2}E$ is contained in the sum of the column spaces of $M$ (which equals $\mathrm{span}(\Xi_r)$) and of $\Sigma_e^{1/2}E$ (which is spanned by $u_1,\ldots,u_{m}$, where $m:=\min(N,T)$). Hence each empirical left singular vector $\widehat\xi_k\in\R^N$ admits the decomposition $\widehat\xi_k=\Xi_r x_k+\Xi_c y_k$ from claim~(iii), and we expand $\Xi_c y_k$ in the directions $u_i$, keeping the residual explicit:
\begin{equation}\label{eq:xihat-exp}
\Xi_c y_k=\sum_{i\in [m]} a_{k,i}\, u_i+\rho_k,\qquad a_{k,i}:=\langle u_i,\Xi_c y_k\rangle,\qquad \rho_k:=(I-P_U)\,\Xi_c y_k,
\end{equation}
where $P_U$ denotes the orthogonal projection onto $\mathrm{col}(\Sigma_e^{1/2}E)$, and we set $a_{k,i}:=0$ and $\lambda_i(\Sigma_e^{1/2}E):=0$ for $m<i\le N$. Since $\phi\ne1$, the deformed MP law keeps $\lambda_m(\Sigma_e^{1/2}E)$ bounded away from zero (of order $\sqrt T$) with high probability, so $\rank(\Sigma_e^{1/2}E)=m$, the vectors $u_1,\ldots,u_m$ form an orthonormal basis of $\mathrm{col}(\Sigma_e^{1/2}E)$, and the expansion \eqref{eq:xihat-exp} holds exactly on this event. When $\phi<1$, so that $m=N$, $\mathrm{col}(\Sigma_e^{1/2}E)=\R^N$ and $\rho_k=0$. When $\phi>1$, the vectors $\{u_i\}_{i\le T}$ span only the $T$-dimensional $\mathrm{col}(\Sigma_e^{1/2}E)$, and $\rho_k$ need not vanish; we now bound it for $k\in\{r+1,\ldots,R\}$, the range relevant for claim~(ii). Writing $\widehat\xi_k=\widehat\lambda_k^{-1}X\widehat v_k$ and using $(I-P_U)\Sigma_e^{1/2}E=0$, we have $(I-P_U)\widehat\xi_k=\widehat\lambda_k^{-1}(I-P_U)\,\Xi_r L_r V_r'\widehat v_k$, and hence, since $\rho_k=(I-P_U)\widehat\xi_k-(I-P_U)\,\Xi_r x_k$, the bounds $\widehat\lambda_k\asymp\sqrt{T}$ from claim~(i) together with Proposition~\ref{prop:loallaw}(1), $\|L_r\|\asymp\nu_M\sqrt{T}$ on $\mathcal E_T$, $\|V_r'\widehat v_k\|\prec\nu_M^{-2}$ from claim~(iii) (proved above), and $\|x_k\|=O(T^{\varepsilon}\nu_M^{-2})$ give
\begin{equation}\label{eq:rho-bound}
\|\rho_k\|\;\le\;\widehat\lambda_k^{-1}\,\|L_r\|\,\|V_r'\widehat v_k\|+\|x_k\|\;\prec\;\nu_M^{-1}.
\end{equation}
The residual is carried explicitly through the two places where the expansion \eqref{eq:xihat-exp} enters below: the Parseval identity in \eqref{tmp:Ai2}, and the delocalization decomposition of Step~2.

Recall from the induction used in the claim (iii),  where we have that
\begin{gather*}
x_i=O(T^{\varepsilon}\nu_M^{-2}),\qquad
\|y_{i}-\tilde{y}_i\|=O(T^{\varepsilon}\nu_M^{-2}),\qquad i,j\in \{r+1,\ldots,R\},\\
y'_iy_j=(\Xi_c y_i)'(\Xi_c y_j)=O(T^{\varepsilon}\nu_M^{-2})\quad (i\neq j).
\end{gather*}
In other words, $\{\Xi_c y_k\}_{k\in \{r+1,\ldots,R\}}$ is within distance $O(T^{\varepsilon}\nu_M^{-2})$ of a collection of orthonormal vectors. As a consequence, if we write $A_i=[a_{r+1,i},\ldots,a_{R,i}]'$, then the following holds for $i\le m$ with high probability:
\begin{align}
\notag
\|A_i\|^2=\sum_{j=r+1}^R \langle u_i, \Xi_c y_j\rangle^2&\leq
\sum_{j=r+1}^R \langle u_i, \Xi_c\tilde{y}_j\rangle^2+ O(T^{\varepsilon}\nu_M^{-2})\\
\label{tmp:Ai1}
&\leq \|u_i\|^2+O(T^{\varepsilon}\nu_M^{-2})=
1+O(T^{\varepsilon}\nu_M^{-2}).
\end{align}
For $m<i\le N$, the same bound is trivial because $A_i=0$ by convention.
Moreover,
\begin{equation}
\label{tmp:Ai2}
\sum_{i=1}^N\|A_i\|^2=\sum_{j=r+1}^R \sum_{i=1}^m a_{j,i}^2=
\sum_{j=r+1}^R\bigl(\|\Xi_c y_j\|^2-\|\rho_j\|^2\bigr)=(R-r)\pm O(T^{\varepsilon}\nu_M^{-2}),
\end{equation}
where the second equality is Parseval's identity on $\mathrm{col}(\Sigma_e^{1/2}E)$, and the last step uses $\|\Xi_c y_j\|^2=1-\|x_j\|^2=1-O(T^{2\varepsilon}\nu_M^{-4})$ together with $\|\rho_j\|^2\prec\nu_M^{-2}$ from \eqref{eq:rho-bound}, absorbed into the error term.
To continue, recall from \eqref{tmp:ineq1}
,\[
y_j'\Xi_c' \Sigma^{1/2}_e  EE'\Sigma^{1/2}_e  \Xi_c y_j=y_j' E_c E_c' y_j\geq \lambdahat_j^2 \|y_j\|^2,\qquad j\in\{r+1,\ldots,R\}.
\]
Next, recall claim (i) and $\|y_j\|^2=1-\|x_j\|^2\geq 1-O(T^{2\varepsilon}\nu_M^{-4})$. Since $(\Sigma_e^{1/2}E)'\rho_j=0$, we have $E'\Sigma_e^{1/2}\,\Xi_c y_j=\sum_{i\in[m]}a_{j,i}\,\lambda_i(\Sigma^{1/2}_e E)\,w_i$ exactly, so the residual does not enter the following quadratic form. We obtain, for $j\in\{r+1,\ldots,R\}$,
\begin{align*}
\sum_{i\in [N]} a_{j,i}^2\lambda_i^2(\Sigma^{1/2}_e  E)=y_j'\Xi_c' \Sigma^{1/2}_e  EE'\Sigma^{1/2}_e  \Xi_c y_j
&\geq \lambdahat_j^2 \|y_j\|^2\geq \lambda_{j-r}^2(\Sigma^{1/2}_e  E)\|y_j\|^2-O(T^{1+2\varepsilon}\nu_M^{-2})\\
&\geq \lambda_{j-r}^2(\Sigma^{1/2}_e  E)-O(T^{1+2\varepsilon}\nu_M^{-2}),
\end{align*}
where the middle inequality is \eqref{eq:svgap}, applied with $k=j-r$.

Summing these equations over $j\in\{r+1,\ldots,R\}$ gives
\[
\sum_{i\in [N]} \|A_{i}\|^2\lambda_i^2(\Sigma^{1/2}_e  E)\geq \sum_{k=1}^{R-r}\lambda_k^2(\Sigma^{1/2}_e  E)-O(T^{1+2\varepsilon}\nu^{-2}_M).
\]
We rewrite the left-hand side as
$\sum_{i\leq R-r} \|A_{i}\|^2\lambda_i^2+\sum_{i>R-r} \|A_{i}\|^2\lambda_i^2$, abbreviating $\lambda_i:=\lambda_i(\Sigma^{1/2}_e  E)$ throughout this display, and move these terms to the right-hand side:
\begin{align*}
O(T^{1+2\varepsilon}\nu_M^{-2})&\geq
\sum_{k=1}^{R-r}\lambda_k^2-
\sum_{i=1}^{R-r}\|A_i\|^2\lambda_i^2-
\sum_{i=R-r+1}^N\|A_i\|^2\lambda_i^2\\
&=\sum_{i=1}^{R-r}(1-\|A_i\|^2)\lambda_i^2
-
\sum_{i=R-r+1}^N\|A_i\|^2\lambda_i^2\\
&\geq
\lambda_{R-r}^2\sum_{i=1}^{R-r}(1-\|A_i\|^2)-
\sum_{i=R-r+1}^N\|A_i\|^2\lambda_i^2-O(T^{1+2\varepsilon}\nu_M^{-2})\\
&\geq \lambda_{R-r}^2 \sum_{i=R-r+1}^{N} \|A_i\|^2
-
\sum_{i=R-r+1}^N\|A_i\|^2\lambda_i^2-O(T^{1+2\varepsilon}\nu_M^{-2})\\
&=-O(T^{1+2\varepsilon}\nu_M^{-2})+\sum_{i=R-r+1}^N (\lambda_{R-r}^2-\lambda_i^2)\|A_i\|^2.
\end{align*}
Here the inequality introducing $\lambda_{R-r}^2$ uses \eqref{tmp:Ai1} together with $\lambda_i\ge\lambda_{R-r}$ for $i\le R-r$, and the next inequality uses \eqref{tmp:Ai2}; the $O(T^{1+2\varepsilon}\nu_M^{-2})$ errors incurred in these two steps are the ones displayed.
By Proposition \ref{prop:loallaw} regularity  condition (2), we know that with high probability,
\[
\lambda_{R-r}(\Sigma^{1/2}_e  E)^2-\lambda_k(\Sigma^{1/2}_e  E)^2\geq c k,\quad k\ge T^{\varepsilon}.
\]
 {Since $R$ is fixed and $T^\varepsilon\to\infty$, for $T$ large enough $T^\varepsilon>R-r$, so this gap inequality covers the entire range $k\ge T^\varepsilon$ used below; for $\phi>1$ and $k>m=T$, where $\lambda_k(\Sigma_e^{1/2}E)=0$ by our convention, the inequality holds trivially for $k\le N\asymp\phi T$ since $\lambda_{R-r}(\Sigma_e^{1/2}E)^2\ge cT$ by Proposition~\ref{prop:loallaw}(1).} Therefore
\[
O(T^{1+2\varepsilon}\nu_M^{-2})\geq \sum_{i\geq T^{\varepsilon}} \|A_{i}\|^2 (\lambda_{R-r}(\Sigma^{1/2}_e  E)^2-\lambda_i^2(\Sigma^{1/2}_e  E))
\geq c\sum_{k\geq T^\varepsilon} \|A_{k}\|^2 k,
\]
which by Cauchy--Schwarz further leads to
\begin{equation}
\label{tmp:CS}
\sum_{k=T^{\varepsilon}+1}^{N} \|A_{k}\|\leq \sqrt{(\sum_{k=T^{\varepsilon}+1}^{N}\|A_{k}\|^2 k)(\sum_{k=T^{\varepsilon}+1}^{N}\frac{1}k)}
= O(T^{2\varepsilon+1/2}\nu^{-1}_M). 
\end{equation}

\textbf{Step 2: delocalization for $\xihat_k$.}
Recall from \eqref{eq:xihat-exp} that
\[
\xihat_k=\Xi_r x_{k}+\sum_{i\in [N]}a_{k,i}u_i+\rho_k.
\]
Therefore,
\[
\eta'\xihat_k=\eta'\Xi_r x_{k}+\sum_{i\leq T^\varepsilon} a_{k,i} \eta'u_i
+\sum_{i\geq T^\varepsilon+1} a_{k,i} \eta'u_i+\eta'\rho_k.
\]
Recall from claim~(iii), proved above, that with high probability $\|x_k\|=O(T^{\varepsilon}\nu_M^{-2})$; hence $|\eta'\Xi_r x_{k}|= O(T^{\varepsilon}\nu_M^{-2})$.
Moreover, by \eqref{eq:rho-bound}, $|\eta'\rho_k|\le\|\rho_k\|\prec\nu_M^{-1}$, precisely the order of the bound being proved.
Meanwhile, by Proposition \ref{prop:loallaw} and $a_{k,i}\leq 1$,
\begin{align*}
\sum_{i\leq T^\varepsilon} a_{k,i} \eta'u_i&
\lesssim  T^{\varepsilon-1/2}\sum_{i\leq T^\varepsilon} |a_{k,i}|\lesssim  T^{2\varepsilon-1/2}.
\end{align*}
Moreover, by \eqref{tmp:CS}
\begin{align*}
\sum_{i\geq T^\varepsilon+1} a_{k,i} \eta'u_i&
\lesssim  T^{\varepsilon-1/2}\sum_{i\geq T^\varepsilon+1} |a_{k,i}|
\lesssim  T^{\varepsilon-1/2}\sum_{i\geq T^\varepsilon+1} \|A_{i}\|=O(T^{3\varepsilon}\nu_M^{-1}).
\end{align*}
Combining the preceding four bounds proves the claim for $\xihat_k$.

\textbf{Step 3: delocalization for $\vhat_k$.} The bound $\|\zeta'\widehat v_k\|\prec\nu_M^{-1}$ on the right side follows by the same transposition argument used for claim~(iii). Replacing the model $X=M+\Sigma_e^{1/2}E$ by its transpose $X'$ exchanges left and right singular vectors, mapping the decomposition $\widehat\xi_k=\Xi_r x_k+\sum_i a_{k,i}u_i+\rho_k$ to $\widehat v_k=V_r x_k+\sum_i a_{k,i}w_i+\rho_k'$, where $\rho_k'$ is the analogue of $\rho_k$ for the transposed model --- it vanishes with high probability when $\phi>1$ and is bounded exactly as in \eqref{eq:rho-bound} when $\phi<1$, the two aspect-ratio regimes exchanging roles under transposition --- and Steps~1--2 carry over verbatim with $\eta\in\R^N$ replaced by $\zeta\in\R^T$ and Lemma~\ref{lem:XiU}'s right-side bound $\|W_k'V_r\|\le T^{\varepsilon-1/2}$ (with high probability) supplying the orthogonality budget.

\subsection{Local laws of noise matrix}
Recall that $X=M+U$ with $U=\Sigma_e^{1/2}E$, where $E$ and $\Sigma_e$ satisfy Assumption~\ref{aspt:D0}, and that $\pi$ denotes the empirical spectral distribution of the eigenvalues of $\Sigma_e$ (Section~\ref{secextra}).

In Proposition~\ref{prop:loallaw} below, claim~3 (anisotropic delocalization of $u_{0,k}$ and $w_{0,k}$) is a direct consequence of Theorem~3.12 and Remark~3.13 of \cite{knowles2017anisotropic} and \cite{alex2014isotropic}. Claims~1 and~2 combine eigenvalue rigidity with the square-root behavior at the regular upper edge of the deformed MP law. The three claims provide, respectively, the spectral lower bound used in Step~2 of the proof of Theorem~\ref{thm:main}(i), the squared-singular-value spacing used to control $\sum_{k\geq T^\varepsilon}\|A_k\|^2$ in claim~(ii), and the entry-level delocalization used throughout (Lemmas~\ref{lem:Wkanis} and~\ref{lem:XiU}).

\begin{proposition}
\label{prop:loallaw} Suppose $N/T\to \phi\in(0,\infty)\backslash\{1\}$.
Under assumption \ref{aspt:D0}, for every fixed $\varepsilon>0$ and when $T$ is sufficiently large, the following statements hold with high probability. In this proposition set $m:=\min\{N,T\}$ and adopt the convention $\lambda_k(\Sigma_e^{1/2}E)=0$ for $m<k\le N$:
\begin{enumerate}
    \item $\lambda_j(\Sigma^{1/2}_e  E)>c_0\sqrt{T}$ for all $j\leq R$.
    \item  $|\lambda_j(\Sigma^{1/2}_e  E)^2-\lambda_k(\Sigma^{1/2}_e  E)^2|\geq c_0 k$ for all $j\leq R$ and $T^\varepsilon<k\le N$.
    \item Let $u_{0,k}$ and $w_{0,k}$ be the $k$-th left and right singular vectors of $U$. 
For any two groups of norm-$1$ vectors $\mathcal{A}\subset \mathbb{R}^{N},\mathcal{B}\subset \mathbb{R}^T$ that are independent of both $E$ and $\Sigma_e^{1/2}$ (but may depend on $M$), suppose their cardinality $|\mathcal{A}|+|\mathcal{B}|\leq N^D$ for some fixed power $D$, then 
\[
\max_{\eta\in \mathcal{A}}|\eta' u_{0,j}|\leq C T^{\varepsilon-1/2},\quad
\max_{\zeta\in \mathcal{B}}|\zeta' w_{0,j}|\leq C T^{\varepsilon-1/2},\quad \forall j\le \min\{T, N\}.
\]  

\end{enumerate}
\end{proposition}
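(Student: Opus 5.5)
The plan is to reduce all three claims to known results for the sample covariance matrix $T^{-1}UU'=T^{-1}\Sigma_e^{1/2}EE'\Sigma_e^{1/2}$ (or its companion $T^{-1}E'\Sigma_eE$) from \cite{knowles2017anisotropic}. That paper works under Assumption~\ref{aspt:D0}: independent standardized entries with all moments finite, which subexponential tails guarantee, and a regular deformed MP law. Its two main inputs are \emph{eigenvalue rigidity} and \emph{anisotropic delocalization}. Writing $\mu_j:=\lambda_j(\Sigma_e^{1/2}E)^2/T$ and $\gamma_j$ for the $j$-th classical location of $\varrho$ (defined by $\int_{\gamma_j}^\infty d\varrho = j/N$ in their normalization), rigidity gives, with high probability and uniformly in $j$,
\[
|\mu_j-\gamma_j|\le T^{\varepsilon}\,\bigl(j\wedge (m+1-j)\bigr)^{-1/3}T^{-2/3}.
\]
This holds in the bulk and near regular edges. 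The condition $\phi\ne1$ keeps the hard edge away from zero, so the smallest nonzero $\gamma_j$ stays bounded below.

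\emph{Claim 1.} Because $R$ is fixed, $\gamma_j$ for $j\le R$ converges to the top edge $a_1$, which is at least $\delta>0$ by regularity of edge $1$. Rigidity then gives $\mu_j\ge a_1/2$ with high probability. Consequently $\lambda_j(\Sigma_e^{1/2}E)\ge c_0\sqrt T$, with $c_0^2=a_1/2$.

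\emph{Claim 2.} Since $j\le R$, claim 1 together with rigidity gives $\mu_j\ge a_1-T^{\varepsilon-2/3}$. The first step is to lower bound $a_1-\gamma_k$ for $T^\varepsilon<k\le N$.

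Near the top edge, the regular-edge condition gives square-root behaviour of the density, $\varrho(x)\asymp\sqrt{a_1-x}$. This yields $a_1-\gamma_k\asymp (k/N)^{2/3}$ for $k\le cN$. For larger $k$, regularity of all edges and bulks, and the fact that spectral gaps have length at least $\delta$, give $a_1-\gamma_k\ge c$.

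Using $(k/N)^{2/3}\ge k/N$, in every case $a_1-\gamma_k\ge c\,k/T$. Rigidity adds only an error of $T^{\varepsilon}k^{-1/3}T^{-2/3}$, which is $o(k/T)$ once $k>T^{\varepsilon'}$ for a suitable choice of constants. Multiplying through by $T$ gives
\[
\lambda_j^2-\lambda_k^2\ge c_0k.
\]
For $k>m$ (the case $\phi>1$), $\lambda_k=0$ by convention, and the bound follows from claim 1 because $k\le N\le CT$. This edge-to-bulk bookkeeping, keeping uniformity over $k$ across several bulk components, is the main technical step. I would handle it by splitting $k$ into an edge window $k\le c N$ and the rest.

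\emph{Claim 3.} Condition on $M$ (through $F$), so that the vectors in $\mathcal A$ and $\mathcal B$ are deterministic and the law of $E$ is unchanged. Theorem~3.12 and Remark~3.13 of \cite{knowles2017anisotropic}, supplemented by \cite{alex2014isotropic}, give the following anisotropic delocalization bound for each deterministic unit $\eta$ and each index $j$ in the spectral domain where the local law applies:
\[
|\langle\eta,u_{0,j}\rangle|^2\le T^{\varepsilon}\,T^{-1}\,\langle\eta,\Sigma_e\eta\rangle\ \text{(up to constants)}.
\]
The same holds for $w_{0,j}$ and $\zeta$. The prefactor is bounded since $\|\Sigma_e\|\le C$, and the regularity assumption together with $\phi\ne1$ covers all $j\le m$.

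The high-probability statement holds with error $T^{-K}$ for arbitrary $K$. A union bound over the at most $N^D$ vectors and the $m$ indices therefore preserves high probability. The constants are uniform in the vectors, so averaging over $F$ gives the unconditional claim.
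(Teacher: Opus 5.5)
Your route is the paper's: eigenvalue rigidity for the deformed MP law plus the square-root behaviour at the regular top edge for Claims 1--2, and the Knowles--Yin anisotropic delocalization with a union bound for Claim 3. However, one step in Claim 2 fails as written.

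You weaken $a_1-\gamma_k\ge c(k/T)^{2/3}$ to $c\,k/T$ \emph{before} comparing with the rigidity error. But $T^{\varepsilon}k^{-1/3}T^{-2/3}\ll k/T$ holds only when $k\gg T^{1/4+3\varepsilon/4}$. So your claimed $o(k/T)$ is false on the range $T^{\varepsilon'}<k\lesssim T^{1/4}$. Likewise, the $j$-side error in $\mu_j\ge a_1-T^{\varepsilon-2/3}$ is not $o(k/T)$ unless $k\gg T^{1/3+\varepsilon}$.

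The fix, which is what the paper does, is to compare both errors with the unweakened edge gap. Let $\delta$ be the rigidity exponent. Then
\begin{itemize}
\item[] $T^{\delta}k^{-1/3}T^{-2/3}=o\bigl((k/T)^{2/3}\bigr)$ if and only if $k\gg T^{\delta}$, and
\item[] $T^{\delta-2/3}=o\bigl((k/T)^{2/3}\bigr)$ if and only if $k\gg T^{3\delta/2}$.
\end{itemize}
Taking $\delta<2\varepsilon/3$ therefore gives $\lambda_j^2-\lambda_k^2\ge c\,T^{1/3}k^{2/3}$ for every $k>T^{\varepsilon}$ in the edge window. Only then should you weaken, using $T^{1/3}k^{2/3}=k\,(T/k)^{1/3}\ge c\,k$, which holds because $k\le N\le CT$.

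Your treatment of the remaining cases of Claim 2 is fine: $\gamma_k$ outside the edge window, where the gap is of order one, and $k>m$, where Claim 1 gives $\lambda_j^2\ge cT\ge ck$. Claims 1 and 3 go through as you describe.
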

\begin{proof}

To obtain the first two claims, let
\[
q_k:=\frac{\lambda_k(\Sigma_e^{1/2}E)^2}{T}
\]
and let $\gamma_{T,k}$ be the corresponding classical location under the deformed MP law $\rho_T\equiv\varrho$ of Appendix~\ref{sec:mp}:
\[
\int_{\gamma_{T,k}}^\infty \rho_T(x)\,dx=\frac{k}{T}-\frac{1}{2T}.
\]
Thus both $q_k$ and $\gamma_{T,k}$ are on the \emph{squared}-singular-value scale. By the regular-upper-edge part of Assumption~\ref{aspt:D0}(ii) and Definition~\ref{def:regularity}, the upper edge $a_1$ is bounded above and away from zero uniformly in $T$, and the density has the regular square-root behavior
\begin{equation}\label{eq:regular-edge-density}
c\sqrt{a_1-x}\ \le\ \rho_T(x)\ \le\ C\sqrt{a_1-x}
\end{equation}
for $x$ in a fixed left neighborhood of $a_1$ intersected with the support. Consequently,
\begin{equation}\label{eq:classical-edge-locations}
c\Bigl(\frac{k}{T}\Bigr)^{2/3}
\ \le\ a_1-\gamma_{T,k}\ \le\
C\Bigl(\frac{k}{T}\Bigr)^{2/3}
\end{equation}
as long as $\gamma_{T,k}$ remains in that neighborhood. If $k$ lies below this upper-edge neighborhood (or in a lower bulk component), regularity and the fixed separation of distinct support components instead give $a_1-\gamma_{T,k}\ge c$.

The eigenvalue-rigidity estimate of Theorem~3.12 of \cite{knowles2017anisotropic}, applied on the squared-singular-value scale, yields for every fixed $\delta>0$
\begin{equation}\label{eq:squared-rigidity}
|q_k-\gamma_{T,k}|
\ \le\
C\,T^\delta\{\min(N_k,N_k^-)\}^{-1/3}T^{-2/3}
\end{equation}
with high probability, where $N_k$ and $N_k^-$ are the forward and reverse ranks of $\gamma_{T,k}$ in its support component. For fixed $R$, \eqref{eq:classical-edge-locations} and \eqref{eq:squared-rigidity} give $q_R\ge a_1-o(1)\ge c$, proving claim~1 after taking square roots.

For claim~2, first consider $T^\varepsilon<k\le m$. Fix the $\varepsilon>0$ in the proposition and choose $\delta\in(0,\varepsilon)$ in \eqref{eq:squared-rigidity}. For fixed $j\leq R$ and $k$ whose classical location remains in the upper-edge neighborhood, \eqref{eq:classical-edge-locations} gives
\[
T(\gamma_{T,j}-\gamma_{T,k})\ge c\,T^{1/3}k^{2/3}.
\]
The rigidity error on the squared-singular-value scale is at most
$C T^{1/3+\delta}k^{-1/3}=o(T^{1/3}k^{2/3})$, because $k>T^\varepsilon$ and $\delta<\varepsilon$. Hence
\[
\lambda_j(\Sigma_e^{1/2}E)^2-\lambda_k(\Sigma_e^{1/2}E)^2
=T(q_j-q_k)\ge c\,T^{1/3}k^{2/3}\ge c\,k.
\]
If $\gamma_{T,k}$ lies below the fixed upper-edge neighborhood or in a lower component, the deterministic location gap is bounded below by a positive constant, and rigidity instead gives $T(q_j-q_k)\ge cT\ge ck$. Finally, if $m<k\le N$ (a range that occurs only when $N>T$), our convention gives $\lambda_k(U)=0$, while claim~1 gives $\lambda_j(U)^2\ge cT\ge ck$ because $k\le N\asymp T$. This proves claim~2.

As for claim~3, (anisotropic delocalization of $u_{0,k}$ and $w_{0,k}$): Remark~3.13 of \cite{knowles2017anisotropic} along with Theorem 3.13 in \cite{alex2014isotropic} indicate that for any unit norm  $\eta_i\in \mathbb{R}^N$ and $\zeta_i\in \mathbb{R}^T$ that are independent of $E$, any fixed $\varepsilon>0$ and $D>0$, the following take place 
\[
\mathbb{P}(|\langle \eta_i, u_{0,k}\rangle|^2+|\langle \zeta_i, w_{0,k}\rangle|^2\geq N^{\varepsilon-1})\leq N^{-D-1}.
\]
Our claim applies a union bound over all $\eta_i\in \mathcal{A}$ and $\zeta_i\in \mathcal{B}$.
\end{proof}

\section{Proofs for Section \ref{secpca}}

 \subsection{Proof of Corollary \ref{coro2.1}}

\begin{lemma}[if $r\geq 1$]\label{lem2H}
Under the assumptions of Theorem~\ref{thm:main} with $r\ge1$, suppose additionally that $T^{\tau}=o(\nu_M^{2})$ for some constant $\tau>0$. Then

(i)  There is  $r\times r$ matrix  $H_r$ so that $
\frac{1}{\sqrt{N}} \|\widehat B_r   - B H_r\|= O_P(
\frac{1}{\nu_M}  )
$  

(ii) $\lambda_{\min}(H_r)\asymp \lambda_{\max}(H_r)=O_P(\nu_{\min}^{-1/2})$

(iii)  $\frac{1}{TN} \|G_T'U'\widehat B_r\|=O_P(T^{-1} \nu_{\min}^{-1/2})$
for any $T\times 1$ vector $G_T$ that is independent of $U$ and that $\|G_T\|= O_P(\sqrt{T})$. 

(iv) Recall $H:=\frac{1}{N}\widehat B'   B $, $R\times r$.  Then $\lambda_{\min}(H)\asymp \lambda_{\max}(H)=O_P(\nu_{\min}^{1/2})$.

\end{lemma}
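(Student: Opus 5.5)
The plan is to work entirely with the leading block $\widehat\Xi_r$, where a spectral gap is available, and then add the extra block using Theorem~\ref{thm:main}(iii).

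\emph{Part (i).} Take $H_r := N^{-1}B'\widehat B_r\cdot(\text{normalization})$. Concretely, set $H_r := H_B\,\Xi_r'\widehat\Xi_r$, using $N^{-1/2}B=\Xi_rH_B^{-1}$ from \eqref{identBF}. Then
\[
N^{-1/2}\bigl(\widehat B_r-BH_r\bigr)=\widehat\Xi_r-\Xi_r\Xi_r'\widehat\Xi_r=(I-P_r)\widehat\Xi_r .
\]
Its norm is the $\sin\Theta$ distance between $\widehat\Xi_r$ and $\Xi_r$. The signal gap is $\lambda_r(M)-\lambda_{r+1}(X)\ge c\nu_M\sqrt T$ by Assumption~\ref{aspt:X}(iii), Weyl, and $\|U\|\asymp\sqrt T$ (Proposition~\ref{prop:loallaw}). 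Wedin's theorem then gives $\|(I-P_r)\widehat\Xi_r\|\le C\|U\|/(\nu_M\sqrt T)=O_P(\nu_M^{-1})$.

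\emph{Parts (ii) and (iv).} For (ii), the singular values of $H_B=S_B^{-1/2}G_B$ are $\asymp\nu_{\min}^{-1/2}$. By (i), $\Xi_r'\widehat\Xi_r$ has singular values $\sqrt{1-O_P(\nu_M^{-2})}\to1$. Lemma~\ref{lem:condprod} then gives $\lambda_{\min}(H_r)\asymp\lambda_{\max}(H_r)\asymp\nu_{\min}^{-1/2}$.

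For (iv), write
\[
H=N^{-1}\widehat B'B=\widehat\Xi_R'\,\Xi_rH_B^{-1}=\begin{pmatrix}\widehat\Xi_r'\Xi_r\\ \widehat\Xi_{-r}'\Xi_r\end{pmatrix}H_B^{-1}.
\]
The bottom block is $O_P(T^\varepsilon\nu_M^{-2})$ by Theorem~\ref{thm:main}(iii), and the top block is near-orthogonal. Hence the $r$ nonzero singular values of $H$ are those of $H_B^{-1}$ up to a $1+o_P(1)$ factor, i.e. $\asymp\nu_{\min}^{1/2}$. Here the $\nu_M^{-2}$ bound is only used qualitatively, so $T^\tau=o(\nu_M^2)$ suffices.

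\emph{Part (iii) (main obstacle).} The difficulty is that $\widehat B_r$ depends on $U$. Decompose
\[
\widehat B_r = BH_r+(\widehat B_r-BH_r).
\]
The first term gives $(NT)^{-1}\|G_T'U'B\|\,\|H_r\|$. Since $G_T$ and $B$ are independent of $U$, and $U=\Sigma_e^{1/2}E$ has independent entries, the second moment gives $\|G_T'U'B\|=O_P(\sqrt T\,\|B\|_F)=O_P(\sqrt{TN}\,\nu_{\min}^{1/2})$. Multiplying by $\|H_r\|\asymp\nu_{\min}^{-1/2}$ yields $O_P((NT)^{-1/2})=O_P(T^{-1})\le O_P(T^{-1}\nu_{\min}^{-1/2})$, since $\nu_{\min}\lesssim1$.

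For the remainder, a naive Cauchy--Schwarz bound $\|G_T'U'\|\,\|\widehat\Xi_r-\Xi_r\Xi_r'\widehat\Xi_r\|\sqrt N$ gives $\sqrt{T}\sqrt N\cdot\nu_M^{-1}\sqrt N$. Using $\|G_T'U'\|=O_P(\sqrt{TN})$ by independence, the term becomes $(NT)^{-1}\sqrt{TN}\sqrt N\nu_M^{-1}\asymp T^{-1/2}\nu_M^{-1}$. Since $\nu_{\min}^{-1/2}\asymp\sqrt N/\nu_M$, this equals $T^{-1}\nu_{\min}^{-1/2}$, exactly the target. So I would try this crude route first; it appears to close without any delicate leave-one-out argument.

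If constants do not line up, the fallback is to expand $(I-P_r)\widehat\Xi_r$ via the eigen-equation, as in \eqref{eq2.1x}, as $\approx U V_r L_r^{-1}$ plus a higher-order term. The leading piece $G_T'U'UV_r$ is then handled by independence of $V_r$ and $G_T$ from $E$.
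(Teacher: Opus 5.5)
Your proposal is correct. Part (iii) is essentially the paper's own argument: the same decomposition $\widehat B_r=BH_r+(\widehat B_r-BH_r)$, a conditional second moment for $B'UG_T$, and the crude bound $\|UG_T\|=O_P(T)$ for the remainder. (A small point: it is $E$, not $U$, that has independent entries. $U$ has independent columns, but $\E[UGG'U'\mid G]=\|G\|^2\Sigma_e$ still holds, so your computation is unaffected.)

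The genuine difference is the choice of rotation in (i)--(ii).

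\emph{The paper's route.} The paper uses the Bai-type rotation $H_r=\frac1T F'X'\widehat B_r\widetilde L_r^{-1}$, read off from the eigen-equation $XX'\widehat B_r=T\widehat B_r\widetilde L_r$. This gives the exact identity $\widehat B_r-BH_r=\frac1T UX'\widehat B_r\widetilde L_r^{-1}$. Part (i) then follows from norm bounds alone, with no $\sin\Theta$ theorem. Part (ii), however, needs a separate two-sided argument: an upper bound from the explicit formula, and a lower bound from perturbing $I_r=N^{-1}\widehat B_r'\widehat B_r=H_r'S_BH_r+o_P(1)$.

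\emph{Your route.} Your $H_r=H_B\Xi_r'\widehat\Xi_r$ is exactly the least-squares coefficient $(B'B)^{-1}B'\widehat B_r$. Hence the residual is $(I-P_r)\widehat\Xi_r$, and (i) is literally the Wedin bound. Part (ii) becomes a one-line consequence of \eqref{identBF}, Lemma~\ref{lem:condprod}, and the cosine identity $\lambda_{\min}(\Xi_r'\widehat\Xi_r)^2=1-\|(I-P_r)\widehat\Xi_r\|^2$, with clean two-sided bounds.

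\emph{Trade-off.} The paper's form exhibits the error explicitly as a product involving $U$. Yours gives sharper structure: the residual is orthogonal to $\mathrm{col}(B)$. Since the later uses of the lemma need only (iii) and (iv), and (iii) uses only the rate in (i) and $\|H_r\|$, either choice suffices.

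\emph{Part (iv).} Your factorization $H=\widehat\Xi_R'\Xi_rH_B^{-1}$ is the one in the proof of Proposition~\ref{prop:H-invert}, to which the paper defers. You can drop Theorem~\ref{thm:main}(iii) there: $\lambda_r(\widehat\Xi_R'\Xi_r)\ge\lambda_r(\widehat\Xi_r'\Xi_r)$ because appending rows cannot shrink $\|\cdot\,x\|$, and $\|\widehat\Xi_R'\Xi_r\|\le 1$.
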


\begin{proof}
Because 
$
  \nu_{\min}^{-1} \asymp  N\nu_M^{-2}.
$ and $T\asymp N$. 
Hence the hypothesis $T^{\tau}=o(\nu_M^{2})$ is equivalent to $T^{\tau-1}=o(\nu_{\min})$.

(i) Let that $\widetilde L_r$ be the $r\times r$ diagonal matrix of top $r$ eigenvalues of $XX'/T$.
By definition, 
$$
  XX' \widehat B_r=T\widehat B_r \widetilde L_r.
$$
With $X= BF'+U$, we can define $H_r:=\frac{1}{T} F'X'\widehat B_r \widetilde L_r^{-1}$. Then 
$$
 \widehat B_r -  B H_r
 =\frac{1}{T}UX'\widehat B_r\widetilde L_r^{-1}.
$$
Classical PCA analysis shows $\widetilde L_r^{-1}= O_P(\nu_{M}^{-2})$, also $\|X\|= O_P(\sqrt{T}\nu_{M})$, $\|U\|= O_P(T^{1/2})$. Thus 
$$
\frac{1}{\sqrt{N}} \|\widehat B_r   - B H_r\|= O_P(\nu_M^{-1} ).
$$

(ii) The desired result is standard analysis   the first $r$ eignvectors,  whose proof appears in many places when factors are strong. We provide a proof below for completeness, which also allows weaker factors.
Recall from (i) that $\|\widetilde L_r^{-1}\|=O_P(\nu_M^{-2})$ and $\|X\|=O_P(\sqrt{T}\nu_M)$. Since $\|F\|=O_P(\sqrt T)$ and $\|\widehat B_r\|=\sqrt N$, the definition of $H_r$ and $\nu_{\min}\asymp \nu_M^2/N$ give
\[
\|H_r\|\leq \frac1T\|F\|\,\|X\|\,\|\widehat B_r\|\,\|\widetilde L_r^{-1}\|
=O_P(\sqrt N\,\nu_M^{-1})=O_P(\nu_{\min}^{-1/2}).
\]
To bound the minimum singular value of $H_r$. Note $\frac{1}{N}\widehat B_r'\widehat B_r=I_r= H_r'\frac{1}{N}B'B H_r+ o_{\mathrm{P}}(1).$ 
Let $x$ denote the eigenvector of $H_r'H_r$ corresponding to its minimum eigenvalue. Then
\begin{eqnarray*}
    \lambda_{\min}(H_r)^2&=&\lambda_{\min}(H_r'H_r)=x'H_r'H_rx\geq \frac{x'H_r'\frac{1}{N}B'BH_rx}{\lambda_{\max}(\frac{1}{N}B'B)}\geq \frac{\lambda_{\min}(H_r'\frac{1}{N}B'BH_r)}{\lambda_{\max}(\frac{1}{N}B'B)}\cr 
    &\geq& \frac{N}{ \nu_M^2} (1-o_P(1))\geq \nu_{\min}^{-1}(1-o_P(1)).
\end{eqnarray*}

(iii)  {We bound the two pieces of the decomposition $\widehat B_r=BH_r+(\widehat B_r-BH_r)$:
\[
\tfrac{1}{NT}\|\widehat B_r' UG_T\|\;\le\;\tfrac{\|H_r\|}{NT}\,\|B'UG_T\|\;+\;\tfrac{1}{NT}\,\|\widehat B_r-BH_r\|\,\|UG_T\|.
\]
For the first piece, we condition on $G_T$, which is independent of $U$. Since $\E[Ugg'U'\mid G_T=g]=\|g\|^2\,\Sigma_e$ for any fixed $g\in\R^T$ (using the independent, mean-zero, unit-variance entries of $E$),
$$
\E[\|B'UG_T\|^2\mid G_T]=\|G_T\|^2\,\tr(B'\Sigma_e B)
\leq C\|G_T\|^2\|B\|_{\mathrm F}^2=O_P(\nu_M^2T),
$$
where the last step uses $\|G_T\|=O_P(\sqrt{T})$ and $\|B\|_{\mathrm F}^2\asymp\nu_M^2$; conditional Chebyshev then gives $\|B'UG_T\|=O_P(\sqrt{T}\,\nu_M)$. The conditional argument requires no moment conditions on the entries of $G_T$, which matters when the lemma is applied with $G_T=F$, whose rows need not be bounded under Assumption~\ref{aspt:X}(i).
So with $\|H_r\|=O_P(\nu_{\min}^{-1/2})$ and   $\nu_M\asymp\sqrt{N\nu_{\min}}$,
$$\tfrac{\|H_r\|}{NT}\,\|B'UG_T\|=O_P(\frac{1}{\sqrt{ NT}}).$$
For the second piece, $\|UG_T\|\le\|U\|\,\|G_T\|= O_P(T)$ and $\|\widehat B_r-BH_r\|=O_P(\sqrt{N}\,\nu_M^{-1})$ from~(i), so
\[
\tfrac{1}{NT}\,\|\widehat B_r-BH_r\|\,\|UG_T\|\;=\;O_P\Bigl(\tfrac{\sqrt{N}\,\nu_M^{-1}\cdot T}{NT}\Bigr)\;= \;O_P(T^{-1}\nu_{\min}^{-1/2}).
\]
Combining the two pieces gives the claim.}

 (iv)   By (iii), because $T^{-1}=o(\nu_{\min})$,
$$
\frac{1}{N} B'\widehat B_r   =\frac{1}{N}S_f^{-1}H_r\widetilde L_r -\frac{1}{TN} S_f^{-1}F'U'\widehat B_r= \frac{1}{N}S_f^{-1}H_r\widetilde L_r  + O_P(T^{-1} \nu_{\min}^{-1/2}).
$$

 This shows   $\lambda_{\min}(\frac{1}{N} B'   \widehat B_r)\asymp  \nu_{\min}^{1/2}$. Moreover, $\lambda_{\max}(H)\le \lambda_1(S_B)^{1/2}\asymp\nu_{\min}^{1/2}$, since $H'=N^{-1}B'\widehat B$ with $\|N^{-1/2}\widehat B\|=1$, while $\lambda_{\min}(H)\ge c_0\,\nu_{\min}^{1/2}$ with probability tending to one by Proposition~\ref{prop:H-invert}, whose proof (Appendix~\ref{secHinvert}) relies only on Theorem~\ref{thm:main} and identity~\eqref{identBF} and is independent of this lemma. Hence $\lambda_{\min}(H)\asymp \lambda_{\max}(H)\asymp \nu_{\min}^{1/2}$ with probability tending to one.

\end{proof}

 \textbf{Proof of Corollary \ref{coro2.1}}

\begin{proof}
  
 First, Lemma \ref{lem2H}, applied columnwise to the $K=O(1)$ columns of $G_T$, shows $\frac{1}{TN} \|G_T'U'\widehat B_r\|=O_P(T^{-1} \nu_{\min}^{-1/2})$.
For the extra block, Theorem~\ref{thm:main}(i)--(ii), applied columnwise to $G_T$, gives
\[
\|\widehat L_{-r}\widehat V_{-r}'G_T\|
\le \|\widehat L_{-r}\|\,\|\widehat V_{-r}'G_T\|
\prec T\nu_M^{-1}.
\]
Moreover, Theorem~\ref{thm:main}(iii), $\|B\|_{\mathrm F}\asymp\nu_M$, and $\|F'G_T\|\le\|F\|\,\|G_T\|_{\mathrm F}=O_P(T)$ give
\[
\|\widehat\Xi_{-r}'MG_T\|
\le\|\widehat\Xi_{-r}'B\|\,\|F'G_T\|
\prec T\nu_M^{-1}.
\]
Since $\widehat B_{-r}=\sqrt N\,\widehat\Xi_{-r}$ and
$\widehat\Xi_{-r}'UG_T=\widehat L_{-r}\widehat V_{-r}'G_T-\widehat\Xi_{-r}'MG_T$, it follows that
\[
\frac{1}{NT}\|\widehat B_{-r}'UG_T\|
\prec \frac{\sqrt N}{NT}\,\frac{T}{\nu_M}
\asymp T^{-1/2}\nu_M^{-1}.
\]
Combining the leading and extra blocks and using $\nu_{\min}\asymp\nu_M^2/N$ yields $\frac{1}{NT}\|\widehat B'UG_T\|\prec T^{-1/2}\nu_M^{-1}$. We prove the factor-side bound directly, without transposing the model. First consider the leading block. Conditionally on $(F,G_N)$, the columns of $U$ are independent, $V_r$ is fixed and orthonormal, and
\[
\E\!\left[\|V_r'U'G_N\|_{\mathrm F}^2\mid F,G_N\right]
=r\,\tr(G_N'\Sigma_eG_N)\le C\|G_N\|_{\mathrm F}^2=O_P(T).
\]
Hence $\|V_r'U'G_N\|=O_P(\sqrt T)$. By Davis--Kahan, for some orthogonal $O_r$,
\[
\|\widehat V_r-V_rO_r\|_{\mathrm F}=O_P(\nu_M^{-1}),
\]
and therefore
\[
\|\widehat V_r'U'G_N\|
\le O_P(\sqrt T)+O_P(\nu_M^{-1})\|U\|\,\|G_N\|
=O_P(\sqrt T+T\nu_M^{-1}).
\]
Since $\|\widehat L_r\|/\sqrt N=O_P(\nu_M)$, $N\asymp T$, and $\nu_M\le C\sqrt T$, the leading block satisfies
\[
\frac{1}{NT\sqrt N}\|\widehat L_r\widehat V_r'U'G_N\|
=O_P(\nu_MT^{-3/2}+T^{-1})
=O_P(T^{-1/2}\nu_M^{-1}).
\]
For the extra block, Theorem~\ref{thm:main}(ii), applied columnwise to $G_N$, and Theorem~\ref{thm:main}(iii) give
\[
\|\widehat\Xi_{-r}'G_N\|\prec\sqrt T\,\nu_M^{-1},
\qquad
\|\widehat V_{-r}'F\|\prec\sqrt T\,\nu_M^{-2}.
\]
Using $\widehat V_{-r}'U'G_N=\widehat L_{-r}\widehat\Xi_{-r}'G_N-\widehat V_{-r}'FB'G_N$, together with $\|\widehat L_{-r}\|=O_P(\sqrt T)$ and $\|B'G_N\|\le\|B\|_{\mathrm F}\|G_N\|=O_P(\nu_M\sqrt T)$, yields
\[
\|\widehat V_{-r}'U'G_N\|\prec T\nu_M^{-1}.
\]
Because $\|\widehat L_{-r}\|/\sqrt N=O_P(1)$,
\[
\frac{1}{NT\sqrt N}\|\widehat L_{-r}\widehat V_{-r}'U'G_N\|
\prec T^{-1}\nu_M^{-1}
\le C T^{-1/2}\nu_M^{-1}.
\]
Combining the leading and extra blocks proves $\frac{1}{NT}\|\widehat F'U'G_N\|\prec T^{-1/2}\nu_M^{-1}$.

 \end{proof}

\subsection{Proof of Theorem \ref{thm:common}}

\begin{proof}
Recall the SVD $M= \Xi_{r}L_{r}V'_{r}$ from Section~\ref{secmodel}. By the Davis--Kahan $\sin\Theta$ theorem, for the SVD of $X$ there exists a rotation $O_{r}$ such that $\|\Vhat_{r}-V_{r}O_{r}\|_{\mathrm{F}}=O_P(\nu^{-1}_M)$. Note that $\widehat M= X \Vhat_{R}\Vhat_{R}'$ and
$M V_rV_r'= M$.
We decompose
\begin{align*}
X \Vhat_{R}\Vhat_{R}'-M&=
X \Vhat_{r}\Vhat_{r}'-M+X \Vhat_{-r}\Vhat_{-r}'\\
&=M(\Vhat_r\Vhat_r'-V_rV_r')
+U \Vhat_r\Vhat_r'+ M\Vhat_{-r}\Vhat_{-r}'+U\Vhat_{-r}\Vhat_{-r}'.
\end{align*}
We bound:  $\|U\Vhat_{r}\Vhat'_{r}\|_{\mathrm{F}}\le\sqrt{r}\,\|U\|= O_P(\sqrt{T})$,
\[
\|M(\Vhat_r\Vhat_r'-V_rV_r')\|_{\mathrm{F}}=O_P(\sqrt{T}),\quad
\|U\Vhat_{-r}\Vhat'_{-r}\|_{\mathrm{F}}\leq  \|\widehat V_{-r}\|_{\mathrm{F}}^2\|U\|=O_P(\sqrt{T})
 \]
 and, writing $M\Vhat_{-r}=X\Vhat_{-r}-U\Vhat_{-r}=\widehat\Xi_{-r}\widehat L_{-r}-U\Vhat_{-r}$, where $\|\widehat L_{-r}\|=\widehat\lambda_{r+1}\le\lambda_1(\Sigma_e^{1/2}E)$ by Theorem~\ref{thm:main}(i) and $\lambda_1(\Sigma_e^{1/2}E)=O_P(\sqrt T)$ under Assumption~\ref{aspt:D0},
$$
\|M\Vhat_{-r}\Vhat_{-r}'\|_{\mathrm{F}}\;\le\;\sqrt{R-r}\,\bigl(\|\widehat L_{-r}\|+\|U\|\bigr)\;=\;O_P(\sqrt{T})
$$
(Theorem~\ref{thm:main}(iii) refines this to $\prec\nu_M^{-1}\sqrt T$, but the crude bound suffices here). Together,
$$
\frac{1}{\sqrt{NT}}\|\widehat M- M\|_{\mathrm{F}}=O_P\Bigl(\frac{\sqrt{T}}{\sqrt{NT}}\Bigr)= O_P(T^{-1/2})
$$
given $T\asymp N$. {The same conclusion holds in the  case $r=0$ ($M=0$), where the first three terms in  $X \Vhat_{R}\Vhat_{R}'-M$ vanish and the remaining term $\|U\widehat V_{-r}\widehat V_{-r}'\|_{\mathrm{F}}= O_P(\sqrt{T})$. }

\end{proof}

\subsection{Proof of Theorem \ref{thm:factor}}

 \begin{proof}
In this theorem we assume $T^{\tau}=o(\nu_M)$ for some constant $\tau>0$; since $\nu_M^2\asymp N\nu_{\min}$ and $N\asymp T$, this implies $T^{\tau-1}=o(\nu_{\min})$, and in particular Lemma~\ref{lem2H} and Corollary~\ref{coro2.1} apply.
   Note   $H'H^+= I_r$.

(i)(ii)  By Lemma \ref{lem2H}, $
\|H\|=    O_P(\nu_{\min}^{1/2}).
$ 
In addition, recall from \eqref{eq2.1} and \eqref{eq2.2} --- the latter valid on the event $\{\lambda_r(H)>0\}$, which has probability tending to one by Proposition~\ref{prop:H-invert} --- that $\widehat F = F H'+ \frac{1}{N}U'\widehat B$ and $\widehat F H^+ = F +\frac{1}{N}U'\widehat B H^+$.

Recalling $T\asymp N$, we have 
\begin{eqnarray*}
\frac{1}{\sqrt{T}}\|\widehat F H^+ - F\|
&=&\frac{1}{\sqrt{T}}\|\frac{1}{N}U'\widehat B H^+\| = \frac{1}{N } \| U\|  O_P(\nu^{-1/2}_{\min})= O_P(T^{-1/2} \nu^{-1/2}_{\min})\cr
\frac{1}{\sqrt{T}}\|\widehat F- FH'\|
&\leq&  \frac{1}{N } \| U\|  =  O_P(T^{-1/2} ).
\end{eqnarray*}

Now set $G_T=F$  in Corollary \ref{coro2.1} to reach
$$
\|\frac{1}{TN} F'U'\widehat B   \|=O_p(  T^{\varepsilon-1}     \nu_{\min}^{-1/2}) .$$
Hence 
\begin{eqnarray*}
\frac{1}{T}\|F'(\widehat F H^+ - F)\| &\leq& \|\frac{1}{TN}F'U'\widehat B  \|O_P(\nu^{-1/2}_{\min})=O_P(T^{\varepsilon-1}     \nu_{\min}^{-1} ) \cr
\frac{1}{T}\|F'(\widehat F- FH')\| &\leq& \|\frac{1}{TN}F'U'\widehat B  \| = O_P( T^{\varepsilon-1}     \nu_{\min}^{-1/2}) .
\end{eqnarray*}
Since $\varepsilon>0$ is arbitrary, and $T^{-1}\nu_{\min}^{-1}\asymp\nu_M^{-2}$, $T^{-1}\nu_{\min}^{-1/2}\asymp T^{-1/2}\nu_M^{-1}$ (using $\nu_{\min}\asymp\nu_M^{2}/N$ and $N\asymp T$), this proves the stochastic-domination bounds in (i) and (ii).

(iii) From (\ref{identBF}), 
  $  \frac{1}{\sqrt{N}}B= \Xi_rH_B^{-1} .
    $ This implies 
    $
  \Xi_r L_r V_r'=  M= BF' =\sqrt{N}\Xi_rH_B^{-1}F', 
    $ yielding
    $$
F'=N^{-1/2}H_B  L_r V_r',\quad F'F=\frac{1}{N}H_B  L_r ^2H_B',\quad (F'F)^{-1}= N H_B^{'-1}  L_r ^{-2}H_B^{-1}.
    $$
Meanwhile, $(\widehat F'\widehat F)^{-1}=   N \widehat L_R^{-2}$, and $H=\frac{1}{N}\widehat B'   B
= \widehat\Xi_R'\Xi_rH_B^{-1}, 
$  yielding
$$
H'\left(\widehat  F'\widehat F\right)^{-1}H= N H_B^{'-1}\Xi_r'\widehat\Xi_R\widehat L_R^{-2} 
\widehat\Xi_R'\Xi_rH_B^{-1}.$$ Also, $\|H_B^{-1}\|\leq \|S_B\|^{1/2}\lesssim\nu_{\min}^{1/2}$.
This implies, for $\nu_M^2\asymp N\nu_{\min}$,  
\begin{eqnarray*}
    \|H'\left(\widehat  F'\widehat F\right)^{-1}H- \left( F' F\right)^{-1}\|
 &\leq& N\| H_B^{-1}\|^2\| \Xi_r'\widehat\Xi_R\widehat L_R^{-2} 
\widehat\Xi_R'\Xi_r-L_r^{-2}\|\cr 
&\lesssim&   \nu_M^2 \|\Xi_r'\Xihat_r \Lhat^{-2}_r\Xihat_r'\Xi_r-L_r^{-2}\|+\nu_M^2 \|\Xi_r'\Xihat_{-r}\Lhat^{-2}_{-r}\Xihat_{-r}'\Xi_r\|\\
&\lesssim& \nu_M^2 \|\Xihat_r \Lhat^{-2}_r\Xihat_r'-\Xi_rL_r^{-2}\Xi'_r\|+\nu_M^2 \|\Xi_r'\Xihat_{-r}\|^2\|\Lhat^{-2}_{-r}\|\\
&=& O_P\bigl(T^{\varepsilon}(T^{-1}\nu_M^{-1}+T^{-1}\nu_M^{-2})\bigr),
\end{eqnarray*}
 where the second last line uses Lemma \ref{lem:condprod}, and the last line first use Davis--Kahan theorem for the $\Xi_r,L_r$ perturbation, and the second item  follows from Theorem \ref{thm:main}. Hence
  $$ H'\left(\frac{1}{T}\widehat  F'\widehat F\right)^{-1} H-\left(\frac{1}{T}F'  F\right)^{-1} = O_P\bigl(T^{\varepsilon}(\nu_M^{-1}+\nu_M^{-2})\bigr)=o_P(1),
  $$
  upon choosing $\varepsilon\le\tau$, so that $T^{\varepsilon}=o(\nu_M)$.

 \end{proof}

\subsection{Proof of Proposition \ref{prop:H-invert}}
\label{secHinvert}

\begin{proof}
Recall $H'=\frac{1}{N} B'\widehat B = \frac{1}{\sqrt{N}}B' \widehat\Xi_R$ since $\widehat B = \sqrt{N}\widehat\Xi_R$. Using identity~(\ref{identBF}), $\frac{1}{\sqrt{N}}B = \Xi_r H_B^{-1}$, hence
\[
H' \;=\; H_B^{-1'}\,\Xi_r'\widehat\Xi_R \;=\; H_B^{-1'}\,[\,\Xi_r'\widehat\Xi_r,\ \Xi_r'\widehat\Xi_{-r}\,].
\]
 {The two blocks contribute on different scales: the leading $r\times r$ block $\Xi_r'\widehat\Xi_r$ is bounded below by Davis--Kahan, while the $r\times(R-r)$ overestimated block $\Xi_r'\widehat\Xi_{-r}$ is asymptotically negligible by Theorem~\ref{thm:main}(iii). } 
 Also $\frac{1}{\sqrt{N}}\|H_B'\|\|B\|= O(1)$ by (\ref{eb.1}). 
  By the Davis--Kahan $\sin\Theta$ theorem combined with the singular-value separation $\lambda_r(X)\ge c\,\nu_M\sqrt T-O_P(\sqrt T)$ versus $\lambda_{r+1}(X)\le\lambda_1(\Sigma_e^{1/2}E)=O_P(\sqrt T)$ (Assumption~\ref{aspt:X}(iii) with Weyl's inequality, and Theorem~\ref{thm:main}(i)), $\lambda_r(\Xi_r'\widehat\Xi_r)\geq 1-O_P(\nu_M^{-1})\geq \tfrac12$ with probability tending to one. Moreover, since the columns of $\widehat\Xi_R=[\widehat\Xi_r,\widehat\Xi_{-r}]$ are orthonormal, for every unit vector $x\in\R^r$,
 $$
 \|\widehat\Xi_{-r}'\,\Xi_r x\|^2\;=\;\|\widehat\Xi_R'\,\Xi_r x\|^2-\|\widehat\Xi_r'\,\Xi_r x\|^2\;\le\; 1-\lambda_r(\Xi_r'\widehat\Xi_r)^2\;=\;O_P(\nu_M^{-1}),
 $$
 so $\|\Xi_r'\widehat\Xi_{-r}\|=O_P(\nu_M^{-1/2})=o_P(1)$; Theorem~\ref{thm:main}(iii) refines this to $\prec\nu_M^{-2}$, but the crude bound suffices here. Together,
\[
\lambda_r(\Xi_r'\widehat\Xi_R)\;\geq\;\lambda_r(\Xi_r'\widehat\Xi_r)-\|\Xi_r'\widehat\Xi_{-r}\|\;\geq\;\tfrac14
\]
with probability tending to one. Since Assumption~\ref{aspt:X}(iii) implies that the singular values of $H_B^{-1}$ are of order $\nu_{\min}^{1/2}$, it follows that
\[
\lambda_r(H')\;\geq\; \lambda_r(H_B^{-1'})\lambda_r(\Xi_r'\widehat\Xi_R)\;\geq\; c_0\nu_{\min}^{1/2}
\]
for a constant $c_0>0$ depending only on $(c,C,\phi)$ in Assumptions~\ref{aspt:D0}--\ref{aspt:X}, with probability $1-o(1)$. Finally, $\|H^+\|=1/\lambda_r(H)\leq c_0^{-1}\nu_{\min}^{-1/2}=O_P(\nu_{\min}^{-1/2})$, which proves the claim.
\end{proof}

\section{Proofs for Section \ref{secapp}}\label{sec:proofsecapp}

 \textbf{Proof of Theorem \ref{thinf}}

\begin{proof}
The estimator is the just-identified IV slope $\widehat\beta=(\widehat\varepsilon_z'\widehat\varepsilon_g)^{-1}\widehat\varepsilon_z'\widehat\varepsilon_y$ on the residualization of $(Y,G,Z)$ on $[1_T,\widehat F]$. By the Frisch--Waugh--Lovell identity,
\[
\widehat\varepsilon_a \;=\; (I-P_{[1_T,\widehat F]})A \;=\; (I-P_{\widehat F^c})\widetilde A,\qquad A\in\{Y,G,Z\},
\]
where $\widetilde A:=A-\bar a\,1_T$ is the sample-demeaned regressor and $\widehat F^c:=(I-P_{1_T})\widehat F$ is the column-demeaned PCA factor estimator (with the orthonormal basis taken via QR). Substituting the model~\eqref{eq4.1} kills the intercepts $\mu_y,\mu_g,\mu_z$ inside the demeaning,
\[
\widetilde Y \;=\; F^c\alpha_y + \widetilde\varepsilon_y,\qquad
\widetilde G \;=\; F^c\alpha_g + \widetilde\varepsilon_g,\qquad
\widetilde Z \;=\; F^c\alpha_z + \widetilde\varepsilon_z,
\]
where $F^c:=(I-P_{1_T})F$, $\widetilde\varepsilon_a:=\varepsilon_a-\bar\varepsilon_a\,1_T$ for $a\in\{y,g,z\}$, $\alpha_y=\beta\alpha_g+\rho$, and $\varepsilon_y=\beta\varepsilon_g+\eta$. 

Let $M_1:=I-P_{1_T}$. Assumption~\ref{ass4.1}(iv) gives $T^{-1}1_T'F=O_P(T^{-1/2})$, and Theorem~\ref{thm:factor}(i) gives $\|\widehat F-FH'\|=O_P(1)$; hence $T^{-1}1_T'\widehat F=O_P(T^{-1/2})$. It follows that both demeaning perturbations $F^c-F$ and $\widehat F^c-\widehat F$ have operator norm $O_P(1)$. Moreover,
\[
\frac1T F^{c\,\prime}F^c=S_f-\bar f\bar f'=S_f+o_P(1),
\]
so $F^c$ retains rank $r$ and a well-conditioned empirical covariance with probability tending to one.

For precision, let $V_r^c$ be the orthonormal basis obtained by applying QR to $M_1V_r$, let $\widehat V_r^c$ be the QR basis of $M_1\widehat V_r$, and let $\widehat V_{-r}^c$ be an orthonormal basis of $(I-P_{\widehat V_r^c})M_1\widehat V_{-r}$. Then
\[
P_{F^c}=V_r^cV_r^{c\,\prime},\qquad
P_{\widehat F^c}
=\widehat V_r^c\widehat V_r^{c\,\prime}
+\widehat V_{-r}^c\widehat V_{-r}^{c\,\prime}.
\]
These are QR bases of the demeaned fitted spaces, not singular vectors of a separately re-estimated demeaned panel. To transfer the previously established bounds, write $q:=T^{-1/2}1_T$. Assumption~\ref{ass4.1}(iv), Davis--Kahan, and Theorem~\ref{thm:main}(ii) give
\[
\|q'V_r\|=O_P(T^{-1/2}),\qquad
\|q'\widehat V_r\|=O_P(T^{-1/2}+\nu_M^{-1}),\qquad
\|q'\widehat V_{-r}\|\prec\nu_M^{-1}.
\]
Elementary fixed-dimensional QR perturbation then shows that the leading-subspace Davis--Kahan bound, the extra-vector incoherence bound, and the near-orthogonality bound of Theorem~\ref{thm:main}(iii) continue to hold for $(V_r^c,\widehat V_r^c,\widehat V_{-r}^c)$. For the last assertion, note in particular that $F^{c\,\prime}M_1=F^{c\,\prime}$ and that the coefficient introduced by orthogonalizing $M_1\widehat V_{-r}$ against $M_1\widehat V_r$ is
\[
\widehat V_r'M_1\widehat V_{-r}
=-(q'\widehat V_r)'(q'\widehat V_{-r})\prec\nu_M^{-2},
\]
up to the harmless $T^{-1/2}\nu_M^{-1}$ term. Below we use these centered QR bases throughout.


\smallskip\textbf{Step 1: residual approximation.} For each $a\in\{y,g,z\}$,
\begin{equation}\label{eqd.0}
    \widetilde\varepsilon_a-\widehat\varepsilon_a
=(P_{\widehat F^c}-P_{F^c})\widetilde A+P_{F^c}\widetilde\varepsilon_a
=(\widehat V_r^c\widehat V_r^{c\,\prime}-V_r^cV_r^{c\,\prime})\widetilde A
+\widehat V_{-r}^c\widehat V_{-r}^{c\,\prime}\widetilde A+P_{F^c}\widetilde\varepsilon_a.
\end{equation}

Since the left-hand side of above depends on $\widehat V_r^c,V_r^c$ only through the projectors, and
the sin-theta theorem gives $\|\widehat V_r^c-V_r^c\|= O_P(\nu_M^{-1})$, so
\begin{equation}\label{eqmainrA}
\frac{1}{\sqrt T}\|(\widehat V_r^c\widehat V_r^{c\,\prime}-V_r^cV_r^{c\,\prime})\widetilde A\|_{\mathrm F}= O_P(\nu_M^{-1}).
\end{equation}
Write $\widetilde A=F^c\alpha_a+\widetilde\varepsilon_a$. 
{Meanwhile, by Theorem~\ref{thm:main}(iii) for the first term, and using the i.i.d.\ structure of Assumption~\ref{ass4.1}(i) and that $\widetilde\varepsilon_a$ is independent of $X$ to obtain $\| V_{r}^cV_{r}^{c\,\prime}\widetilde\varepsilon_a\| +\| \widehat V_{-r}^c\widehat V_{-r}^{c\,\prime}\widetilde\varepsilon_a\|= O_P(1)$}.
\begin{eqnarray}\label{equvvA}
\frac{1}{\sqrt T}\|\widehat V_{-r}^c\widehat V_{-r}^{c\,\prime}\widetilde A\|
&\leq& \frac{1}{\sqrt T}\|\widehat V_{-r}^c\widehat V_{-r}^{c\,\prime}F^c\alpha_a\|+\frac{1}{\sqrt T}\|\widehat V_{-r}^c\widehat V_{-r}^{c\,\prime}\widetilde\varepsilon_a\|\cr
&\leq& O_P(\frac{1}{\sqrt T})\|\widehat V_{-r}^{c\,\prime}F^c\| +O_P(T^{-1/2})\cr
&=& O_P\bigl(T^{\varepsilon}(\nu_M^{-2}+T^{-1/2})\bigr)\quad\text{for every }\varepsilon>0,
\end{eqnarray}
where $\|\widehat V_{-r}^{c\,\prime}F^c\|=O_P(T^{\varepsilon}\sqrt{T}\,\nu_M^{-2})$ by Theorem~\ref{thm:main}(iii).
Also $\frac{1}{\sqrt T}\|P_{F^c}\widetilde\varepsilon_a\|= O_P(T^{-1/2})$. Hence, for every $\varepsilon>0$ and $a\in\{y,g,z\}$,
$\frac{1}{\sqrt T}\|\widetilde\varepsilon_a-\widehat\varepsilon_a\|= O_P(\nu_M^{-1})+O_P\bigl(T^{\varepsilon}(\nu_M^{-2}+T^{-1/2})\bigr)$. Then, since $\sqrt T=o(\nu_M^{2})$ and $\nu_M\gg T^{1/4}$ --- both implied by Assumption~\ref{ass4.1}(ii) ---
$$\frac{1}{\sqrt T}\|\widetilde\varepsilon_a-\widehat\varepsilon_a\|^{2}= O_P(\sqrt T\,\nu_M^{-2})+O_P\bigl(T^{2\varepsilon}(\sqrt T\,\nu_M^{-4}+T^{-1/2})\bigr)=o_P(1),$$
upon choosing $\varepsilon<1/4$.

\smallskip\textbf{Step 2: cross-product approximation.} We show $\frac{1}{\sqrt T}\widetilde\varepsilon_a'(\widetilde\varepsilon_b-\widehat\varepsilon_b)=o_P(1)$ for every $a,b\in\{y,g,z\}$. Decompose
\begin{eqnarray*}
\frac{1}{\sqrt T}\widetilde\varepsilon_a'(\widetilde\varepsilon_b-\widehat\varepsilon_b)&=&(I)+(II)\cr
(I)&=&\frac{1}{\sqrt T}\widetilde\varepsilon_a'(\widehat V_r^c\widehat V_r^{c\,\prime}-V_r^cV_r^{c\,\prime})F^c\alpha_b\cr
(II)&=&\frac{1}{\sqrt T}\widetilde\varepsilon_a'\widehat V_r^c\widehat V_r^{c\,\prime}\widetilde\varepsilon_b-\frac{1}{\sqrt T}\widetilde\varepsilon_a'V_r^cV_r^{c\,\prime}\widetilde\varepsilon_b+\frac{1}{\sqrt T}\widetilde\varepsilon_a'\widehat V_{-r}^c\widehat V_{-r}^{c\,\prime}\widetilde B+\frac{1}{\sqrt T}\widetilde\varepsilon_a'P_{F^c}\widetilde\varepsilon_b
\end{eqnarray*}
where $\widetilde B\in \{\widetilde Y, \widetilde G, \widetilde Z\}$.  
Let $J:=\frac{1}{\sqrt T}(\widehat V_r^c\widehat V_r^{c\,\prime}-V_r^cV_r^{c\,\prime})F^c\alpha_b$. By~\eqref{eqmainrA}, $\|J\|=O_P(\nu_M^{-1})$, so
$
\E[(I)^2\mid F,U]=J'\E[\widetilde\varepsilon_a\widetilde\varepsilon_a'\mid F,U]J\le C\|J\|^{2}=O_P(\nu_M^{-2}),
$
hence $(I)=o_P(1)$. For $(II)$, using $(\widehat V_r^c\widehat V_r^{c\,\prime})^{2}=\widehat V_r^c\widehat V_r^{c\,\prime}$ and~\eqref{equvvA},
\[
\Bigl|\frac{1}{\sqrt T}\widetilde\varepsilon_a'\widehat V_{-r}^c\widehat V_{-r}^{c\,\prime}\widetilde B\Bigr|\le\Bigl\|\frac{1}{\sqrt T}\widetilde\varepsilon_a'\widehat V_{-r}^c\widehat V_{-r}^{c\,\prime}\Bigr\|\,\|\widehat V_{-r}^c\widehat V_{-r}^{c\,\prime}\widetilde B\|\prec T^{-1/2}(1+T^{1/2}\nu_M^{-2})=o_P(1).
\]
For the remaining three projection terms, conditional on $(F,U)$ each $P_A$, $A\in\{\widehat V_r^c,V_r^c,F^c\}$, is a projection of fixed rank. Independence across $t$, the bounded fourth moments implied by Assumption~\ref{ass4.1}(iii), and the standard second-moment formula for a fixed-rank bilinear form give
\[
\E\!\left[\left.
\left|\frac{1}{\sqrt T}\widetilde\varepsilon_a'P_A\widetilde\varepsilon_b\right|^2
\right|F,U\right]\le \frac{C}{T}\{\operatorname{rank}(A)+\operatorname{rank}(A)^2\}
=O(T^{-1}).
\]
Thus each projection term is $O_P(T^{-1/2})$, so $(II)=o_P(1)$. Combining,
\begin{equation}\label{eq:crossiv}
\frac{1}{\sqrt T}\widehat\varepsilon_a'\widehat\varepsilon_b=\frac{1}{\sqrt T}\widetilde\varepsilon_a'\widetilde\varepsilon_b+o_P(1)=\frac{1}{\sqrt T}\varepsilon_a'\varepsilon_b+o_P(1),\qquad a,b\in\{y,g,z\},
\end{equation}
where the last equality uses the $O_P(T^{-1/2})$ rank-one correction noted in the preamble.

\smallskip\textbf{Step 3: asymptotic distribution.} Since $\varepsilon_y=\beta\varepsilon_g+\eta$, applying~\eqref{eq:crossiv} with $(a,b)=(z,y)$ and $(a,b)=(z,g)$,
\[
\frac{1}{\sqrt T}\widehat\varepsilon_z'\widehat\varepsilon_y-\beta\,\frac{1}{\sqrt T}\widehat\varepsilon_z'\widehat\varepsilon_g
=\frac{1}{\sqrt T}\sum_{t=1}^T\varepsilon_{z,t}\eta_t+o_P(1),
\]
and $T^{-1}\widehat\varepsilon_z'\widehat\varepsilon_g=T^{-1}\varepsilon_z'\varepsilon_g+o_P(1)\to_p\gamma$ by the law of large numbers and the relevance condition $\gamma=\E[\varepsilon_{g,t}\varepsilon_{z,t}]\ne 0$ in Assumption~\ref{ass4.1}(i). Hence
\begin{equation}\label{eq:lin-iv}
\sqrt T(\widehat\beta-\beta)=\gamma^{-1}\,\frac{1}{\sqrt T}\sum_{t=1}^T\varepsilon_{z,t}\eta_t+o_P(1).
\end{equation}
By the exclusion restriction $\E[\eta_t\varepsilon_{z,t}]=0$ in Assumption~\ref{ass4.1}(i), the i.i.d.\ score $\varepsilon_{z,t}\eta_t$ has mean zero, and by Assumption~\ref{ass4.1}(iii) and H\"older $\E[\varepsilon_{z,t}^{2}\eta_t^{2}]\le(\E\varepsilon_{z,t}^{4})^{1/2}(\E\eta_t^{4})^{1/2}<\infty$. Lyapunov's central limit theorem (using the eighth moment in (iii) for the Lyapunov ratio) yields
\[
T^{-1/2}\sum_{t=1}^T\varepsilon_{z,t}\eta_t\to^d\mathcal N\bigl(0,\E[\varepsilon_{z,t}^{2}\eta_t^{2}]\bigr),
\]
so $\sqrt T\sigma^{-1}(\widehat\beta-\beta)\to^d\mathcal N(0,1)$, with $\sigma^{2}=\gamma^{-2}\,\E[\varepsilon_{z,t}^{2}\eta_t^{2}]>0$ as defined in Theorem~\ref{thinf}.

\smallskip\textbf{Step 4: Entrywise bound.} For the consistency of the HC$_0$ variance estimator, it will be useful to prove the following result. 
For a ``tall" matrix $v$ whose dimension is $T\times l$ with $l=O(1)$ with $v_t'$ as the $t$ th row,   define $\|v\|_{\infty}=\max_{t\leq T}\|v_t\|$. 
We   prove $\|\widehat\varepsilon_{a}-\widetilde \varepsilon_{a}\|_{\infty} = o_P(1)$.   From (\ref{eqd.0}), 
$$
\|\widetilde\varepsilon_a-\widehat\varepsilon_a\|_{\infty}
\leq  \|(\widehat V_r^c\widehat V_r^{c\,\prime}-V_r^cV_r^{c\,\prime})\widetilde A\|_{\infty}
+\|\widehat V_{-r}^c\widehat V_{-r}^{c\,\prime}\widetilde A\|_{\infty}+\|P_{F^c}\widetilde\varepsilon_a\|_{\infty}.
$$
By Assumption~\ref{aspt:X}, $\|F^c\|_{\infty}\leq \max_{t\le T}\|f_t\|+\|\bar f\|=O_P(\log T)$. Since $T^{-1}F^{c\,\prime}F^c$ is well conditioned by the argument above, an orthonormal basis can be written as $V_r^c=T^{-1/2}F^cQ_c$ with $\|Q_c\|=O_P(1)$, and therefore
$\|V_r^c\|_{\infty}=O_P(T^{-1/2}\log T)$.  Also, the within-signal eigengap in Assumption~\ref{aspt:X}(iii) permits the entrywise control of the first $r$ eigenvectors (e.g., Theorem 1 of \cite{fan2021recent}; in their notation $\eta_N=O_P(\frac{N}{\sqrt{T}}),$ $c_N=\sqrt{\frac{\log N}{T}}$, $g_N=\nu_M\sqrt{T}$, $\|S\|=O(1)$, $\|S\xi_d\|_\infty\leq \|S\|_1\|\xi_d\|_{\infty}\leq O_P(\nu_M^{-1})\|B\|_{\infty}=O_P(\nu_M^{-1})$). After the same Procrustes alignment used above,
$$
\|\widehat V_r^c-V_r^c\|_{\infty}
= O_P\left(\Bigl(\nu_M^{-2}+\sqrt{\frac{\log N}{T}}\nu_M^{-1}\Bigr)\log T\right).
$$ The factor $\log T$ enters through the incoherence level $\|V_r^c\|_{\infty}=O_P(T^{-1/2}\log T)$ of the true right singular vectors. Then
\begin{eqnarray*}
  &&  \|(\widehat V_r^c\widehat V_r^{c\,\prime}-V_r^cV_r^{c\,\prime})\widetilde A\|_{\infty}\leq \|\widehat V_r^c-V_r^c\|_{\infty}\|\widehat V_r^{c'}\widetilde A\| +\|\widehat V_r^c\|_{\infty} \|(\widehat V_r^c-V_r^c)'\widetilde A\|\cr 
    &\leq&  \|\widehat V_r^c-V_r^c\|_{\infty}O_P(\sqrt{T})
    + (\|\widehat V_r^c-V_r^c\|_{\infty}+\|V_r^c\|_{\infty})\|\widehat V_r^c-V_r^c\| O_P(\sqrt{T})\cr 
    &=& O_P\bigl((\nu_M^{-2}\sqrt{T}+ \sqrt{\log N}\, \nu_M^{-1})\log T\bigr)=o_P(1).
\end{eqnarray*}
Here the second line uses $\|V_r^c\|_{\infty}\|\widehat V_r^c-V_r^c\|O_P(\sqrt{T})=O_P(T^{-1/2}\log T\cdot\nu_M^{-1}\cdot\sqrt{T})=O_P(\nu_M^{-1}\log T)$, which is absorbed by the second term, and the final $o_P(1)$ follows from the rate condition $\sqrt{T}\log T=o(\nu_M^{2})$ in Assumption~\ref{ass4.1}(ii) together with $\nu_M\gg T^{1/4}\gg\sqrt{\log N}\,\log T$, the first relation as already noted in Step~1.

Next, by the incoherency of overestimated eigenvectors (Theorem~\ref{thm:main}(ii)),
$$
\|\widehat V_{-r}^c\widehat V_{-r}^{c\,\prime}\widetilde A\|_{\infty}
\leq \|\widehat V_{-r}^c\|_{\infty}\|\widehat V_{-r}^{c\,\prime}\widetilde A\|\prec \nu_M^{-1}(1+\sqrt{T}\nu_M^{-2})=o_P(1).
$$
Finally, $\|P_{F^c}\widetilde\varepsilon_a\|_{\infty}\leq O_P(T^{-1})\|F^c\|_{\infty}\|F^{c'}\widetilde\varepsilon_a\|=O_P(T^{-1/2}\log T)=o_P(1)$, using again $\|F^c\|_{\infty}=O_P(\log T)$.

This proves $\|\widetilde\varepsilon_a-\widehat\varepsilon_a\|_{\infty}=o_P(1)$.

\smallskip\textbf{Step 5: HC$_0$ sandwich consistency.} It remains to show that the Eicker--White (HC$_0$) estimator $\widehat\sigma^{2}=(\widehat\varepsilon_z'\widehat\varepsilon_g/T)^{-2}\,T^{-1}\sum_t\widehat\varepsilon_{z,t}^{2}\widehat\eta_t^{2}$ defined in Theorem~\ref{thinf} satisfies $\widehat\sigma^{2}=\sigma^{2}+o_P(1)$.  The outer factor satisfies $\widehat\varepsilon_z'\widehat\varepsilon_g/T\to_p\gamma$ by Step~3. For the meat, write $\widetilde\eta_t:=\widetilde\varepsilon_{y,t}-\beta\widetilde\varepsilon_{g,t}$ so that $\widehat\eta_t=\widehat\varepsilon_{y,t}-\widehat\beta\widehat\varepsilon_{g,t}=\widetilde\eta_t+(\widehat\varepsilon_{y,t}-\widetilde\varepsilon_{y,t})-\widehat\beta(\widehat\varepsilon_{g,t}-\widetilde\varepsilon_{g,t})+(\beta-\widehat\beta)\widetilde\varepsilon_{g,t}$.
Note that by Assumption \ref{ass4.1} (iii), $\max_{t\leq T}|\varepsilon_{g,t}|=O_p(T^{1/8})$ while $|\beta-\widehat{\beta}|=O_p(T^{-1/2})$.
The $\|\cdot\|_\infty$ consistency in 
Step 4 implies $\|\widehat\eta -\widetilde\eta\|_\infty=o_P(1)$. Also, 
\[
T^{-1}\sum_t\widehat\varepsilon_{z,t}^{2}\widehat\eta_t^{2}=T^{-1}\sum_t\widetilde\varepsilon_{z,t}^{2}\widetilde\eta_t^{2}+R_T,
\]
where $R_T$ collects cross-product terms of the form $T^{-1}\sum_t(\widehat\varepsilon_{z,t}-\widetilde\varepsilon_{z,t})\widehat\varepsilon_{z,t}\widehat\eta_t^{2}$, $T^{-1}\sum_t\widetilde\varepsilon_{z,t}^{2}(\widehat\eta_t-\widetilde\eta_t)\widehat\eta_t$, etc., together with the $(\widehat\beta-\beta)$ contribution which is $O_P(T^{-1/2})$ by Step~3 and is multiplied by $T^{-1}\sum_t\widetilde\varepsilon_{z,t}^{2}|\widetilde\varepsilon_{g,t}|=O_P(1)$. By Cauchy--Schwarz and Step~1,
\[
|R_T|\le\Bigl(T^{-1}\sum_t(\widehat\varepsilon_{z,t}-\widetilde\varepsilon_{z,t})^{2}\Bigr)^{1/2}\Bigl(T^{-1}\sum_t\widehat\varepsilon_{z,t}^{2}\widehat\eta_t^{4}\Bigr)^{1/2}+(\text{symmetric terms})+o_P(1).
\]
The first factor  $T^{-1}\sum_t(\widehat\varepsilon_{z,t}-\widetilde\varepsilon_{z,t})^{2}=o_P(1)$ by Step~1. For the second factor, by the elementary inequality $\widehat\varepsilon_{z,t}^{2}\le 2\varepsilon_{z,t}^{2}+2(\widehat\varepsilon_{z,t}-\varepsilon_{z,t})^{2}$ and $\widehat\eta_t^{4}\le 8\eta_t^{4}+8(\widehat\eta_t-\eta_t)^{4}$,
\begin{align*}
T^{-1}\sum_t\widehat\varepsilon_{z,t}^{2}\widehat\eta_t^{4}\;&\le\;C\,T^{-1}\sum_t\varepsilon_{z,t}^{2}\eta_t^{4}+C\,T^{-1}\sum_t\varepsilon_{z,t}^{2}(\widehat\eta_t-\eta_t)^{4}\\
&\quad+C\,T^{-1}\sum_t(\widehat\varepsilon_{z,t}-\varepsilon_{z,t})^{2}\eta_t^{4}+C\,T^{-1}\sum_t(\widehat\varepsilon_{z,t}-\varepsilon_{z,t})^{2}(\widehat\eta_t-\eta_t)^{4}.
\end{align*}
The first term is $O_P(1)$ by the law of large numbers and Assumption~\ref{ass4.1}(iii) ($\E[\varepsilon_{z,t}^{2}\eta_t^{4}]\le(\E\varepsilon_{z,t}^{4})^{1/2}(\E\eta_t^{8})^{1/2}<\infty$ by Cauchy--Schwarz). All the other terms are $o_P(1)$ by the $\|\cdot\|_\infty$-consistency of $\widehat\epsilon_a,\widehat\eta$.  Hence $T^{-1}\sum_t\widehat\varepsilon_{z,t}^{2}\widehat\eta_t^{4}=O_P(1)$. Thus $R_T=o_P(1)$, and using $T^{-1}\sum_t\widetilde\varepsilon_{z,t}^{2}\widetilde\eta_t^{2}=T^{-1}\sum_t\varepsilon_{z,t}^{2}\eta_t^{2}+O_P(T^{-1/2})$,
\[
T^{-1}\sum_t\widehat\varepsilon_{z,t}^{2}\widehat\eta_t^{2}=\E[\varepsilon_{z,t}^{2}\eta_t^{2}]+o_P(1).
\]
Combining, $\widehat\sigma^{2}=\sigma^{2}+o_P(1)$, and hence $\sqrt T\widehat\sigma^{-1}(\widehat\beta-\beta)\to^d\mathcal N(0,1)$, as claimed.

\smallskip\textbf{OLS as the special case $z_t=g_t$.} Setting $z_t=g_t$ throughout collapses $\varepsilon_z\equiv\varepsilon_g$ and $\widehat\varepsilon_z\equiv\widehat\varepsilon_g$, so $\widehat\beta=(\widehat\varepsilon_g'\widehat\varepsilon_g)^{-1}\widehat\varepsilon_g'\widehat\varepsilon_y$ is the partialled-out OLS estimator; the corresponding specializations of $\gamma$, the exclusion restriction, and $\sigma^2$, recorded after Assumption~\ref{ass4.1} and in Theorem~\ref{thinf}, follow by direct substitution.

\smallskip\textbf{$r=0$ as a special case.} When $r=0$,
$
\widetilde\varepsilon_a-\widehat\varepsilon_a
= P_{\widehat F^c} \widetilde A 
=\widehat V_{-r}^c\widehat V_{-r}^{c\,\prime}\widetilde \varepsilon_a. 
$
\begin{eqnarray} 
 \frac{1}{\sqrt T}\|\widehat V_{-r}^c\widehat V_{-r}^{c\,\prime}\widetilde \varepsilon_a\|
&=& O_P(T^{-1/2}).
\end{eqnarray}
So $\frac{1}{\sqrt T}\|\widetilde\varepsilon_a-\widehat\varepsilon_a\|^{2} =o_P(1).$ Meanwhile
$
\frac{1}{\sqrt T}\widetilde\varepsilon_a'(\widetilde\varepsilon_b-\widehat\varepsilon_b)=\frac{1}{\sqrt T}\widetilde\varepsilon_a' \widehat V_{-r}^c\widehat V_{-r}^{c\,\prime}\widetilde \varepsilon_b= O_P(T^{-1/2}).
$ This implies 
$$
\frac{1}{\sqrt T}\widehat\varepsilon_a'\widehat\varepsilon_b= \frac{1}{\sqrt T}\varepsilon_a'\varepsilon_b+o_P(1),\qquad a,b\in\{y,g,z\}.
$$
Moreover, Proposition~\ref{prop:loallaw}(3), applied jointly to the $T$ standard basis vectors and $\zeta=T^{-1/2}1_T$, followed by the same fixed-dimensional demeaning and QR argument used in the preamble, gives $\|\widehat V_{-r}^c\|_\infty\prec T^{-1/2}$. Hence
\[
\|\widetilde\varepsilon_a-\widehat\varepsilon_a\|_\infty
\le \|\widehat V_{-r}^c\|_\infty
\|\widehat V_{-r}^{c\,\prime}\widetilde\varepsilon_a\|=o_P(1).
\]
The asymptotic-normality and sandwich-consistency arguments in Steps~3--5 therefore apply unchanged.

\end{proof}

\newpage

\bibliographystyle{chicago}
\bibliography{liao_bib}

\end{document}